\documentclass[10.9pt,a4paper]{amsart}
\usepackage{a4wide}
\usepackage[utf8]{inputenc}
\usepackage[english]{babel}

\usepackage{amsfonts,amssymb,amsmath}

\usepackage[nobysame,alphabetic, initials]{amsrefs}

\usepackage{comment}

\usepackage{graphicx}

\usepackage[dvipsnames]{xcolor}
\usepackage[colorlinks,
    linkcolor={blue!50!black},
    citecolor={blue!50!black},
    urlcolor={black!80!black}]{hyperref}
 \usepackage{cleveref}

 \usepackage{enumerate}
\usepackage{tikz}
\usepackage[all]{xy}
\usetikzlibrary{calc, matrix, arrows, cd, decorations.markings, knots}
\usetikzlibrary{decorations.pathmorphing}
\usetikzlibrary{decorations.pathreplacing}
\usepackage{thmtools}
\usepackage{thm-restate}
\usepackage{amsthm}

\newcommand{\customgenericname}{} 

\newtheorem{innercustomgeneric}{\customgenericname}

\newcommand{\newcustomtheorem}[2]{%
  \newenvironment{#1}[1]{%
    \renewcommand{\customgenericname}{#2}%
    \renewcommand{\theinnercustomgeneric}{##1}%
    \begin{innercustomgeneric}%
  }{%
    \end{innercustomgeneric}%
  }%
}

\newcustomtheorem{customthm}{Theorem}
\newcustomtheorem{customlemma}{Lemma}

\declaretheorem[name=Theorem,numberwithin=section]{thm}
\newtheorem{theorem}{Theorem}[section]

\newtheorem{corollary}[theorem]{Corollary}
\newtheorem{lemma}[theorem]{Lemma}
\newtheorem{proposition}[theorem]{Proposition}

\theoremstyle{definition}
\newtheorem{definition}[theorem]{Definition}

\newtheorem{notation}{Notation}

\newtheorem{remark}[theorem]{Remark}

\newtheorem{claim}{Claim}
\newtheorem*{claim*}{Claim}

\newcommand{\s}{\mathfrak{s}}

\newcommand{\Z}{\mathbb{Z}}

\newcommand{\R}{\mathbb{R}}

\newcommand{\Hom}{\operatorname{Hom}}

\newcommand{\im}{\operatorname{Im}}

\newcommand{\Spin}{\operatorname{Spin}}

\newcommand{\bsm}{\left(\begin{smallmatrix}}
\newcommand{\esm}{\end{smallmatrix}\right)}

\definecolor{bettergreen}{rgb}{0.0, 0.5, 0.0}

\RequirePackage{tikz-cd}
\RequirePackage{amssymb}
\usetikzlibrary{calc}
\usetikzlibrary{decorations.pathmorphing}

\tikzset{curve/.style={settings={#1},to path={(\tikztostart)
    .. controls ($(\tikztostart)!\pv{pos}!(\tikztotarget)!\pv{height}!270:(\tikztotarget)$)
    and ($(\tikztostart)!1-\pv{pos}!(\tikztotarget)!\pv{height}!270:(\tikztotarget)$)
    .. (\tikztotarget)\tikztonodes}},
    settings/.code={\tikzset{quiver/.cd,#1}
        \def\pv##1{\pgfkeysvalueof{/tikz/quiver/##1}}},
    quiver/.cd,pos/.initial=0.35,height/.initial=0}

\tikzset{tail reversed/.code={\pgfsetarrowsstart{tikzcd to}}}
\tikzset{2tail/.code={\pgfsetarrowsstart{Implies[reversed]}}}
\tikzset{2tail reversed/.code={\pgfsetarrowsstart{Implies}}}
\tikzset{no body/.style={/tikz/dash pattern=on 0 off 1mm}}

\begin{document}
\title{Knotted surfaces with simply-connected complements}

\author[A.~Pyronneau]{Audrick Pyronneau}
\address{The University of Texas at Austin, Austin TX 78712}
\email{audrickpyro@utexas.edu}

\def\subjclassname{\textup{2020} Mathematics Subject Classification}
\expandafter\let\csname subjclassname@1991\endcsname=\subjclassname
\subjclass{
57K40, 
57K10, 
57N35, 
57N70, 
}

\begin{abstract}
We study locally flat, compact, oriented, genus $g$, surfaces with boundary a fixed knot $K,$ properly embedded in simply-connected 4-manifolds with boundary the 3-sphere, and whose complements have trivial fundamental group. We show that if two such surfaces are homologous then they are topologically ambiently isotopic rel. boundary. 
 \end{abstract}

\maketitle

\section{Introduction}
\label{sec:Introduction}

Given a closed simply-connected 4-manifold $X,$ Boyer \cite{Boyer1993} proved that closed genus $g$ orientable surfaces $F_1,F_2\subset X$ with $\pi_1(X\setminus F_i)=1$ are isotopic  if they are homologous. The goal of this article is to generalize this result to surfaces with boundary a knot $K\subset S^3.$ Building on these ideas and using $X$ to denote a compact, simply-connected 4-manifold with boundary $S^3,$ we prove the following uniqueness result:

\begin{thm}\label{mainthm}
   Let $K$ be a knot in $S^3.$ Any two homologous orientable genus $g$ surfaces $F_1,F_2\subset X,$ both with boundary $K$ and $\pi_1(X\setminus F_i)=1$ for $i=1,2,$ are topologically ambiently isotopic rel. boundary. 
\end{thm}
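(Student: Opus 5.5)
We follow Boyer's strategy: first produce a homeomorphism of pairs $(X,F_1)\to(X,F_2)$ that is the identity near $\partial X=S^3$ and acts trivially on $H_2(X)$, then upgrade it to an ambient isotopy rel. boundary. The second step uses the Perron--Quinn theorem in its relative form: a self-homeomorphism of a compact simply-connected 4-manifold that is the identity on the boundary and induces the identity on $H_2$ is topologically isotopic to the identity rel. boundary. For the first step we work with the exteriors. Let $\nu F_i$ be a closed tubular neighbourhood; it exists because $F_i$ is locally flat, and it is a trivial $D^2$-bundle since $F_i$ has boundary. Set $E_i=X\setminus \mathring{\nu}F_i$. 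Then $\partial E_i$ is the union of the knot exterior $S^3\setminus \mathring{\nu}K$ and $F_i\times S^1$, glued along $\partial \nu K$. We first arrange, by isotoping $F_2$ rel. $K$ near the boundary, that the two surfaces agree in a collar $S^3\times[0,\epsilon]$. Any two orientable genus $g$ surfaces with boundary $K$ are homeomorphic rel. boundary, and both normal framings are pinned down by the Seifert framing of $K$. This gives a homeomorphism $\phi:\partial E_1\to\partial E_2$ that is the identity on the knot exterior and on the part of $F_i\times S^1$ lying over the collar.

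Next we compute the algebraic topology of $E_i$. By hypothesis $\pi_1(E_i)=1$. From the Mayer--Vietoris sequence for $X=E_i\cup\nu F_i$, and from Lefschetz duality, $H_2(E_i)$ is an extension involving $H_2(X)$ and $H_1(F_i)$, and the intersection form of $E_i$ restricted to $H_2(X)$ is determined by the class $[F_i]\in H_2(X,\partial X)$. The homology hypothesis $[F_1]=[F_2]$ should give an isomorphism $H_2(E_1)\cong H_2(E_2)$ that preserves intersection forms and is compatible with $\phi_*$ on $H_2(\partial E_i)$. More precisely, we want an isometry of the pairs $(H_2(E_i),\, H_2(\partial E_i)\to H_2(E_i))$ extending $\phi_*$. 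The $H_1(F)$-part, coming from the tori $\gamma\times S^1$ for curves $\gamma\subset F$, is mapped by $\phi$ canonically, since $\phi$ respects the product structures.

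Finally we apply Boyer's realization theorem for simply-connected 4-manifolds with the same boundary. It says that a boundary homeomorphism $\phi$, together with a compatible isometry of the intersection data (a morphism of the exact sequences $H_2(\partial)\to H_2\to H_2(\cdot,\partial)$), extends to a homeomorphism $\Phi:E_1\to E_2$ whenever the Kirby--Siebenmann invariants agree. They agree here because both $E_i$ sit inside $X$ with complements $\nu F_i$ on which the invariant vanishes. We then extend $\Phi$ over $\nu F_1\to\nu F_2$ by the bundle map covering $F_1\cong F_2$, which gives $\Psi:X\to X$ carrying $F_1$ to $F_2$ and equal to the identity near $S^3$. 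The map $\Psi_*$ on $H_2(X)$ is read off from the isometry chosen above. If it is not the identity, we precompose with a self-homeomorphism of $(X,F_1)$ that realizes the needed automorphism of $H_2(X)$, again using Boyer's realization theorem on $E_1$. Perron--Quinn then finishes the argument.

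The main obstacle is the middle step. We must construct the isometry of the relative homology sequences and check that it is compatible with $\phi_*$, and also verify that the $H_2(X)$-component can be taken to be the identity. This is subtle because $\partial E_i$ is not a homology sphere, so Boyer's extension obstruction genuinely involves $H_1(F)$ and the kernel of $H_2(\partial E)\to H_2(E)$. We would handle it by splitting $H_2(E_i)\cong H_2(X)\oplus H_1(F)$-type pieces using the homology class of $F_i$, and by computing the boundary maps explicitly from the Seifert framing.
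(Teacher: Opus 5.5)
\textbf{The gap is in your realization step.} Boyer's theorem does not say that a compatible pair $(\phi,\Lambda)$ extends to a homeomorphism $E_1\to E_2$ whenever the Kirby--Siebenmann invariants agree. Even with equal KS invariants there is a second obstruction $\theta(\phi,\Lambda)\in H^1(\partial E_2;\Z_2)$ \cite[Theorem 0.7]{boyer1986simply}. It measures the discrepancy between the bijections $\Spin(\partial E_1)\to\Spin(\partial E_2)$ induced by $\phi$ and by $\Lambda$. When the exteriors are spin, $(\phi,\Lambda)$ is realized if and only if $E_1\cup_\phi -E_2$ is spin. That is a condition on $\phi$ alone, and no change of $\Lambda$ can fix it.

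Since $H^1(\partial E_i;\Z_2)\supseteq\Z_2^{2g}$, this obstruction is genuinely present, and your version of the theorem is false. Suppose $F$ is characteristic and $h\colon F\to F$ is a homeomorphism rel.\ boundary with $q_{FK}\circ h_*\neq q_{FK}$; for instance, when $g\ge 2$, take a Dehn twist about a nonseparating curve on which $q_{FK}$ vanishes. Then for any bundle map covering $h$ a compatible $\Lambda$ exists and the KS invariants trivially agree. Yet no extension exists, because a homeomorphism of pairs $(X,F)\to(X,F)$ restricting to $h$ forces $q_{FK}\circ h_*=q_{FK}$. So the choice of $\phi$ matters. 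Your assertions that the surfaces are homeomorphic rel.\ boundary and that the bundle map is canonical hide exactly the freedom that has to be used.

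The paper supplies this ingredient case by case.
\begin{enumerate}
\item If $X$ is spin, the bundle isomorphism is precomposed with a fibrewise rotation $z\mapsto\gamma(p(z))\cdot z$ for a suitable $\gamma\colon F_1\to S^1$. This makes it match the restrictions of the spin structure of $X$ to $\overline{\nu}F_1$ and $\overline{\nu}F_2$.
\item If the $F_i$ are characteristic, $h$ is chosen to be an isometry of Rokhlin forms. Such an $h$ exists because the Arf invariants agree by the relative Rokhlin theorem (Proposition~\ref{exist}). A comparison of the Rokhlin and Kirby--Taylor forms on $F\times S^1$ then shows that the pulled-back spin structure extends over $E_1$ (Theorem~\ref{SpinU}).
\item If the exteriors are non-spin, $\theta$ is killed by replacing $\Lambda$ with $\Lambda+i_{\partial X_{F_2}}\circ\psi\circ\hat Q_{X_{F_1}}$. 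This must be done without destroying $i_2\circ\Lambda=i_1$; see Lemma~\ref{main2}, which uses Lemma~\ref{4.7} and an extra condition on $\psi$ when $e=0$.
\end{enumerate}

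This also shows why your final correction is circular. Precomposing with a self-homeomorphism of $(X,F_1)$ realizing a prescribed automorphism of $H_2(X)$ is the same realization problem, with the same obstruction. The paper instead builds $\Lambda$ to commute with the inclusions into $H_2(X)$ from the start. It then proves $\hat F_*=\mathrm{id}$ (Proposition~\ref{main3}), using nondegeneracy of $Q_X|_{E(\alpha)}$ when $e\neq 0$ and the splitting $E(\alpha)=\langle\tilde\alpha\rangle\oplus W$ when $e=0$.

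Your closing appeal to a relative Perron--Quinn theorem is fine here because $\partial X=S^3$ \cite[Corollary C]{orson2022mapping}. For general boundary one would also need the variation to vanish.
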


Recently, Conway, Orson, and Pencovitch in \cite{conway2025simplyslicingknots}  proved an existence result giving conditions for a knot to bound a locally flat disk in a simply-connected 4-manifold with boundary $S^3,$ whose complement has finite cyclic fundamental group. Combining \cite[Corollary 1.3]{conway2025simplyslicingknots} and \cite[Corollary 1.7]{KPRT2024} with Theorem~\ref{mainthm} gives the following:
\begin{corollary}
Let $K \subset S^3$ be a knot.
Mapping a surface $F\subset X$ to its relative homology class gives the following
bijections:

\footnotesize
For $g>0$,
\[
\left\{
F \subset X \ \middle|\
\begin{array}{c}
g(F)=g,\ \partial F=K,\\
\pi_1(X\setminus F)=1
\end{array}
\right\}
\Big/{\sim}
\longleftrightarrow
\left\{
x\in H_2(X,\partial X)\ \middle|\ x \text{ is primitive}
\right\}.
\]

For $g=0$,
\[
\left\{
F \subset X \ \middle|\
\begin{array}{c}
F \text{ is a disk},\ \partial F=K,\\
\pi_1(X\setminus F)=1
\end{array}
\right\}
\Big/{\sim}
\longleftrightarrow
\left\{
x\in H_2(X,\partial X)\ \middle|\
\begin{array}{c}
x \text{ is primitive, and if $x$ is characteristic},\\[2pt]
\displaystyle
\operatorname{Arf}(K)+KS(X)
+\frac{\sigma(X)-x\cdot x}{8}
\equiv 0 \pmod 2
\end{array}
\right\}.
\]
Here $\sim$ denotes topological ambient isotopy rel.\ boundary.

\end{corollary}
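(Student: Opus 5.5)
The plan is to check three things about the map $F\mapsto [F]\in H_2(X,\partial X)$: it is well defined, it is injective, and its image is exactly the stated set of classes.

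\emph{Well-definedness and injectivity.} A topological ambient isotopy rel.\ boundary carries $F$ to a surface in the same relative class. So the map is well defined on isotopy classes. Moreover, genus and the condition $\pi_1(X\setminus F)=1$ are invariant under such isotopies. Injectivity is precisely Theorem~\ref{mainthm}: two surfaces of the same genus, bounded by $K$, homologous, and with simply-connected complements are ambiently isotopic rel.\ boundary.

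\emph{The image lies in the stated set.} Suppose $\pi_1(X\setminus F)=1$. Then $H_1(X\setminus F)=0$. Using the long exact sequence of the pair $(X, X\setminus \nu F)$ together with excision and Lefschetz duality, $H_1(X\setminus F)$ is cyclic, generated by a meridian of $F$. A meridian is homologous to zero $d$ times exactly when $d$ is the divisibility of $[F]$: it pairs with classes in $H_2(X)$ via the intersection number with $[F]$, and $X$ is simply connected with $H_2(X)\cong \Hom(H_2(X,\partial X),\Z)$. Hence $H_1(X\setminus F)\cong \Z/d$, so $d=1$ and $[F]$ is primitive. For $g=0$ and $x$ characteristic, the congruence $\operatorname{Arf}(K)+KS(X)+\frac{\sigma(X)-x\cdot x}{8}\equiv 0 \pmod 2$ is a necessary condition. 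I would take it directly from \cite[Corollary 1.3]{conway2025simplyslicingknots}, which is stated as an if-and-only-if for disks with simply-connected complement. Alternatively, it follows from a Rochlin/Kirby–Siebenmann-type argument applied to the double branched cover, or to the closed-up manifold.

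\emph{Surjectivity.} For $g=0$, \cite[Corollary 1.3]{conway2025simplyslicingknots} produces, for every class satisfying the conditions, a locally flat disk with boundary $K$ representing it and with simply-connected complement. The disk case is therefore complete. For $g>0$, I would invoke \cite[Corollary 1.7]{KPRT2024}, which should provide a genus $g$ surface with simply-connected complement in every primitive class. A more hands-on route is also available. Start from a disk or a genus $1$ surface in the class, where genus $1$ is needed to evade the Arf/Kirby–Siebenmann obstruction when $x$ is characteristic. Then stabilize by adding unknotted tubes inside a small ball. A local trivial stabilization does not change $\pi_1$ of the complement: the new surface's complement is obtained by a local modification whose effect is to identify the meridians on the two sides of the tube, and these are already equal in the group.

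\emph{Main obstacle.} The expected difficulty is producing genus-one representatives of characteristic classes that fail the Arf-type congruence. This is exactly where the cited result \cite{KPRT2024} is needed.
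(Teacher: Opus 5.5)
Your proposal is correct and follows the paper's own route. Injectivity is Theorem~\ref{mainthm}, and the description of the image, covering both necessity of the conditions and existence of representatives, comes from \cite[Corollary 1.3]{conway2025simplyslicingknots} for disks and \cite[Corollary 1.7]{KPRT2024} for $g>0$; your homological computation $H_1(X\setminus F)\cong\Z/d$ and your stabilization remark are correct extra detail.

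One caveat on an aside: if you meant the double cover of $X$ branched along $F$, it cannot exist when $\pi_1(X\setminus F)=1$. So for the Arf congruence rely on the closed-up manifold, i.e.\ a relative Rokhlin argument as in \cite{klug2021relativeversionrochlinstheorem}.
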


Lastly, a surface $F\subset X^4$ is said to be \emph{topologically flexible} if every element of the mapping class group can be extended as an ambient homeomorphism of $X^4.$ Additionally, a characteristic surface $F\subset X^4$ is said to be \emph{topologically spin-flexible} if every element of its mapping class group preserving the \emph{Rokhlin quadratic form} (see Definition~\ref{rokh}), can be extended as an ambient homeomorphism of $X^4.$ As a consequence of the proof of Theorem \ref{mainthm}, a genus $g$ surface $F\subset X$ with boundary $K\subset S^3,$ and $\pi_1(X\setminus F)=1$ is topologically flexible or spin-flexible respectively. 
\begin{customthm}{Theorem 4.3}
 Let $K\subset S^3$ be a knot and $F\subset X$ a genus $g$ surface with boundary $K$ and $\pi_1(X\setminus F)=1.$ Fix an orientation-preserving homeomorphism $h:F\to F$ restricting to the identity on $\partial F.$ 
    \begin{itemize}
        \item If $F$ is characteristic, and $h$ preserves the Rokhlin form $q_{FK}\circ h_{\ast}=q_{FK}$ then $h$ extends to a homeomorphism of pairs $H:(X,F)\xrightarrow[]{\cong}(X,F).$\item When $F$ is ordinary, $h$ extends as a homeomorphism of pairs $H:(X,F)\xrightarrow[]{\cong}(X,F).$ 
        \end{itemize}
    \end{customthm}
Now we give a condensed outline of the proof of Theorem \ref{mainthm}.
Let $\nu F$ be an open tubular neighborhood, and $X_F:=X\setminus \nu F$ be the surface exterior.
\begin{enumerate}
    \item Start with a homeomorphism $h:F_1\to F_2$ that restricts to the identity on $\partial F_1=K.$ Since any homeomorphism of a surface lifts to a bundle isomorphism of the trivial $D^2-$bundle, extend $h$ to a bundle isomorphism $H:(\overline{\nu}F_1,F_1)\to (\overline{\nu}F_2,F_2).$ Now using $H$ we define $f:\partial X_{F_1}\to \partial X_{F_2},$ a homeomorphism over the surface exterior boundaries.
    \item Next we build an isometry on 2nd homology (see Proposition \ref{main1} and Lemma \ref{main2}) $\Lambda:(H_2(X_{F_1}),Q_{X_{F_1}})\xrightarrow[]{\cong}(H_2(X_{F_2}),Q_{X_{F_2}})$ which is compatible with $f$ (see Definition~\ref{pair}), and apply \cite[Theorem 0.7 and Proposition 0.8]{boyer1986simply} to extend $f$ to $F:X_{F_1}\to X_{F_2}$ over the surface exteriors. 
    \item To finish, we define a $\hat{F}=F\cup H:(X,F_1)\to (X,F_2)$ which restricts to $id_{S^3}.$  \cite[Corollary C]{orson2022mapping} states that if $\hat{F}|_{S^3}=id_{S^3}$ and $\hat{F}_{\ast}|_{H_2(X)}=id_{H_2(X)}$ then $\hat{F}$ is isotopic to the identity rel. boundary. We verify this result in Proposition \ref{main3}. 
\end{enumerate}

\subsection*{Previous uniqueness results for oriented, simple surfaces and flexibility}
Lastly, we give a brief summary of surface classification results in topological $4-$manifolds. The \emph{unknotting conjecture} for locally flat surfaces in $S^4$ was first proven in the $g=0$ case by Freedman in \cite[Theorem 7]{Freedman1984DiskTheorem} (see also \cite[Theorem 11.7A]{FreedmanQuinn1990}). Furthermore, Conway and Powell prove the unknotting conjecture given $g\geq3$ \cite[Theorem 1.1]{Conway_2023}. Moreover, Lee and Wilczyński \cite{LeeWilczynski1990LocallyFlatSpheres,LeeWilczynski1993RepresentingHomology,LeeWilczynski1997} proved existence and uniqueness results for locally flat surfaces in closed simply-connected 4-manifolds representing a fixed homology class. The aforementioned \cite{Boyer1993} also provides existence for closed surfaces of all genera when the surface exterior is simply-connected. Finally, \cite{ConwayPowell2021HomotopyRibbonDiscs,Conway_2023,conway20244manifoldsboundaryfundamentalgroup, conway2025simplyslicingknots} give existence and uniqueness results for $\Z-$surfaces with boundary.

\cite{Hirose2002Diffeomorphisms} introduces flexibility of closed surfaces embedded in
$4$-manifolds. For unknotted orientable surfaces in $S^4$, he
showed that extendibility of surface diffeomorphisms  is detected by preservation of the Rokhlin
quadratic form. More recently, Lehman \cite{lehman2026flexiblesurfacesmathbbcp2s2times} constructs flexible surfaces and spin-flexible surfaces in $\mathbb{CP}^2$ and $S^2 \times S^2$ within any prescribed ordinary or characteristic homology class.
\subsection*{Organization}
In Section \ref{sec:AlgTop}, we collect some facts about the algebraic topology of surface exteriors. In Section \ref{Spinor}, we give conditions when given $F_1,F_2\subset X$ it is possible to form the union $M := X_{F_1}\cup -X_{F_2}$ along a homeomorphism of the boundaries so that $M$ is a spin 4-manifold. Finally, in Section \ref{end}, we give the proof of the main theorem. 
\subsection*{Conventions}
We work in the topological category. Embeddings are assumed to be locally flat and manifolds are assumed to be compact, connected, and oriented, with connected boundary.
\subsection*{Acknowledgments}
The author is grateful to his advisor, Anthony Conway, for helpful conversations, feedback, and support. The author was supported by the NSF via the Graduate Research Fellowship Program.
\section{The algebraic topology of surface exteriors} \label{sec:AlgTop}
Here we recall a few facts on the algebraic topology of simply-connected surface exteriors. Fix $X$ to be a simply-connected 4-manifold with boundary homeomorphic to $S^3.$
Let $X_F := X \setminus \nu F$ denote the surface exterior.

\subsection{The homology of $\partial X_F$} 
Let $F\subseteq X$ be an embedded orientable surface with boundary a knot $K\subset S^3.$ Note the boundary of the exterior $\partial X_F$ decomposes as $E_K\cup -\mathring{Y},$ for $E_K$ the knot exterior of $K\subseteq S^3$ and $\mathring{Y}:=(\partial\overline{\nu}F\setminus \overline{\nu}(\partial F))$ an oriented 3-manifold with the structure of a trivial $S^1-$ bundle over $F.$ 
Now we compute the homology groups of $\mathring{Y}.$ Abstractly $\mathring{Y}$ is homeomorphic to $S(\overline{\nu}F),$ the unit sphere bundle of $\overline{\nu}F.$

\begin{lemma}\label{HomologyY}
The nontrivial homology groups of $\mathring{Y}$ are $$H_0(\mathring{Y})\cong\Z,\;\;\;H_1(\mathring{Y})\cong \Z^{2g}\oplus \Z,\text{ and }H_2(\mathring{Y})\cong \Z^{2g}. $$

\end{lemma}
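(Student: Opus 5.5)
Since $\mathring{Y}$ is a trivial $S^1$-bundle over $F$, we have a homeomorphism $\mathring{Y}\cong F\times S^1$. The plan is to compute the homology of $F\times S^1$ with the Künneth formula, after first recording the homology of $F$ itself.

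First, $F$ is a compact orientable genus $g$ surface with one boundary component. It deformation retracts onto a wedge of $2g$ circles, so
\[
H_0(F)\cong\Z,\qquad H_1(F)\cong\Z^{2g},\qquad H_2(F)=0.
\]
The vanishing of $H_2(F)$ is exactly where the nonempty boundary enters. For a closed surface we would have $H_2\cong\Z$ instead, and the answer would change.

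Next, all homology groups of $F$ and of $S^1$ are free, so the Künneth formula has no Tor terms and gives $H_n(F\times S^1)\cong\bigoplus_{i+j=n}H_i(F)\otimes H_j(S^1)$. Since $H_0(S^1)\cong H_1(S^1)\cong\Z$, this becomes $H_n(\mathring{Y})\cong H_n(F)\oplus H_{n-1}(F)$. Reading off each degree:
\begin{itemize}
\item $H_0(\mathring{Y})\cong\Z$;
\item $H_1(\mathring{Y})\cong H_1(F)\oplus H_0(F)\cong\Z^{2g}\oplus\Z$, where the extra $\Z$ is generated by the meridian circle fiber;
\item $H_2(\mathring{Y})\cong H_2(F)\oplus H_1(F)\cong\Z^{2g}$, generated by tori $\gamma\times S^1$ for curves $\gamma$ running over a symplectic basis of $H_1(F)$;
\item $H_3(\mathring{Y})\cong H_2(F)=0$, consistent with $\mathring{Y}$ being a compact 3-manifold with nonempty boundary.
\end{itemize}
All higher groups vanish for dimension reasons.

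The only point that needs care is justifying the product structure. The normal bundle of an orientable surface in an oriented 4-manifold is an oriented $D^2$-bundle. Over a surface with nonempty boundary, which is homotopy equivalent to a 1-complex, every such bundle is trivial because its Euler class lies in $H^2(F)=0$. The paper already asserts this triviality. Alternatively, one can avoid the product structure entirely: since $\mathring{Y}\simeq F$-bundle with fiber $S^1$ over a graph, the Gysin (or Serre) sequence computes the same groups.
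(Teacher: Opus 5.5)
Your proof is correct and follows the paper's route: the normal disk bundle is trivial because its Euler class lies in $H^2(F)=0$, so $\mathring{Y}\cong F\times S^1$, and the K\"unneth formula gives the groups. Your degree-by-degree bookkeeping and the optional Gysin-sequence remark only spell out what the paper leaves implicit; the phrase ``$\mathring{Y}\simeq F$-bundle with fiber $S^1$ over a graph'' in that remark is garbled, but this does not affect the argument.
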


\begin{proof}
Recall that an oriented rank-2 vector bundle over a paracompact base space $F$ is trivial if and only if its Euler class vanishes. Since $F$ has nonempty boundary, it is homotopy equivalent to a 1-complex; in particular $H^2(F)=0,$ so $e(\overline{\nu} F)=0.$ We deduce that $\overline{\nu}{F}$ is the trivial rank-2 disk bundle over $F.$ It follows from the definition of $\mathring {Y}$ that $\mathring{Y}\cong F\times S^1.$    
The result now is a consequence of the Künneth formula. 
\end{proof}

Recall that the Euler number of a surface $F\subset X$ relative to a framing $s'$ of its boundary or \textit{relative Euler number} can be calculated by pairing the fundamental class of the surface with the \textit{relative Euler class} of its normal bundle $\nu_{F\subset X}$:  $$e(\nu_{F\subset X},s'):=[F]\frown \epsilon(\nu_{F\subset X},s')\in H_0(F)=\Z.$$ Here the \textit{relative Euler class} is the obstruction to finding a section $s$ of the sphere bundle $S(\nu F)$ extending $s'.$ For a more detailed discussion, see \cite[Section 2]{conway2025unknotting}.
\begin{remark}
In practice, the (relative) Euler number of $F$ equals the algebraic intersection number of a generic section with the zero section in the total space of $\overline{\nu}F$.
\end{remark}

 We now establish the relationship between the relative Euler number of a surface with boundary to the Euler number of the closed surface obtained by capping it off. We use $e(F,\epsilon_S)$ to denote the relative Euler number of $F\subset X$ with respect to the Seifert framing of its boundary.

\begin{lemma}\label{e=lk}The relative Euler number $e:=e(F,\epsilon_S)$ equals the Euler number $e(\hat{F})$ of the closed surface $(\hat{X},\hat{F})$ obtained by gluing $(X,F) $ and $ (B^4,G)$, where $G$ is a surface with boundary $K$. Moreover, the decomposition of $\partial X_F$ as $E_K\cup \mathring{Y},$ is given by identifying the $S^1-$fiber of $\mathring{Y}$ with the meridian of $E_K,$ and a section $s:F\to \mathring{Y},$ identifying $s(\partial F)$ with the $e-$framed longitude of $K.$ \end{lemma}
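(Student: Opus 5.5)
We prove the two assertions of Lemma~\ref{e=lk} separately, starting with the equality of Euler numbers. Write $\hat X = X\cup_{S^3} B^4$ and $\hat F = F\cup_K G$.

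\textbf{Euler numbers.} Use the Remark: the Euler number of $\hat F$ is the algebraic intersection of a generic section of $\nu\hat F$ with the zero section. We build such a section in two pieces.
\begin{itemize}
\item On $G\subset B^4$, take a section $s_G$ of $\nu G$ whose boundary is the Seifert framing of $K$. This is possible because pushing $G$ off itself gives a surface whose boundary is a parallel $K'$ with $\operatorname{lk}(K,K')$ equal to the relative Euler number $e(G,\epsilon_S)$.
\item To see that $e(G,\epsilon_S)=0$: push the interior of $G$ into $B^4$; then $G$ is isotopic rel boundary to a pushed-in Seifert surface $\Sigma$ for $K$. For such a surface the Seifert pushoff extends to a nowhere-vanishing normal section, so its relative Euler number is $0$. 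Relative Euler numbers of surfaces in $B^4$ with the same boundary agree, since each equals minus the linking number of $K$ with the boundary of the pushoff, and that linking number depends only on the framing.
\item Concretely, for a generic section of $\nu G$ restricting to a framing $\lambda$ on $K$, the count of zeros is $\operatorname{lk}(K,\lambda)$ up to sign. One sees this by capping in $B^4$ and using that $H_2(B^4,S^3)=0$ forces $G\cdot G'=\operatorname{lk}(K,K')$. With $\lambda=\epsilon_S$ this gives $0$.
\item On $F$, take a generic section $s_F$ extending the Seifert framing on $\partial F$. By definition its zeros count to $e(F,\epsilon_S)$.
\end{itemize}
The sections $s_F$ and $s_G$ glue along $K$ to a section of $\nu\hat F$. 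Additivity of the zero count then gives $e(\hat F)=e(F,\epsilon_S)+0=e$.

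\textbf{The boundary decomposition.} By Lemma~\ref{HomologyY}, $\overline\nu F\cong F\times D^2$ is trivial, so $\mathring Y\cong F\times S^1$.
\begin{itemize}
\item The $S^1$-fiber over a point of $\partial F$ bounds a normal disk to $F$. In $S^3$ this disk is a meridian disk of $K$, so the fiber is glued to the meridian of $E_K$.
\item Choose a nonvanishing section $s:F\to\mathring Y$, which exists because the bundle is trivial. Its restriction $s(\partial F)$ is some framing $\lambda$ of $K$, viewed as a curve on $\partial\overline\nu K$.
\item The relative Euler number satisfies $e(F,\lambda)=0$, since $s$ has no zeros. Changing the framing by $n$ meridians changes the relative Euler number by $\pm n$. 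Hence $\lambda$ differs from the Seifert framing by $e$, that is, $\lambda$ is the $e$-framed longitude.
\end{itemize}

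\textbf{Main obstacle.} The key difficulty is the sign conventions, in two places. First, we must fix the sign in the framing-change formula so that the answer is exactly $+e$; we would check this on a local model, such as a disk in $D^4$ with a twisted framing. Second, the orientation $-\mathring Y$ in the gluing must be handled consistently.
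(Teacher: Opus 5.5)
Your first part is the paper's argument: cap off with $(B^4,G)$, glue a section on $F$ extending the Seifert framing to one on $G$, and use $G\cdot G^+=\operatorname{lk}(\partial G,\partial G^+)$ in $B^4$ to see that the $G$-piece contributes $0$. One step there is false and should be deleted. A surface $G\subset B^4$ with $\partial G=K$ is in general not isotopic rel boundary to a pushed-in Seifert surface; a slice disk for a nontrivial slice knot already has smaller genus than any Seifert surface. Your third bullet is the correct and sufficient reason. Since closed surfaces in $B^4$ are null-homologous, $G\cdot G'$ depends only on $\partial G$ and $\partial G'$, and hence equals $\operatorname{lk}(K,K')$.

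For the framing statement you take a different route from the paper. You use the intrinsic framing-change formula $e(F,\lambda)=e(F,\epsilon_S)\pm n$ on $F$ alone. The paper caps off a second time: it glues the nowhere-zero section $s$ on $F$ to a section of $\nu G$ extending $\lambda=s(\partial F)$. It then reads off $e(\hat F)=0+G\cdot G^+=\operatorname{lk}(K,\lambda)$, so $n=e$. Your route does not need the auxiliary surface $G$, but it leaves the formula and its sign to be proved separately. The paper's route uses nothing beyond the fact you already established in part one, and it is the natural way to settle your ``main obstacle''.

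Be aware that this fixes the sign only with linking numbers measured in $\partial B^4$, and $\partial B^4=-\partial X$ as oriented manifolds. If you measure in $\partial X$ with its outward boundary orientation, the nowhere-zero section gives the $(-e)$-framed longitude. You can check this on a complex line $L\subset\mathbb{CP}^2$ with a small ball around a point of $L$ removed. There $e=1$, and the holomorphic section, nonvanishing on $F=L\setminus B$, induces framing $+1$ relative to the small sphere as $\partial B^4$, i.e.\ $-1$ relative to $\partial X$. This discrepancy is harmless, since only the cyclic group $\Z_e$ is used later.
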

\begin{proof}
    Let $G$ be a genus $g$ surface in $B^4$ with boundary $K\subset S^3.$ Pick push-offs $F^+\subset X$ and $G^+\subset B^4$ whose boundaries are the 0-framed longitude, $\lambda_K.$ Then $$F^+ \cup_{\lambda_K} G^+\subset  \hat{X}$$ is a push-off of $$\hat{F}=F\cup_{K} G$$ in $\hat{X}$. Using the definition of relative Euler number we have the following equalities: $$e(\hat{F})=\hat{F}\cdot\hat{F}^+=F\cdot F^++G\cdot G^+=e(F,\epsilon_{S})+lk (K,\lambda_K)=e(F,\epsilon_{S})+0.$$ 

    Lastly, recall $\mathring{Y}\cong F\times S^1$. Geometrically one sees that the $S^1$-fiber is identified with the meridian of $E_K$, and $s(\partial F)$ is identified with an $n$-framed longitude of $E_K$ for some $n$. Now choose new push offs $F^+=s(F)\subset X$ and $G^+\subset B^4$ of $G$ whose boundaries are the $n-$framed longitude. The framing coefficient $n$ is given by
\[
n=lk(\partial G,\partial G^+)=G\cdot \hat{G}^+.
\]
Therefore, since $\hat{F}^+=F^+\cup_{\lambda_K}G^+$ is a push-off of $\hat{F}$,
$$ n=\overbrace{F\cdot F^+}^{=0} +G\cdot G^+ =e(\hat{F}) =e(F,\epsilon_S),$$
where the final equality was established above. This completes the proof.
\end{proof}
For simplicity let $e(F)=e(F,\epsilon_S).$ Now we state one of the main results of this section.
\begin{proposition}\label{BXF}
 The nontrivial homology groups of $\partial X_F$ for $F$ with relative Euler number $e(F)=e,$ are $$H_0(\partial X_F)\cong\Z,\;H_1(\partial X_F)\cong \Z^{2g}\oplus \Z_e,\;H_2(\partial X_F)\cong\left\{ \begin{array}{rcl}
         \Z^{2g} & \mbox{for} & e\neq 0 \\ 
         \Z^{2g}\oplus \Z & \mbox{for} & e=0   
                \end{array}\right.\text{ and  }H_3(\partial X_F)\cong \Z\ .$$
\end{proposition}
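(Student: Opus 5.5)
We compute the homology of $\partial X_F = E_K\cup_{T} -\mathring{Y}$ using the Mayer--Vietoris sequence for this decomposition along the torus $T=\partial E_K=\partial\mathring{Y}$. By Lemma~\ref{HomologyY}, $\mathring{Y}\cong F\times S^1$, with $H_1(\mathring{Y})\cong H_1(F)\oplus\Z\langle\mu\rangle$ and $H_2(\mathring{Y})\cong H_1(F)\otimes H_1(S^1)\cong\Z^{2g}$. Here we use that $F$ has one boundary circle, so $H_2(F)=0$; the classes in $H_2(\mathring Y)$ are tori $a_i\times S^1$. For the knot exterior we have $H_1(E_K)\cong\Z\langle\mu_K\rangle$ and $H_2(E_K)\cong\Z$, generated by the boundary torus, i.e.\ a capped Seifert surface is not a class, but $H_2(E_K)\cong H^1(E_K,\partial)$ is generated by $[T]$. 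The torus itself has $H_1(T)=\Z\langle\mu,\lambda\rangle$ and $H_2(T)=\Z$.

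The key input is Lemma~\ref{e=lk}. On the $\mathring{Y}$ side, $\mu\mapsto$ the fiber class and the $e$-framed longitude $\lambda+e\mu\mapsto s(\partial F)$, which is $0$ in $H_1(\mathring{Y})$ because $\partial F$ bounds $F$. Hence $\lambda\mapsto -e\,[\text{fiber}]$. On the $E_K$ side, $\mu\mapsto\mu_K$ and $\lambda\mapsto 0$. So the map $H_1(T)\to H_1(E_K)\oplus H_1(\mathring{Y})=\Z\oplus\Z^{2g}\oplus\Z$ is given by $\mu\mapsto(1,0,1)$ and $\lambda\mapsto(0,0,-e)$ (up to the sign conventions from the orientation reversal). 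Its cokernel is $\Z^{2g}\oplus\Z/\langle(1,1),(0,e)\rangle\cong\Z^{2g}\oplus\Z_e$. Since $\partial X_F$ is connected, the next map $H_0(T)\to H_0\oplus H_0$ is injective, so $H_1(\partial X_F)\cong\Z^{2g}\oplus\Z_e$. For $e=0$ this reads $\Z_0=\Z$, which is the expected answer.

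For $H_2$, the relevant part of the sequence is
$$0\to H_3(\partial X_F)\to H_2(T)\to H_2(E_K)\oplus H_2(\mathring Y)\to H_2(\partial X_F)\to \ker\big(H_1(T)\to H_1(E_K)\oplus H_1(\mathring Y)\big)\to 0.$$
The map $H_2(T)\to H_2(E_K)\cong\Z$ is an isomorphism, and $H_2(T)\to H_2(\mathring Y)$ is zero because $\partial F\times S^1$ bounds $F\times S^1$. So $H_2(T)$ injects, giving $H_3(\partial X_F)=0$ from this piece. However, $\partial X_F$ is a closed oriented connected 3-manifold, so $H_3\cong\Z$ directly; that class maps to the boundary in $H_2(T)$, and we must handle it. The resolution is to use the full sequence degree by degree, with $H_3(E_K)=H_3(\mathring Y)=0$: $H_3(\partial X_F)\cong\ker(H_2(T)\to\cdots)$. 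This forces a recheck of the signs. The fundamental classes are glued with opposite orientations, so $[T]$ maps to $([T],-[T])$, and $[T]$ is actually $0$ in $H_2(E_K)$ since it is the boundary of $E_K$. Correcting this: $H_2(E_K)=0$ (the generator of $H^1(E_K,\partial)$ is not realized by $[T]$; indeed $H_2(E_K)\cong H^1(E_K,\partial E_K)\cong H_c$-dual with $H_1(E_K,\partial)=0$). So $H_2(T)\to 0\oplus 0$ is zero, and we get $H_3\cong\Z$ as expected. The cokernel of this map, $H_2(\mathring Y)\cong\Z^{2g}$, injects into $H_2(\partial X_F)$, with quotient equal to the kernel of the $H_1$ map, which is $\Z\langle\lambda\rangle$ if $e=0$ and $0$ otherwise. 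The resulting extension splits because the quotient is free. This yields the stated $H_2$. Alternatively, Poincaré duality together with universal coefficients gives $H_2\cong H^1\cong\Hom(H_1,\Z)$, which provides a quick consistency check.

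The main obstacle is bookkeeping the gluing map correctly: the sign and orientation conventions in $E_K\cup -\mathring Y$, and the fact from Lemma~\ref{e=lk} that the section's boundary is the $e$-framed longitude. These determine that the relation is exactly $e\mu=0$ after identifying $\mu_K$ with the fiber. I would verify this first, then read off all the groups.
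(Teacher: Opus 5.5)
Your argument is correct in its final form. For $H_0$, $H_3$ and $H_1$ it is the paper's argument: Mayer--Vietoris for $E_K\cup\mathring Y$, with the gluing read off from Lemma~\ref{e=lk}. You use the Seifert longitude $\lambda$ as the second basis element of $H_1(T)$, whereas the paper uses the section longitude $s(\partial F)=\lambda+e\mu$. This is only a change of basis, and it gives the same cokernel $\Z^{2g}\oplus\Z_e$.

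The genuine difference is $H_2$. The paper simply applies Poincar\'e duality and universal coefficients, $H_2(\partial X_F)\cong H^1(\partial X_F)\cong\Hom(H_1(\partial X_F),\Z)$, whose rank is the free rank of $H_1$, namely $2g$ or $2g+1$. The ``consistency check'' you mention at the end is in fact the paper's entire argument. You instead continue the Mayer--Vietoris sequence. This needs three inputs: $H_2(E_K)=0$, $[T]\mapsto 0$ in $H_2(\mathring Y)$, and the kernel of $H_1(T)\to H_1(E_K)\oplus H_1(\mathring Y)$. That route is longer, but it proves more. It shows that $i_{\mathring Y}\colon H_2(\mathring Y)\to H_2(\partial X_F)$ is injective, and an isomorphism exactly when $e\neq 0$; for $e=0$ the extra class maps onto $\lambda$. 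This is precisely what the paper has to re-derive later, in the proof of Lemma~\ref{surj} and the remark following it.

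One thing to clean up. Your first assertion, that $H_2(E_K)\cong\Z$ is generated by the boundary torus, is false. The reasoning built on it (``giving $H_3(\partial X_F)=0$ from this piece'') should be deleted rather than retracted mid-proof. $E_K$ is a homology circle by Alexander duality, so $H_2(E_K)=0$ from the outset. Your later justification via $H^1(E_K,\partial E_K)$ and $H_1(E_K,\partial E_K)=0$ is also fine. Likewise $[T]=[\partial E_K]$ vanishes in $H_2(E_K)$, and $[T]=[\partial \mathring Y]$ vanishes in $H_2(\mathring Y)$, for the reasons you give.
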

\begin{proof}
 $\partial X_F$ is a closed connected oriented 3-manifold, so $H_0(\partial X_F)$ and $H_3(\partial X_F)$ are both infinite cyclic. Now we calculate $H_1(\partial X_F)$ via the Mayer-Vietoris sequence with respect to decomposition $E_K\cup \mathring{Y}:$
$$\cdots \to H_1(E_K\cap \mathring Y)\to H_1(E_K)\oplus H_1(\mathring{Y})\to H_1(\partial X_F)\to 0.$$

The maps induced by inclusion of $\mathring{Y}\cap E_K$ into $E_K$ and $\mathring{Y}$ respectively, determine the first and second homology groups of $\partial X_F.$ Using Lemma \ref{e=lk}, we identify the $S^1-$fiber of $\mathring{Y}$ with the meridian of $E_K,$ and for a section $s(\partial F)$ with an $e-$framed longitude. Thus, 
\begin{enumerate}
    \item For $H_1(E_K\cap\mathring{Y})\to H_1(E_K),$ the meridian of $E_K\cap\mathring{Y},$ $m$ maps homologically to the generator of $H_1(E_K)$ while $l$ maps to $e[\mu_K]$.
    \item For $H_1(E_K\cap\mathring{Y})\to H_1(\mathring{Y}),$  $m$ maps to the ``$S^1-$fibre'' of $\mathring{Y}$ denoted by $[\mu]\in H_1(\mathring{Y})$ while the longitude of $E_K\cap\mathring{Y},$ $l$ maps to zero.
\end{enumerate}
We apply the first isomorphism theorem to verify the result for $H_1(\partial X_F).$ We apply Poincaré duality followed by the universal coefficient theorem to conclude the result.

\end{proof}
\subsection{The intersection form of a surface exterior} 

Fix a simply-connected 4-manifold $X$ with boundary homeomorphic to $S^3$ and an embedded surface $F\subseteq X$ with connected boundary such that $\pi_1(X_F)=1.$ This section is concerned with describing the intersection form of a simply connected surface exterior, $Q_{X_F}.$ 

\begin{lemma}\label{Im=E}
    Given $F\subset X $ such that $\pi_1(X_F)=1,$ the image of the inclusion induced map $i_{\ast}:H_2(X_F)\to H_2(X)$ is $\{\beta\in H_2(X):Q_X(\beta, [F])=0\}.$ 
\end{lemma}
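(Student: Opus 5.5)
Let $[F]\in H_2(X,\partial X)\cong H_2(X)$ denote the class of $F$; this identification holds since $\partial X=S^3$. The plan is to compare $H_2(X_F)\to H_2(X)$ with the long exact sequence of the pair $(X,X_F)$, using excision to identify the relative term with the homology of the tubular neighbourhood rel its boundary.

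\textbf{Step 1: reduce to a boundary map.} Excision gives
\[
H_2(X,X_F)\cong H_2(\overline{\nu}F,\mathring{Y}).
\]
Since $\overline{\nu}F\cong F\times D^2$, the Thom isomorphism (or Künneth with $H_*(D^2,S^1)$) gives $H_2(\overline{\nu}F,\mathring{Y})\cong H_0(F)\cong\Z$, generated by a normal disk fiber $D$. The exact sequence
\[
H_2(X_F)\xrightarrow{i_\ast}H_2(X)\xrightarrow{j}H_2(X,X_F)\cong\Z
\]
then shows that $\im i_\ast=\ker j$.

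\textbf{Step 2: identify $j$.} We claim that $j(\beta)=(\beta\cdot[F])\,[D]$. To see this, represent $\beta$ by a closed surface $\Sigma$ in the interior of $X$ transverse to $F$. We may take $\Sigma$ to avoid $\partial X$, since $H_2(X)$ is carried by the interior. Each transverse point of $\Sigma\cap F$ contributes a signed normal disk in $\Sigma\cap\overline{\nu}F$, while the rest of $\Sigma$ lies in $X_F$. Hence in $H_2(X,X_F)\cong\Z$ the class of $\Sigma$ is the algebraic intersection number times $[D]$.

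Here $\beta\cdot[F]$ is the pairing $H_2(X)\times H_2(X,\partial X)\to\Z$. Under the isomorphism $H_2(X)\cong H_2(X,\partial X)$ this is exactly $Q_X(\beta,[F])$, by the convention identifying $[F]$ with its absolute class. Therefore $\ker j=\{\beta: Q_X(\beta,[F])=0\}$, which is the claim.

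\textbf{Where the work lies.} The hypothesis $\pi_1(X_F)=1$ is not actually needed for this exact-sequence argument. The main obstacle is making Step 2 rigorous, i.e. checking the sign and normalisation conventions that identify $j$ with intersection with $[F]$. If the geometric argument is unconvincing, I would instead use duality: $H_2(X,X_F)\cong H^2(\overline{\nu}F,\partial_{\text{rel}})$, and $j$ corresponds to evaluating the Poincaré–Lefschetz dual of $[F]$ on $\beta$.

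As a cross-check, I would verify surjectivity of $j$ onto $\Z$ when $[F]$ is primitive. In that case $H_1(X_F)=0$ should follow, which is consistent with $\pi_1(X_F)=1$.
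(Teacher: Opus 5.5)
Your proof is correct and is essentially the paper's argument. The paper runs Mayer--Vietoris for $X=X_F\cup\overline{\nu}F$, and its boundary map $H_2(X)\to H_1(\mathring{Y})$ is, up to sign, your $j$ followed by the injective connecting map $H_2(\overline{\nu}F,\mathring{Y})\to H_1(\mathring{Y})$ (injective since $H_2(\overline{\nu}F)=0$, with image $\Z\langle\mu\rangle$). In both versions the key step is that a transverse closed surface meets $\overline{\nu}F$ in signed normal disks whose algebraic count is $Q_X(\beta,[F])$.

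You are also right that $\pi_1(X_F)=1$ is not needed for this statement. The paper uses $H_1(X_F)=0$ only to see that the boundary map is onto $\Z\langle\mu\rangle$, which does not affect its kernel. Indeed, your sequence gives $H_1(X_F)\cong\Z/d$, with $d$ the divisibility of $[F]$.
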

\begin{proof}
    Consider the Mayer–Vietoris sequence for the decomposition $X = X_F \cup \overline{\nu}F$:
    \[
        \cdots \to H_2(X_F) \oplus H_2(\nu F) \xrightarrow{i_\ast} H_2(X) \xrightarrow{\partial} H_1(X_F \cap \nu F) \to \overbrace{H_1(X_F)}^{=0}\oplus H_1(\nu F)\to 0 .
    \]
    First, observe that $\nu F$ deformation retracts to $F$, so $H_2(\nu F) \cong H_2(F) = 0.$ Therefore, by exactness $\operatorname{Im}(i_\ast) = \ker(\partial).$ It therefore remains to analyze $\partial.$ We begin by determining the image of $\partial.$ Observe that $X_F \cap \nu F \cong \mathring{Y}$, so by Lemma~\ref{HomologyY}  $H_1(X_F \cap \nu F)\cong H_1(F)\oplus H_1(S^1).$
     Under inclusion, the $H_1(F)$ summand maps isomorphically onto the $H_1(\nu F)$ term. Hence by exactness the image of $\partial$ maps isomorphically onto the $H_1(S^1)$ factor generated by the meridian of $F.$  Hence, we conclude that the above sequence simplifies to \[H_2(X_F)\xrightarrow[]{i_{\ast}}H_2(X)\xrightarrow[]{\partial}\Z\langle\mu\rangle\to 0.\] Finally, a second homology class $\beta\in H_2(X)$ can be represented by a closed surface $\Sigma\subset X.$ By the geometric description of the connecting homomorphism in the Mayer-Vietoris sequence, $\partial([\Sigma])=Q_X([\Sigma],[F]).$ We deduce that $\{\beta \in H_2(X):Q_X(\beta, [F])=0\}= \ker(\partial)=\text{Im}(i_{\ast})$ concluding the result. 
\end{proof}

For simplicity, given $\alpha\in H_2(X,\partial X)$ write $E(\alpha):=\{x\in H_2(X):Q_X(x, \alpha)=0\}.$ 
The next lemma is a preliminary result needed to calculate $Q_{X_{F}}.$
\begin{lemma}\label{surj}
 Suppose the relative Euler number $e(F)$ is non-zero and $\pi_1(X_F)=1.$ For the inclusion induced maps $$H_2(\partial X_F)\xrightarrow[]{i_{\partial X_F}} H_2(X_F)\xrightarrow[]{i_{\ast}}H_2(X)$$  the following inclusion holds: $$\operatorname{Im}(i_{\partial X_F})\subseteq \ker(i_{\ast}).$$
\end{lemma}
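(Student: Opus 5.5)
Since $e\neq 0$, Proposition~\ref{BXF} gives $H_2(\partial X_F)\cong \Z^{2g}$. The natural generators are the tori $\gamma\times S^1\subset \mathring{Y}\cong F\times S^1$, where $\gamma$ runs over simple closed curves representing a basis of $H_1(F)\cong\Z^{2g}$. The plan is first to verify that these classes generate $H_2(\partial X_F)$, and then to show that each such torus bounds in $X$. It then follows that its image under $i_\ast\circ i_{\partial X_F}$ vanishes.

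\textbf{Generation.} I would use the Mayer--Vietoris sequence for $\partial X_F=E_K\cup\mathring{Y}$, whose intersection is the torus $T=\partial E_K$:
\[
H_2(T)\to H_2(E_K)\oplus H_2(\mathring{Y})\to H_2(\partial X_F)\xrightarrow{\delta} H_1(T)\to H_1(E_K)\oplus H_1(\mathring{Y}).
\]
We have $H_2(E_K)\cong H_2(T)\cong\Z$, generated by the boundary torus. The kernel of the last map is computed from the description in the proof of Proposition~\ref{BXF}: $m\mapsto([\mu_K],[\mu])$ and $l\mapsto(e[\mu_K],0)$. A class $am+bl$ lies in the kernel iff $a=0$ and $a+be=0$. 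Since $e\neq0$, this forces $b=0$, so the kernel is trivial and $\delta=0$. Consequently $H_2(\partial X_F)$ is the image of $H_2(E_K)\oplus H_2(\mathring{Y})$. The class of $T$ already lies in the image of $H_2(\mathring{Y})$, so $H_2(\partial X_F)$ is generated by the image of $H_2(\mathring{Y})\cong H_1(F)\otimes H_1(S^1)$ (Künneth). That image is spanned by the tori $\gamma\times S^1$ described above.

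\textbf{Vanishing in $H_2(X)$.} Each torus $\gamma\times S^1$ is the boundary of the solid torus $\gamma\times D^2\subset\overline{\nu}F\subset X$. Hence $i_\ast i_{\partial X_F}[\gamma\times S^1]=0$ in $H_2(X)$. Combining this with the generation step gives $\operatorname{Im}(i_{\partial X_F})\subseteq\ker(i_\ast)$.

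\textbf{Main obstacle.} The delicate step is the generation argument. It is the only place where the hypothesis $e\neq0$ enters: when $e=0$, the Seifert surface of $K$ capped with a section $s(F)$ produces an extra class in $H_2(\partial X_F)$, namely the one detected by $\delta$ hitting $l$. That class need not die in $X$; its image pairs with $[F]$ nontrivially. For the $e\neq0$ case, I would double-check the identification of the maps on $H_1(T)$ from Lemma~\ref{e=lk}, since everything rests on the boundary map $\delta$ being zero. I note that $\pi_1(X_F)=1$ is not needed for this inclusion.
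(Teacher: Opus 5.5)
Your proof is correct and is essentially the paper's argument. The same Mayer--Vietoris computation for $\partial X_F=E_K\cup\mathring{Y}$ shows that $H_2(\mathring{Y})\to H_2(\partial X_F)$ is onto when $e\neq 0$. Your observation that each $\gamma\times S^1$ bounds $\gamma\times D^2\subset\overline{\nu}F$, so that $H_2(\mathring{Y})\to H_2(X)$ factors through $H_2(\overline{\nu}F)=0$, is the geometric content of the paper's chase through $H_3(X,X_F)\cong H_3(\overline{\nu}F,\mathring{Y})\to H_2(\mathring{Y})$. You are also right that $\pi_1(X_F)=1$ plays no role.

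There are two slips, neither of which affects the argument. First, $H_2(E_K)=0$, not $\Z$, because the boundary torus bounds $E_K$ itself. Second, in your closing remark, the extra $e=0$ class maps to the lift $\tilde\alpha$ of $[F]$, whose pairing with $[F]$ is $e=0$, not something nontrivial; indeed every class carried by $X_F$ is disjoint from $F$.
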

\begin{proof}
    We consider the following commutative diagram: 
    \begin{equation}\label{diagram1}
\begin{tikzcd}[ampersand replacement=\&,cramped]
	0 \& {H_3(X,X_F)} \&\&\&\& {H_2(X_F)} \&\& {H_2(X).} \\
	\\
	\& {H_3(\nu F,\mathring{Y})} \&\& {H_2(\mathring{Y})} \&\& {H_2(\partial X_F)}
	\arrow[from=1-1, to=1-2]
	\arrow["{{{\partial _{X_F}}}}", from=1-2, to=1-6]
	\arrow["{{{(exc)^{-1}}}}", from=1-2, to=3-2]
	\arrow["\cong"', from=1-2, to=3-2]
	\arrow["{{{i_\ast{}}}}", from=1-6, to=1-8]
	\arrow["{{{\partial_{\mathring{Y}}}}}", from=3-2, to=3-4]
	\arrow["{{{i_{\mathring{Y}}}}}", from=3-4, to=3-6]
	\arrow["{{{i_{\partial X_F}}}}", from=3-6, to=1-6]
\end{tikzcd}
\end{equation}
Recall that the top row is exact. $i_{\partial X_F}$ fits into the long exact sequence of the pair $(X_F,\partial X_F).$ $i_{\partial X_F}$ is injective since by Poincaré-Lefschetz duality $H_3(X_F,\partial X_F)=H^1(X_F)=0$ since $\pi_1(X_F)=1.$

To show $i_{\mathring{Y}}$ is an isomorphism, we analyze $H_2(\partial X_F)$ using the decomposition $ \partial X_F=\mathring{Y}\cup E_K.$ Consider the Mayer-Vietoris long exact sequence:
$$H_3(\partial X_F)\to H_2(E_K\cap \mathring{Y} )\to H_2(\mathring{Y})\oplus \underbrace{H_2(E_K)}_{=0}\xrightarrow[]{(i_{\mathring{Y}})_{\ast}} H_2(\partial X_F)\to H_1(\mathring{Y}\cap E_K)\to H_1(\mathring{Y})\oplus H_1(E_K).$$ 

Since $E_K$ is a homology circle, $H_2(E_K)=0.$ Additionally, the connecting homomorphism sends the fundamental class of $\partial X_F$ to the fundamental class of $E_K\cap \mathring{Y},$ so the map,
$$H_3(\partial X_F)\to H_2(E_K\cap \mathring{Y} )$$ is an isomorphism. Since $e\neq0$ by the proof of Proposition~\ref{BXF} the map $H_1(\mathring{Y}\cap E_K)\to H_1(\mathring{Y})\oplus H_1(E_K)$ is injective. 
  Since both adjacent maps are zero, exactness forces $i_{\mathring{Y}}$ to be an isomorphism. Since $e\neq0,$ both $(i_{\mathring{Y}})_{\ast}$ and $\partial_{\mathring{Y}}$ are isomorphisms, we conclude that by commutativity of the excision square and surjectivity of  $\partial_{\mathring{Y}}$ $$\text{Im}(i_{\partial X_F})\subseteq \text{Im}(\partial_{ X_F})=\ker(i_{\ast})$$ as desired.  
\end{proof}
\begin{remark}
Note the reverse inclusion $\text{Im}(\partial_{X_F})\subset \text{Im}(i_{\partial X_F})$ is true for every $e$ by the injectivity of $i_{\mathring{Y}}.$  \end{remark}
 Fix $F \subset X$ to be a genus $g$ surface, with boundary a knot, $K\subset S^3,$ representing primitive class $\alpha\in H_2(X,\partial X)$ such that $\pi_1(X_F)=1.$ Now we prove the main result of this section:
\begin{proposition}\label{QX}
The intersection form of $X_F$
splits as follows,
 \[Q_{X_F}\cong Q_X|_{E(\alpha)}\oplus0^{\oplus{2g}}.\]
\end{proposition}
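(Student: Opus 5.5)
The plan is to compare $H_2(X_F)$ with $E(\alpha)$ through the inclusion $i_\ast:H_2(X_F)\to H_2(X)$ and to identify the radical of $Q_{X_F}$ with the image of $H_2(\partial X_F)$.

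\textbf{Short exact sequence.} By Lemma~\ref{Im=E}, $\operatorname{Im}(i_\ast)=E(\alpha)$. Here $[F]$ is read in $H_2(X)\cong H_2(X,\partial X)$, using $\partial X=S^3$. From diagram~\eqref{diagram1}, $\ker(i_\ast)=\operatorname{Im}(\partial_{X_F})$. Moreover $H_3(X,X_F)\cong H_3(\nu F,\mathring Y)\cong H_1(F)\cong\Z^{2g}$ by excision and the Thom isomorphism. Exactness at $H_3(X,X_F)$ (the $0$ on the left of the top row, coming from $H_3(X)=0$) makes $\partial_{X_F}$ injective. This gives a short exact sequence
\[0\to \Z^{2g}\to H_2(X_F)\xrightarrow{i_\ast} E(\alpha)\to 0.\]
Since $E(\alpha)$ is a subgroup of the free group $H_2(X)$, it is free and the sequence splits: $H_2(X_F)\cong E(\alpha)\oplus\ker(i_\ast)$ with $\ker(i_\ast)\cong\Z^{2g}$.

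\textbf{Restriction of the form.} Two facts are needed.
\begin{enumerate}
\item $Q_{X_F}(x,y)=Q_X(i_\ast x,i_\ast y)$. This holds because the intersection of closed surfaces in the interior of $X_F$ is the same whether computed in $X_F$ or in $X$.
\item $\ker(i_\ast)$ lies in the radical of $Q_{X_F}$. For this, show $\ker(i_\ast)=\operatorname{Im}(\partial_{X_F})\subseteq \operatorname{Im}(i_{\partial X_F})$, which is the reverse inclusion in the Remark after Lemma~\ref{surj} and holds for every $e$ via the factorization in diagram~\eqref{diagram1}. Classes coming from the boundary pair trivially with everything, since they can be pushed into a collar and made disjoint from any class.
\end{enumerate}
Given these, for $x=a+k$ and $y=b+l$ with $a,b$ in a chosen splitting lift of $E(\alpha)$ and $k,l\in\ker(i_\ast)$, we get $Q_{X_F}(x,y)=Q_X(i_\ast a,i_\ast b)$. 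This is exactly the form $Q_X|_{E(\alpha)}\oplus 0^{\oplus 2g}$.

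\textbf{Main obstacle.} The delicate step is pinning down the radical, together with the injectivity of $\partial_{X_F}$ and the exact rank $2g$ of the kernel. This requires two checks.
\begin{itemize}
\item The composite $H_3(\nu F,\mathring Y)\to H_2(\mathring Y)\to H_2(\partial X_F)$ must be injective. The map $\partial_{\mathring Y}$ is injective because $H_3(\nu F)=0$. For $i_{\mathring Y}$, Lemma~\ref{surj} covers $e\neq 0$. When $e=0$, we still have $H_2(E_K)=0$ and $H_3(\partial X_F)\to H_2(E_K\cap\mathring Y)$ is onto, so $i_{\mathring Y}$ remains injective in the Mayer--Vietoris sequence. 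Note that only injectivity of $i_{\mathring Y}$ is needed here, not surjectivity.
\item Consistency with primitivity of $\alpha$. Primitivity ensures that $E(\alpha)$ has corank one and that the cokernel $\Z\langle\mu\rangle$ of $i_\ast$ in Lemma~\ref{Im=E} is all of $\Z$, so no torsion appears. We should still double-check the claim that $H_2(X_F)$ is free, which follows from $H_2(X_F)\cong H^2(X_F,\partial X_F)$ together with $H_1(X_F,\partial X_F)=0$.
\end{itemize}
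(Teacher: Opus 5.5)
Your proposal is correct and follows essentially the same route as the paper. Both use the split short exact sequence $0\to H_3(X,X_F)\to H_2(X_F)\to E(\alpha)\to 0$ from Lemma~\ref{Im=E}, and both get $\operatorname{Im}(\partial_{X_F})\subseteq\operatorname{Im}(i_{\partial X_F})$ from the factorization in diagram~\eqref{diagram1} to put the kernel in the radical. Both compute $H_3(X,X_F)\cong\Z^{2g}$ by excision; you use the Thom isomorphism where the paper uses the long exact sequence of $(\overline{\nu}F,\mathring{Y})$.

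Your extra injectivity check on $i_{\mathring Y}\circ\partial_{\mathring Y}$ is harmless but redundant. Since $H_3(X)=0$, the map $\partial_{X_F}$ is already injective, and so is the composite through which it factors.
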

\begin{proof}
   Consider the following short exact sequence from Lemma \ref{Im=E}:
   $$0\to H_3(X,X_F)\xrightarrow[]{\partial_{X_F}}H_2(X_F)\xrightarrow{i_{\ast}}E(\alpha)\to 0.$$
   Since $E(\alpha)$ is free abelian, the short exact sequence splits and $H_2(X_F)\cong E(\alpha)\oplus \mathrm{Im}(\partial_{X_F}).$ We now show $Q_{X_F}|_{\mathrm{Im}(\partial_{X_F})}=0.$ 
   Recall any class coming from $H_2(\partial X_F)$ pairs trivially with every class under $Q_{X_F}.$ By Remark 2.7, $e(F),\; \mathrm{Im}(\partial_{X_F})\subset \mathrm{Im}(i_{\partial X_F}).$ Furthermore, by the long exact sequence of a pair $(\overline{\nu}F,\mathring{Y}),$ we deduce that $H_3(\overline{\nu}F,\mathring{Y})\cong H_2(\mathring{Y})$ is isomorphic to $\Z^{2g}$ by Lemma \ref{HomologyY}. Hence by excision we deduce $H_3(X,X_F)\cong \Z^{2g}$ as well, and we conclude $Q_{X_F}\cong Q_X|_{E(\alpha)}\oplus0^{\oplus{2g}}.$
\end{proof}
We record two facts about $E(\alpha)$ for later use in Section \ref{end}.
\begin{lemma}
    If $e(F)$ is nonzero, $Q_X|_{E(\alpha)}$ is nondegenerate.\label{QXE}
\end{lemma}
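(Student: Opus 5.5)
The plan is to reduce the statement to linear algebra over $\mathbb{Q}$, using two inputs: the unimodularity of $Q_X$ and the identification of the relative Euler number with the self-intersection of $\alpha$. Here \emph{nondegenerate} means that the adjoint map $E(\alpha)\to \Hom(E(\alpha),\Z)$ is injective, equivalently that the Gram matrix has nonzero determinant. It need not be unimodular.

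\textbf{Step 1 (setup).} Since $\partial X\cong S^3$ is a homology sphere, the long exact sequence of the pair gives $H_2(X)\cong H_2(X,\partial X)$. By Poincaré--Lefschetz duality $Q_X$ is then unimodular on this free abelian group. So $\alpha=[F]$ may be regarded as an element of $H_2(X)$, and $E(\alpha)=\alpha^{\perp}$.

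\textbf{Step 2 (the key identity $Q_X(\alpha,\alpha)=e(F)$).} Form $\hat X=X\cup B^4$ and $\hat F=F\cup_K G$ as in Lemma~\ref{e=lk}. The inclusion $X\subset\hat X$ induces an isometry $H_2(X)\cong H_2(\hat X)$, because $H_2(B^4)=H_1(S^3)=0$. Under this isometry $[\hat F]$ corresponds to $\alpha$. The reason is that $\hat F$ meets $B^4$ in $G$, which contributes nothing in $H_2(X,\partial X)\cong H_2(\hat X, B^4)\cong H_2(\hat X)$. Hence
\[
Q_X(\alpha,\alpha)=Q_{\hat X}([\hat F],[\hat F])=e(\hat F)=e(F),
\]
where the middle equality is the standard fact that the Euler number of the normal bundle of a closed surface equals its self-intersection, and the last is Lemma~\ref{e=lk}. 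By hypothesis $e(F)\neq 0$, so $Q_X(\alpha,\alpha)\neq 0$. I expect this identification to be the only step needing care: one must check that the relative and absolute classes match under $H_2(X)\cong H_2(X,\partial X)$. The argument is essentially the push-off computation already carried out in the proof of Lemma~\ref{e=lk}.

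\textbf{Step 3 (linear algebra).} Suppose $x\in E(\alpha)$ satisfies $Q_X(x,y)=0$ for all $y\in E(\alpha)$. Work in $V=H_2(X)\otimes\mathbb{Q}$. The subspace $E(\alpha)\otimes\mathbb{Q}$ is the kernel of the nonzero functional $Q_X(\alpha,-)$; it is nonzero because $Q_X(\alpha,\alpha)\neq0$. So $E(\alpha)\otimes\mathbb{Q}$ has codimension one, and since it does not contain $\alpha$,
\[
V=\mathbb{Q}\alpha\oplus (E(\alpha)\otimes\mathbb{Q}).
\]
The functional $Q_X(x,-)$ vanishes on $E(\alpha)\otimes\mathbb{Q}$ by assumption, and on $\alpha$ because $x\in E(\alpha)$. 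Hence it vanishes on all of $V$. Since $Q_X$ is nondegenerate (indeed unimodular), $x=0$. Therefore the adjoint of $Q_X|_{E(\alpha)}$ is injective, which is exactly the assertion that $Q_X|_{E(\alpha)}$ is nondegenerate.
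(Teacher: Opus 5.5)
Your proof is correct, but it takes a different route from the paper's. The paper argues through the exterior: it writes an element of $E(\alpha)$ as $i_*(x)$ using Lemma~\ref{Im=E}, which needs $\pi_1(X_F)=1$. Such an $x$ then lies in the radical of $Q_{X_F}$, which the long exact sequence of $(X_F,\partial X_F)$ identifies with $\operatorname{Im}(i_{\partial X_F})$. Lemma~\ref{surj} finishes the argument, and that is where $e\neq 0$ enters, via the Mayer--Vietoris computation of $H_2(\partial X_F)$.

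You instead stay inside $H_2(X)$ and prove a purely lattice-theoretic fact: for a nondegenerate form, the orthogonal complement of a vector of nonzero square is nondegenerate. In your argument the hypothesis $e\neq 0$ enters only through $Q_X(\tilde\alpha,\tilde\alpha)=e(F)$.

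Your route is shorter and self-contained. It is also more general, since it uses neither $\pi_1(X_F)=1$ nor the primitivity of $\alpha$. Your Step 2 also makes explicit the identity $\tilde\alpha\cdot\tilde\alpha=e(F)$, which the paper uses without comment in Lemma~\ref{QXW} and Proposition~\ref{main3}.

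The paper's route, in exchange, expresses the result in terms of the exterior's intersection form. The key fact is that the radical of $Q_{X_F}$ dies in $H_2(X)$ when $e\neq 0$. That same fact, Lemma~\ref{surj}, is reused later, for instance to get $i_2\circ i_{\partial X_{F_2}}=0$ in Lemma~\ref{main2}.
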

\begin{proof}
Let $x\in H_2(X_F)$ be such that  $Q_X(i_{\ast}(x),i_{\ast}(z))=0$ for all $z\in H_2(X_F),$ and recall $E(\alpha)=\operatorname{Im}(i_{\ast}).$ We need to show $i_{\ast}(x)=0$ or equivalently $x\in \ker(i_{\ast}).$ Notice $0=Q_X(i_{\ast}(x),i_{\ast}(z))=Q_{X_F}(x,z)$ for every $z\in H_2(X_F).$ This implies $x\in \mathrm{rad}(Q_{X_F}),$ but by the long exact sequence of the pair $(X_F,\partial X_F),$ $\mathrm{rad}(Q_{X_F})=\mathrm{Im}(i_{\partial X_F}).$ Since $e(F)\neq 0$ by Lemma \ref{surj} $\mathrm{Im}(i_{\partial X_F})\subseteq \ker(i_{\ast})$ therefore, $x\in \ker(i_{\ast})$ concluding the proof.
\end{proof}

\begin{lemma}
    Let $e(F)=0$ and $\tilde{\alpha}:=(\tilde{i}_{\ast})^{-1}(\alpha)$ for $\tilde{i}:H_2(X)\xrightarrow[]{\cong} H_2(X,\partial X).$ There exists a subgroup $W$ of $E(\alpha)$ for which $Q_X|_W$ is nondegenerate, with $E(\alpha) = \langle \tilde\alpha\rangle \oplus W$. \label{QXW}
\end{lemma}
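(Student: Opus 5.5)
The plan is to build $W$ from a dual vector to $\tilde\alpha$, using that $Q_X$ is unimodular because $\partial X\cong S^3$.

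\emph{Step 1: $\tilde\alpha\in E(\alpha)$.} Since $H_1(\partial X)=H_2(\partial X)=0$, the map $\tilde i_\ast$ is an isomorphism and $Q_X$ is unimodular on $H_2(X)$. By definition $E(\alpha)=\{x: Q_X(x,\tilde\alpha)=0\}$. I would identify $Q_X(\tilde\alpha,\tilde\alpha)$ with the relative Euler number $e(F)$. The argument is the same as in Lemma~\ref{e=lk}: a Seifert-framed push-off $F^+$ of $F$ caps off in a collar of $\partial X$, using a pushed-in Seifert surface and its push-off, to give representatives of $\tilde\alpha$. Their intersection number is $F\cdot F^+ + lk(K,\lambda_K)=e(F)+0$. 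So $e(F)=0$ gives $\tilde\alpha\cdot\tilde\alpha=0$, i.e.\ $\tilde\alpha\in E(\alpha)$.

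\emph{Step 2: a dual vector.} Since $\alpha$ is primitive, so is $\tilde\alpha$. Unimodularity identifies $H_2(X)\to\Hom(H_2(X),\Z)$, $x\mapsto Q_X(x,\cdot)$, as an isomorphism. A primitive element of a free abelian group admits a homomorphism to $\Z$ sending it to $1$, so there is $y\in H_2(X)$ with $Q_X(\tilde\alpha,y)=1$. Define
\[
W:=\{x\in E(\alpha): Q_X(x,y)=0\}=\langle\tilde\alpha,y\rangle^{\perp}.
\]
For $x\in E(\alpha)$ we have $x-Q_X(x,y)\tilde\alpha\in W$, using $\tilde\alpha\in E(\alpha)$ and $Q_X(\tilde\alpha,y)=1$. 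Moreover $k\tilde\alpha\in W$ forces $k=Q_X(k\tilde\alpha,y)=0$. Hence $E(\alpha)=\langle\tilde\alpha\rangle\oplus W$.

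\emph{Step 3: nondegeneracy.} The sublattice $P=\langle\tilde\alpha,y\rangle$ has Gram matrix $\bsm 0&1\\1&Q_X(y,y)\esm$ of determinant $-1$, so $Q_X|_P$ is unimodular. The standard splitting then gives $H_2(X)=P\oplus P^{\perp}$ with $Q_X|_{P^\perp}$ unimodular: project $z\mapsto z-\pi_P(z)$, where $\pi_P$ is defined using the inverse Gram matrix. As $P^\perp=W$, the form $Q_X|_W$ is unimodular and in particular nondegenerate.

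The only step needing care is Step 1, the identification $Q_X(\tilde\alpha,\tilde\alpha)=e(F)$. It is essentially the computation already done in Lemma~\ref{e=lk}, but with the cap taken in a collar of $\partial X$ rather than in $B^4$. I would check that the capped surface indeed represents $\tilde\alpha=\tilde i_\ast^{-1}(\alpha)$, which holds because its image in $H_2(X,\partial X)$ agrees with $[F,\partial F]$ after collapsing the collar.
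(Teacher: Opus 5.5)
Your proof is correct, but it takes a different route from the paper's.

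\textbf{The paper's route.} The paper takes an \emph{arbitrary} complement $W$ of $\langle\tilde\alpha\rangle$ in $E(\alpha)$, which exists because $\tilde\alpha$ is primitive. It then shows directly that the radical of $Q_X|_W$ is trivial. If $\gamma\in W$ pairs to zero with $W$, then $\gamma$ pairs to zero with all of $E(\alpha)$. Take a dual vector $v$ with $Q_X(v,\tilde\alpha)=1$, so that $H_2(X)=\langle v\rangle\oplus E(\alpha)$. Comparing with $v$ gives $Q_X(\gamma,-)=Q_X(n\tilde\alpha,-)$ for $n=Q_X(\gamma,v)$. Unimodularity then gives $\gamma=n\tilde\alpha\in W\cap\langle\tilde\alpha\rangle=0$.

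\textbf{Your route.} You instead use the dual vector $y$ to \emph{define} one particular complement, $W=\langle\tilde\alpha,y\rangle^{\perp}$. You then split off the unimodular rank-two lattice $P=\langle\tilde\alpha,y\rangle$. This gives a stronger conclusion: $Q_X|_W$ is unimodular, not merely of trivial radical. The cost is that it is proved only for this specific $W$.

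\textbf{Reconciling the two.} Note that $\tilde\alpha$ lies in the radical of $Q_X|_{E(\alpha)}$. Hence every complement maps isometrically onto $E(\alpha)/\langle\tilde\alpha\rangle$ with the induced form, so all complements are isometric to your $W$. In particular every complement is unimodular. Either version suffices for the later use in the proof of Proposition~\ref{main3}, which needs only the splitting and nondegeneracy of $Q_X|_W$.

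\textbf{Step 1.} Your first step gives an actual argument for $Q_X(\tilde\alpha,\tilde\alpha)=e(F)$: cap off in a collar, as in Lemma~\ref{e=lk}, and check that the capped surface represents $\tilde\alpha$. The paper simply asserts this equality.
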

\begin{proof}
Since $e(F)=0$, we have $Q_X(\tilde\alpha,\tilde\alpha)=0$, and hence
$\tilde\alpha\in E(\alpha)$. Since $\tilde\alpha$ is primitive and $E(\alpha)$ is free abelian, choose a splitting $E(\alpha)=\langle \tilde\alpha\rangle\oplus W$ for some subgroup $W.$ 

It remains to show that $Q_X|_W$ is nondegenerate. Suppose $\gamma\in W$ satisfies $Q_X(\gamma,w)=0$ for all $w\in W$. Since $\gamma\in W\subset E(\alpha)$, we also have $Q_X(\gamma,\tilde\alpha)=0.$ Thus $\gamma$ pairs trivially with all of $E(\alpha)=\langle \tilde\alpha\rangle\oplus W.$ By unimodularity of $Q_X$ and primitivity of $\tilde\alpha$, there exists $v\in H_2(X)$ such that $Q_X(v,\tilde\alpha)=1,$ thus $H_2(X)=\langle v\rangle\oplus E(\alpha).$ Hence any $x\in H_2(X)$ can be written as $x=mv+e$, with $m\in\mathbb Z$ and $e\in E(\alpha)$. Let $n=Q_X(\gamma,v)$. Since $\gamma$ pairs trivially with $E(\alpha)$, we have
\[
Q_X(\gamma,x)=mQ_X(\gamma,v)=mn.
\]
On the other hand, since $e\in E(\alpha)=\tilde\alpha^\perp$ and $Q_X(\tilde\alpha,v)=1$,
\[
Q_X(n\tilde\alpha,x)=Q_X(n\tilde\alpha,mv+e)=mn.\] Thus, $Q_X(\gamma,-)=Q_X(n\tilde\alpha,-).$ By unimodularity, $\gamma=n\tilde\alpha$. But $\gamma\in W$ and
$n\tilde\alpha\in \langle \tilde\alpha\rangle$, while $E(\alpha)=\langle\tilde\alpha\rangle\oplus W.$
Therefore $\gamma=0$. Hence the radical of $Q_X|_W$ is zero, so $Q_X|_W$ is nondegenerate.
\end{proof}

\subsubsection{A technical lemma}
This lemma will be invoked throughout the proof of the main theorem. Some preparation is needed before the statement.

Given a 4-manifold with boundary $X,$ fix the notation $Q_X^{\partial}$ to be defined as the pairing $H_2(X,\partial X)\times H_2(X)\to \Z$ given by $(a,b)\mapsto \text{ev}(\text{PD}^{-1}(a))(b).$ For the rest of the article we identify the dual space of the second homology and the 2nd relative homology group via the Poincaré duality and evaluation isomorphisms: $$H_2(X_F,\partial X_F)\cong H^2(X_F)\cong \Hom_{\Z}(H_2(X_F);\Z).$$
An isometry $\Lambda$ between two simply-connected four-manifolds refers to an isomorphism on 2nd homology that preserves the intersection form. $F_1$ and $F_2$ are to be embedded, homologous, genus $g$ surfaces with boundary $K$ such that $\pi_1(X\setminus F_i)=1.$ 

\begin{lemma}\label{kj}
Let for $j=1,2$ $k_j:H_2(X)\to H_2(X_{F_j},\partial X_{F_j})$ be the following composition of maps $$H_2(X)\to H_2(X,\overline{\nu}F_j)\xrightarrow[]{(exc)^{-1}} H_2(X_{F_j},\mathring{Y}_j)\to H_2(X_{F_j},\partial X_{F_j}).$$ Additionally, let $(i_j)_{\ast}:H_2(X_{F_j})\to H_2(X)$ be the induced maps from inclusion. 
    If $\Lambda$ is an isometry such that $(i_2)_{\ast}\circ\Lambda =(i_1)_{\ast}$ then following diagram commutes:
    \[\begin{tikzcd}[ampersand replacement=\&,cramped]
	{H_2(X_{F_1})} \&\& {H_2(X_{F_1},\partial X_{F_1})} \\
	\& {H_2(X)} \\
	{H_2(X_{F_2})} \&\& {H_2(X_{F_2},\partial X_{F_2})}
	\arrow[from=1-1, to=1-3]
	\arrow["{{(i_1)_{\ast}}}", from=1-1, to=2-2]
	\arrow["\Lambda"', from=1-1, to=3-1]
	\arrow["{{k_1}}", from=2-2, to=1-3]
	\arrow["{{k_2}}"', from=2-2, to=3-3]
	\arrow["{{(i_2)_{\ast}}}"', from=3-1, to=2-2]
	\arrow[from=3-1, to=3-3]
	\arrow["{{\Lambda^{\ast}}}"', from=3-3, to=1-3].
\end{tikzcd}\]\label{4.3}
\end{lemma}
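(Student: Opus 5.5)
The plan is to turn every arrow in the diagram into a statement about functionals on $H_2(X_{F_j})$, using the identification $H_2(X_{F_j},\partial X_{F_j})\cong \Hom_{\Z}(H_2(X_{F_j}),\Z)$ fixed above. Under this identification the unlabeled horizontal arrow is the map of the long exact sequence of the pair. It is the adjoint of the intersection form: $x\mapsto Q_{X_{F_j}}(x,-)$. The map $\Lambda^{\ast}$ is the dual of $\Lambda$, that is, precomposition with $\Lambda$, so it goes from functionals on $H_2(X_{F_2})$ to functionals on $H_2(X_{F_1})$.

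The key step is to prove the formula
\[
k_j(\beta)(y)=Q_X\bigl(\beta,(i_j)_\ast y\bigr)\qquad\text{for } \beta\in H_2(X),\ y\in H_2(X_{F_j}).
\]
In other words, up to duality $k_j$ is the transpose of $(i_j)_\ast$. For this I would argue as follows.

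\begin{itemize}
\item Since $\partial X=S^3$, the map $H_2(X)\to H_2(X,\partial X)$ is an isomorphism. The composition defining $k_j$ therefore agrees with $H_2(X,\partial X)\to H_2(X,\partial X\cup\overline{\nu}F_j)\cong H_2(X_{F_j},\partial X_{F_j})$. The last isomorphism comes from excising the interior of $\overline{\nu}F_j$, and $\partial X_{F_j}=E_K\cup\mathring{Y}_j$ is exactly the frontier.
\item Under Poincaré--Lefschetz duality, this relative map corresponds to the restriction $H^2(X)\to H^2(X_{F_j})$. This is the standard naturality of cap product with fundamental classes, $[X]\mapsto[X_{F_j},\partial X_{F_j}]$ under $H_4(X,\partial X)\to H_4(X,X\setminus \mathring{X}_{F_j})$.
\item Restriction on $H^2$ is dual to $(i_j)_\ast$ under evaluation. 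Combining these gives the formula.
\end{itemize}

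Verifying this duality bookkeeping, and in particular that the chosen excision and orientation conventions produce no sign, is the step I expect to be the main obstacle. The rest is formal.

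Granting the formula, the triangles commute as follows.

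\begin{itemize}
\item The left triangles are the hypothesis $(i_2)_\ast\circ\Lambda=(i_1)_\ast$.
\item For the upper and lower triangles, $Q_{X_{F_j}}(x,y)=Q_X\bigl((i_j)_\ast x,(i_j)_\ast y\bigr)$ by naturality of intersection numbers for closed classes in $X_{F_j}\subset X$. Hence $Q_{X_{F_j}}(x,-)=k_j\bigl((i_j)_\ast x\bigr)$, so the horizontal map equals $k_j\circ(i_j)_\ast$.
\item For the right triangle, for all $\beta$ and $y$ we have
\[
(\Lambda^{\ast}k_2\beta)(y)=Q_X\bigl(\beta,(i_2)_\ast\Lambda y\bigr)=Q_X\bigl(\beta,(i_1)_\ast y\bigr)=k_1(\beta)(y),
\]
so $\Lambda^{\ast}\circ k_2=k_1$.
\end{itemize}

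Finally, the outer square commutes either by composing the triangles or directly from $\Lambda$ being an isometry:
\[
\Lambda^{\ast}\bigl(Q_{X_{F_2}}(\Lambda x,-)\bigr)=Q_{X_{F_2}}(\Lambda x,\Lambda -)=Q_{X_{F_1}}(x,-).
\]
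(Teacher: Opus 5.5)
Your proposal is correct and follows the paper's argument. The paper likewise takes the horizontal maps to be $k_j\circ(i_j)_\ast$ and uses the hypothesis for the left triangles. It then checks $\Lambda^\ast\circ k_2=k_1$ via exactly your chain $Q^\partial_{X_{F_1}}(\Lambda^\ast k_2\mu,\xi)=Q_X(\mu,(i_2)_\ast\Lambda\xi)=Q_X(\mu,(i_1)_\ast\xi)=Q^\partial_{X_{F_1}}(k_1\mu,\xi)$ and concludes by nondegeneracy. Your Poincar\'e--Lefschetz naturality argument just makes explicit the identity $k_j(\beta)(y)=Q_X(\beta,(i_j)_\ast y)$ that the paper attributes to ``the definition of $k_j$''. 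No sign arises there, because $[X,\partial X]$ maps to $[X_{F_j},\partial X_{F_j}]$ with the induced orientation.
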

\begin{proof}
The following argument is given in \cite[Lemma 4.3]{Boyer1993} for closed surfaces, and we verify the argument for the case with boundary.
Observe, $k_j\circ (i_j)_{\ast}$ (for $j=1,2$) agrees with the horizontal maps in the diagram by construction. By assumption $(i_2)_{\ast}\circ \Lambda=(i_1)_{\ast}.$ Therefore, it is sufficient to verify that $k_1=\Lambda^{\ast}\circ k_2.$ Let $\mu\in H_2(X)$ and $\xi\in H_2(X_{F_1})$ be arbitrary. Then using the definition of $k_j,$ we obtain
  \begin{align*}   
Q_{X_{F_1}}^{\partial}(\Lambda^{\ast}\circ k_2(\mu), \xi) &= Q_{ X_{F_2}}^{\partial}(k_2(\mu), \Lambda(\xi))\\
&= Q_X(\mu ,(i_2)_{\ast}\circ\Lambda(\xi))\\
&= Q_X(\mu, (i_1)_{\ast}(\xi))\\
&=Q_{ X_{F_1}}^{\partial}(k_1(\mu), \xi).
\end{align*}

Hence, $\Lambda^{\ast}\circ k_2=k_1$ since $Q_{X_F}^{\partial}$ is nondegenerate.
\end{proof}
 
\section{A spin union of surface exteriors}\label{Spinor} Fix a simply-connected 4-manifold $X$ with boundary homeomorphic to $S^3.$ The goal of this section is to show that given two homologous genus $g$ surfaces with boundary $K,$ and $\pi_1(X_{F_i})=1\;(\operatorname{for}\;i=1,2),$ if both $X_{F_i}$ are spin, then their exteriors can be glued along a boundary homeomorphism $f$ such that $X_{F_1}\cup_f -X_{F_2}$ is spin. Recall the boundary of the exterior $\partial X_F$ decomposes as $E_K\cup -\mathring{Y},$ for $E_K$ the knot exterior of $K\subseteq S^3$ and $\mathring{Y}=\partial\overline{\nu}F\setminus \overline{\nu}(\partial F)$ an oriented 3-manifold with the structure of a trivial $S^1-$ bundle over $F.$
\begin{definition}   
Let $F_1,F_2\subset X$ be embedded surfaces with boundary $K\subset S^3$ a knot. Given a bundle isomorphism $H:(\overline{\nu}F_1,F_1)\to (\overline{\nu}F_2,F_2),$ we define a homeomorphism on the boundary of the surface exteriors as follows $$H|_{\mathring{Y}}\cup id_{E_K}:\partial X_{F_1}\to \partial X_{F_2}.$$ Since $H|_{F_1}$ restricts to $id_K$ on $\partial F_1=K$, the induced bundle map can be chosen to be  the identity over the normal disk bundle of $K$ in $S^3$. Hence $H$ restricts to the identity on the $\partial \overline{\nu} K,$ and $H|_{\mathring{Y}}\cup id_{E_K}$ is well defined. To simplify notation we write $f$ in place of $H|_{\mathring{Y}}\cup id_{E_K}.$ 
\end{definition}
Analogous to \cite[Proposition 6.2]{conway2025unknotting} which treats non-orientable surfaces in $S^4$ with $\pi_1(S^4\setminus F)\cong \Z_2$, we state our main theorem of this section:
\begin{theorem}\label{SpinU}
    Let $F_1,F_2\subseteq X$ be two homologous surfaces of genus $g$, with the same non-empty boundary $K\subseteq S^3,$ such that $X_{F_i}\; (\operatorname{for}\;i=1,2)$ are simply-connected and spin. Let $\mathfrak{s}^{\partial}_2\in \Spin(\partial X_{F_2})$ be the restriction of $\mathfrak{s}\in \Spin{(X_{F_2})}$ to the boundary. Given a bundle isomorphism $H:(\overline{\nu}F_1,F_1)\to(\overline{\nu}F_2,F_2)$ then
    \begin{enumerate}
        \item When $X$ is spin, there exists another bundle isomorphism $H':(\overline{\nu}F_1,F_1)\to(\overline{\nu}F_2,F_2)$ such that $H'|_{F_1}=H|_{F_1},$ and $f^{\ast}\s^{\partial}_2:=(H'|_{\mathring{Y_1}}\cup id_{E_K})^{\ast}\mathfrak{s}^{\partial}_2$ extends over $X_{F_1}.$
        \item When $F_1,F_2$ are characteristic, if $H|_{F_1}$ preserves the Rokhlin quadratic form (see Definition~\ref{rokh}) then $f^{\ast}\s^{\partial}_2:=(H|_{\mathring{Y_1}}\cup id_{E_K})^{\ast}\mathfrak{s}^{\partial}_2$ extends over $X_{F_1}.$
    \end{enumerate}
\end{theorem}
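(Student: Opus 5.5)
Since $X_{F_1}$ is simply connected, $H^1(X_{F_1};\Z_2)=0$ and $X_{F_1}$ has a unique spin structure $\mathfrak{s}_1$. Spin structures on $\partial X_{F_1}$ form an $H^1(\partial X_{F_1};\Z_2)$-torsor. So $f^{\ast}\s^{\partial}_2$ extends over $X_{F_1}$ if and only if $f^{\ast}\s^{\partial}_2=\s^{\partial}_1:=\mathfrak{s}_1|_{\partial X_{F_1}}$. Equivalently, the difference class
\[
d(f):=f^{\ast}\s^{\partial}_2-\s^{\partial}_1\in H^1(\partial X_{F_1};\Z_2)
\]
must vanish. I would detect $d(f)$ by evaluating on a basis of $H_1(\partial X_{F_1};\Z_2)$. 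By the proof of Proposition~\ref{BXF}, $H_1(\partial X_{F_1})$ is generated by the meridian $\mu$ together with curves $s(\gamma_1),\dots,s(\gamma_{2g})$, where $s:F_1\to\mathring{Y}_1$ is a section and $\gamma_1,\dots,\gamma_{2g}$ is a symplectic basis of $H_1(F_1)$.

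\emph{Step 1: the meridian and the $E_K$ part.} Since $f=\mathrm{id}$ on $E_K$, and a neighbourhood of the meridian lies in $E_K\subset S^3$, I would show that both restricted spin structures agree on $E_K$. Each is the restriction of a spin structure on $X_{F_i}\supset E_K$. On $E_K$ there are two spin structures, distinguished by their value on $\mu$, and that value is forced by whether the meridian circle bounds a disk fibre of $\nu F_i$ over which the spin structure of $X_{F_i}$ fails to extend. Because $X_{F_i}$ is spin while $X$ may or may not be, the value on $\mu$ is determined by $w_2(X)$ evaluated on $[F_i]$ (or on its closure), which agrees for $i=1,2$ since $F_1,F_2$ are homologous. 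Thus $d(f)(\mu)=0$.

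\emph{Step 2: the curves $s(\gamma_k)$.} For each $k$, the value $\s^{\partial}_i(s(\gamma))\in\Z_2$ is a function $q_i(\gamma)$ on $H_1(F_i;\Z_2)$. Changing the section by a map $F_1\to S^1$, i.e.\ by a class $c\in H^1(F_1;\Z)$, changes $s(\gamma)$ by $c(\gamma)\mu$ and so changes the value by $c(\gamma)\cdot\s(\mu)$.

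In case (1), $X$ is spin, so $[F_i]$ is ordinary. I expect $\s^{\partial}_i(\mu)=1$ here, since the spin structure does not extend over the disk fibre when $w_2(X)\cdot[F]$ is even but $X$ is spin. Precomposing $H$ with the bundle automorphism of $F_1\times D^2$ given by rotating fibres by a suitable $c:F_1\to S^1$, equal to $1$ on $\partial F_1$, then alters $d(f)(s(\gamma))$ by $c(\gamma)$ mod $2$. Any $\Z_2$-valued linear function on $H_1(F_1)$ can be achieved this way, with $c$ constant near $\partial F_1$ because $H^1(F_1,\partial F_1)\to H^1(F_1)$ is onto for a one-boundary surface. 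So we may choose $H'$ with $H'|_{F_1}=H|_{F_1}$ and $d=0$.

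In case (2), $F_i$ is characteristic and $\s^{\partial}_i(\mu)=0$. The framing-dependence then disappears, and $q_i$ is exactly the Rokhlin quadratic form $q_{F_iK}$ of Definition~\ref{rokh}, read off via the spin structure on the complement. The hypothesis $q_{F_2K}\circ (H|_{F_1})_{\ast}=q_{F_1K}$ then gives $d(f)(s(\gamma_k))=0$.

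The main obstacle is Step 2 in case (2): identifying the boundary spin-structure values with the Rokhlin form. That requires matching the definition of $q_{FK}$ (membranes and the framing relative to $F$) with the spin structure on $\partial X_F$. I would follow \cite[Proposition 6.2]{conway2025unknotting}, using that a membrane bounding $s(\gamma)$ in $X_{F}$ computes the obstruction to extending the induced framing.
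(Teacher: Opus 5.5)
Your strategy is in substance the paper's. In case (1) you precompose $H$ with a fibre rotation $R_c$ by a map $c\colon F_1\to S^1$. In case (2) you use that a spin structure on $\mathring{Y}_1$ is determined by its value on the fibre together with its Kirby--Taylor form, and that this form equals the Rokhlin form for structures extending over the exterior. Your evaluation of $d(f)$ on $\mu$ and the $s(\gamma_k)$ is just the bijection $\Psi_{s,\mu}$ of Proposition~\ref{COP} written in coordinates. The difference is that the paper runs case (1) on $\overline{\nu}F_1\cong F_1\times D^2$. There $R_c^{\ast}$ shifts every spin structure by $c \bmod 2$ (visible from $dR_c$ along the zero section), so the paper never needs the fibre value. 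Your boundary bookkeeping does need it, and that is where your justification goes wrong. Your remark that $c$ can be taken trivial near $\partial F_1$, because $H^1(F_1,\partial F_1)\to H^1(F_1)$ is onto, is a real improvement. It is needed for $H'|_{\mathring{Y}_1}\cup \mathrm{id}_{E_K}$ to be well defined, and the paper leaves it implicit.

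\textbf{The error in case (1).} When $X$ is spin, the unique spin structure on $X_{F_i}$ is the restriction of the one on $X$. So it \emph{does} extend over the disc fibres, and the fibre carries the bounding structure. That is exactly what makes rotation effective. Write $p\colon F_1\times S^1\to F_1$ for the projection and $\theta^{\ast}$ for the class dual to the fibre. Write $\mathfrak{t}\in\Spin(F_1\times S^1)$ as $\mathfrak{t}_0+p^{\ast}a+\epsilon\,\theta^{\ast}$, where $\mathfrak{t}_0$ is the spin structure of the product framing (non-bounding on the fibre). Since $dR_c$ is a shear, homotopic to the identity relative to that framing, one gets $R_c^{\ast}\mathfrak{t}=\mathfrak{t}+\epsilon\,p^{\ast}(c \bmod 2)$. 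Here $\epsilon=1$ exactly when $\mathfrak{t}$ is bounding on the fibre. So your formula ``change $=c(\gamma)\cdot\mathfrak{s}(\mu)$'' is right only if $\mathfrak{s}(\mu)=1$ means bounding. With the reason you give (``does not extend over the disk fibre''), the rotation would act trivially and case (1) would fail.

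Symmetrically, in case (2) the fibre value is non-bounding because $X$ is not spin. That is why the values on $s(\gamma)$ are section-independent. Also, the meridian value is governed by whether $X$ is spin, equivalently by $\langle w_2(X),v\rangle$ for $v\cdot[F_i]=1$. It is not governed by $\langle w_2(X),[F_i]\rangle$, which in the characteristic case is just $e \bmod 2$. Since $X$ is fixed, your conclusion $d(f)(\mu)=0$ still holds. With these corrections your argument goes through.
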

We record a corollary of our main result to be used later in the proof of Theorem \ref{mainthm}.
\begin{corollary} 
    If $X_{F_1}$ and $X_{F_2}$ are simply-connected and spin, then $X_{F_1}\cup_f-X_{F_2}$ is a simply-connected, closed, spin 4-manifold.
\end{corollary}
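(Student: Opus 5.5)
The plan is to establish the three properties of $M := X_{F_1}\cup_f -X_{F_2}$ in turn (closed, simply-connected, spin), with $f$ the boundary homeomorphism supplied by Theorem~\ref{SpinU}. By Theorem~\ref{SpinU} we may take $f = H|_{\mathring{Y}_1}\cup id_{E_K}$, where $H$ is either the modified bundle isomorphism $H'$ of case (1) or the Rokhlin-form preserving $H$ of case (2). In either case the pulled-back structure $f^{\ast}\s^{\partial}_2$ extends to a spin structure $\s_1$ on $X_{F_1}$.

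\textbf{Closed.} Since $f\colon \partial X_{F_1}\to\partial X_{F_2}$ is a homeomorphism and both exteriors are compact with connected boundary, $M$ is a closed oriented topological 4-manifold. Orientations are compatible because the second piece enters as $-X_{F_2}$.

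\textbf{Simply-connected.} This follows from Seifert--van Kampen applied to the decomposition $M=X_{F_1}\cup -X_{F_2}$, glued along the connected 3-manifold $\partial X_{F_1}$. The fundamental group $\pi_1(M)$ is a quotient of the amalgamated product $\pi_1(X_{F_1})\ast_{\pi_1(\partial X_{F_1})}\pi_1(X_{F_2})$. Both factors are trivial by hypothesis, so $\pi_1(M)=1$. To apply van Kampen in the topological category we use a collar of the boundary, which exists since the exteriors are topological manifolds with boundary.

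\textbf{Spin.} Take the spin structure $\s_1$ on $X_{F_1}$ extending $f^{\ast}\s_2^{\partial}$ and the given spin structure $\s$ on $X_{F_2}$, whose boundary restriction is $\s_2^{\partial}$. Then $\s_1|_{\partial X_{F_1}}=f^{\ast}(\s|_{\partial X_{F_2}})$, so the two spin structures agree along the gluing via $f$. Using collars, they glue to a spin structure on $M$. The only care needed is the orientation reversal on the second piece. A spin structure on $X_{F_2}$ induces one on $-X_{F_2}$, and its boundary restriction corresponds to $\s_2^\partial$ under the standard identification of spin structures on a 3-manifold and its reverse. Concretely, one can instead phrase spinness as the vanishing of $w_2(M)$ and use the Mayer--Vietoris gluing of the trivializations over the 2-skeleton, where orientation plays no role. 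I expect this orientation/identification bookkeeping to be the only real subtlety. Once it is settled, the corollary is immediate from Theorem~\ref{SpinU}.
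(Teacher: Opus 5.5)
Your proposal is correct and takes the same route as the paper, which records this as an immediate consequence of Theorem~\ref{SpinU}: since $f^{\ast}\mathfrak{s}^{\partial}_2$ extends over $X_{F_1}$, the spin structures glue along $f$, Seifert--van Kampen gives simple connectivity, and closedness is automatic. One small addition: in the characteristic case, the Rokhlin-form-preserving bundle isomorphism you invoke is supplied by Proposition~\ref{exist}.
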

The proof of Theorem \ref{SpinU} will decompose into two cases by the following lemma:
\begin{lemma}\cite[Lemma 7.18]{friedl2025foundations} Given $F\subset X$ a properly embedded surface, then $X_F$ is spin if and only if\begin{itemize}\item[1.] $X$ is spin,\item[2.] or $F$ is characteristic.\end{itemize}\end{lemma}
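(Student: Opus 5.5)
The approach is to reduce spinness of $X_F$ to the vanishing of the second Stiefel--Whitney class. Since $\pi_1(X_F)=1$ is not needed here, I work with a general properly embedded surface $F\subset X$. The exterior $X_F$ is spin if and only if $w_2(X_F)=0$, and $w_2(X_F)=i^\ast w_2(X)$, where $i\colon X_F\to X$ is the inclusion, because $TX_F=TX|_{X_F}$. The question is therefore when $w_2(X)\in H^2(X;\Z_2)$ restricts to zero on $X_F$.

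First I identify the kernel of $i^\ast\colon H^2(X;\Z_2)\to H^2(X_F;\Z_2)$. Using the long exact sequence of the pair $(X,X_F)$ together with excision $H^2(X,X_F;\Z_2)\cong H^2(\overline{\nu}F,\mathring{Y};\Z_2)$, the kernel is the image of $H^2(\overline{\nu}F,\mathring{Y};\Z_2)\to H^2(X;\Z_2)$. Since $\overline{\nu}F\cong F\times D^2$ is trivial, as in the proof of Lemma~\ref{HomologyY}, the relative Thom isomorphism gives $H^2(\overline{\nu}F,\mathring{Y};\Z_2)\cong H^0(F;\Z_2)\cong\Z_2$. This group is generated by the Thom class, and its image in $H^2(X;\Z_2)$ is the Poincaré--Lefschetz dual of $[F]\in H_2(X,\partial X;\Z_2)$. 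Dualising, the kernel of $i^\ast$ is $\{0,\mathrm{PD}[F]\}$ with $\mathbb{Z}_2$-coefficients.

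It follows that $X_F$ is spin if and only if $w_2(X)=0$ or $w_2(X)=\mathrm{PD}[F]$. The first condition says that $X$ is spin. By Wu's formula $w_2(X)$ evaluates on $H_2(X;\Z_2)$ as $x\mapsto x\cdot x$, so the second condition says that $[F]\cdot x\equiv x\cdot x \pmod 2$ for all $x$, which is exactly the statement that $F$ is characteristic. This argument covers both directions.

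The main point needing care is the boundary. Because $\partial X=S^3$, the map $H^2(X,\partial X)\to H^2(X)$ is an isomorphism, so relative and absolute classes can be identified. I also need to check that the Thom class pushes forward to the dual of the relative class $[F]$ even though $F$ meets $\partial X$. I would handle this by using the pair $(\overline{\nu}F,\mathring{Y})$ rather than $(\overline{\nu}F,\partial\overline{\nu}F)$ and applying Lefschetz duality with the appropriate boundary pieces.
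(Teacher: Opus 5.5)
Your proof is correct. The paper does not prove this lemma; it only quotes it from the cited reference. Your argument is the standard proof of the fact. You use that $w_2(X_F)=i^\ast w_2(X)$. By exactness and excision, the kernel of $i^\ast\colon H^2(X;\Z_2)\to H^2(X_F;\Z_2)$ is the image of $H^2(\overline{\nu}F,\mathring{Y};\Z_2)\cong\Z_2$; this uses that $F$ is connected, which is part of the paper's conventions. That image is $\{0,\mathrm{PD}[F]\}$, and Wu's formula converts $w_2(X)=\mathrm{PD}[F]$ into $F$ being characteristic.

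Two small points are worth stating explicitly.

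First, you can identify the image of the Thom class without Lefschetz-duality bookkeeping at the corner $\partial F\times S^1$. If $x\in H_2(X;\Z_2)$ is represented by a closed surface $\Sigma$ meeting $F$ transversely, then $\Sigma\cap\overline{\nu}F$ is a union of normal disks. So the image of the Thom class evaluates on $x$ to $\#(\Sigma\cap F)\equiv[F]\cdot x\pmod 2$.

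Second, the final step relies on two facts. A class in $H^2(X;\Z_2)$ is determined by its values on $H_2(X;\Z_2)$, which holds because $\Z_2$ is a field. If ``characteristic'' is defined by testing against integral classes, you also need $H_2(X;\Z)\to H_2(X;\Z_2)$ to be onto, which holds because $H_1(X)=0$. Wu's formula for $X$ follows from the closed case by capping off $\partial X=S^3$ with a ball, as you indicate.
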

Now we give an outline of the upcoming proof of Theorem \ref{SpinU}:

\emph{For the spin case:}
\begin{enumerate}
\item We first restrict the unique spin structure, $\mathfrak{s}$ on $X$ to the tubular neighborhood of both surfaces, $\mathfrak{s}^{\overline\nu F}_{i}\in \Spin(\overline\nu F_i),\;(\operatorname{for}\;i=1,2).$
\item It may not be the case that the bundle isomorphism $H,$ obeys the equality $H^{\ast}\mathfrak{s}^{\overline\nu F}_{2}=\mathfrak{s}^{\overline\nu F}_{1}.$ So, we modify $H$ without altering the 0-section.
\item Since $H^1(\overline{\nu}F_1;\Z_2)$ acts transitively on $\Spin(\overline {\nu}F_1)$ we can find an element $x\in H^1(\overline{\nu}F_1)$ that reduces mod 2 to $\tilde{x}\in H^1(\overline\nu F_1;\Z_2)$ such that $\tilde{x}\cdot\mathfrak{s}^{\overline\nu F}_{2}=\mathfrak{s}^{\overline\nu F}_{1}.$ By Brown representability we have a one to one correspondence between first cohomology classes of $H^1(F;\Z)$ and free homotopy classes $[F_1,S^1].$ So we first choose a suitable map $F\to S^1.$
\item We consider the bundle isomorphism $R:(\overline{\nu}F,F_1)\to (\overline{\nu}F,F_1) $ that can be described as taking a fixed $S^1-$action on $D^2$ and extending fiberwise using the map $F\to S^1$ (see Section \ref{pf1}). 
\item We deduce that $H':=(H\circ R)$ is a bundle isomorphism that obeys the equality $H'^{\ast}\mathfrak{s}^{\overline\nu F}_{2}=\mathfrak{s}^{\overline\nu F}_{1}.$ Thus finally concluding with $f:=H'|_{\mathring{Y}}\cup id_{E_K}$ has the desired property that $f^{\ast}\s_2$ extends over $X_{F_1}.$ 
\end{enumerate}

\emph{In the $F$ characteristic case:}
\begin{enumerate}
    \item After defining the Rokhlin and Kirby-Taylor quadratic forms (see Definitions \ref{rokh} and \ref{defKT}) Corollary~\ref{OrderSPin} ensures that for a particular spin structure $\mathfrak{q}\in \Spin{\partial X_F},$ does not extend as a spin structure $\s_{\mathfrak{q}}\in \Spin(X_F)$ unless the Rokhlin and Kirby-Taylor quadratic forms with respect to $\mathfrak{q}$ agree.

    \item Next we use \cite[Proposition 6.10]{conway2025unknotting} to distinguish the elements in $\{\mathfrak{s}\in \Spin(\mathring{Y}):q_{KT}(\mathfrak{s})=q_{FK}\text{ on } F\}$ by whether $\mathfrak{s}$ restricted to an $S^1-$fiber extends over $D^2.$ 
    \item Lastly, for $\mathfrak{s}_2\in \Spin(\partial X_{F_2})$ restricted to $\mathfrak{s}^{\mathring{Y}}_1\in\Spin(\mathring{Y}),$ we show $q_{KT}((H|_{\mathring{Y}_1})^{\ast}\mathfrak{s}^{\mathring{Y}}_2)=q_{FK}\text{ on }F_1.$ Then we show $(H|_{\mathring{Y}_1})^{\ast}\mathfrak{s}^{\mathring{Y}}_2$ restricted to an $S^1-$fiber does not extend over the disc by the fact $H$ is a bundle isomorphism. Thus finally concluding with $f:=H|_{\mathring{Y}}\cup id_{E_K}$ has the desired property that $f^{\ast}\s_2$ extends over $X_{F_1}.$ 
\end{enumerate}
\begin{remark}
    We mention that much of this argument, inspired by \cite[Section 4]{Boyer1993}, is adapting work done in \cite[Section 6]{conway2025unknotting} to the orientable setting.
\end{remark}
\subsection{Kirby-Taylor quadratic refinements} 
We recall the definition of a quadratic refinement on an abstract surface given by \cite{kirby_taylor_pin} (see also Section 6 of \cite{conway2025unknotting}).
\begin{notation}
    In this section we use the notation $\nu _{A\subseteq B}$ for the normal vector bundle of a submanifold $A\subseteq B$ as opposed to $\nu A\subset B,$ which is an open tubular neighborhood. 
\end{notation}

    Let  $F$ be an abstract orientable surface, and $\zeta:=(E,F,\pi)$ be a trivial line bundle over $F.$ 
    Note $TF\oplus \zeta$ admits a spin structure as both $TF$ and $\zeta$ are spin thus by the Whitney sum formula $TF\oplus \zeta$ is spin. Fix such a spin structure $\mathfrak{s}$ on $TF\oplus \zeta$ and let $\gamma\subseteq F$ be an embedded circle. Make a choice of orientation on $\gamma.$ This choice determines a framing $T\gamma\cong \gamma \times \R.$

Now we describe two framings on the rank three vector bundle $TF|_{\gamma}\oplus \nu_{F\subset E}|_{\gamma}. $

\begin{itemize}
    \item 
Firstly, $\mathfrak{s}$ completely determines a framing given by the following identifications:
$$\mathcal{F}:TF|_{\gamma}\oplus \nu_{F\subseteq E}|_{\gamma}\cong TF|_{\gamma}\oplus \zeta|_{\gamma}\cong (TF\oplus \zeta)|_{\gamma}\cong \gamma \times \R^3$$
\item Secondly, we consider the framing $\mathcal{G}_f$ obtained by combining the orientation-induced framing $T\gamma\cong \gamma \times
\R$ with some additional choice of framing $f:\nu_{\gamma \subseteq E}\cong \gamma \times \R^2$ in the following way:
$$\mathcal{G}_f:TF|_{\gamma}\oplus \nu_{F\subseteq E}|_{\gamma}\cong T{\gamma}\oplus \nu_{\gamma \subseteq F}\oplus \nu_{F\subseteq E}|_{\gamma}\cong T\gamma \oplus \nu_{\gamma \subseteq E}\cong \gamma \times \R^3.$$
    \end{itemize}
\begin{definition}[\cite{kirby_taylor_pin}\cite{conway2025unknotting}]
A framing $f:\nu_{\gamma \subseteq E}\cong \gamma \times \R^2,$ is called \emph{odd} with respect to a fixed spin structure $\mathfrak{s},$ if $\mathcal{G}_f$ determines the same orientation on $\nu_{\gamma\subseteq E}$ as $\mathcal{F},$ but the framings disagree up to homotopy.   
\end{definition}

   \begin{definition}[\cite{kirby_taylor_pin}] \label{defKT}
    The \emph{Kirby-Taylor quadratic form} $q_{KT}(\mathfrak{s}):H_1(F;\Z_2)\to \Z_2$ is defined as $$q_{KT}(\mathfrak{s})(\gamma):=  \#
\left\{\begin{array}{cc}
 \text{right-handed full-twists made by }\nu_{\gamma\subseteq F}\subseteq\nu_{\gamma \subseteq E}
\\
 \text{as it completes a full rotation around}~\gamma \text{ in the positive direction}
 \end{array}\right\} \in\Z_2.$$ The Kirby-Taylor quadratic refinement depends neither on the choice of orientation on $\gamma,$ nor the particular choice of odd framing; see \cite[p. 209]{kirby_taylor_pin}. In practice we will take $\zeta:=\nu_{s(F)\subset \mathring{Y}}$ for $s:F\to \mathring{Y}$ a section. 
\end{definition}
\begin{remark}
    For the reader comparing with \cite{kirby_taylor_pin}, \cite{conway2025unknotting}, we note that both papers allow $F$ to be non-orientable and hence originally the Kirby-Taylor quadratic form was defined as a $\Z_4$ quadratic refinement over $\Z_2.$ Orientable surfaces have alternating $\Z_2$ intersection forms from which one deduces that $\Z_4-$quadratic refinements of the forms are $2\Z_4=\Z_2$ valued.
\end{remark}
We restate here Proposition 6.10 of \cite{conway2025unknotting} for convenience to the reader.
\begin{proposition}\cite[Proposition 6.10]{conway2025unknotting}
    Let $F,F_1,\text{ and } F_2$ be orientable surfaces with connected nonempty boundary.
    \begin{enumerate}
        \item Let $\xi$ be a rank-2 vector bundle over $F$ with orientable total space. Write $\mathring{Y}$ for the total space of the associated $S^1-$bundle $S(\xi).$ Fix a choice of section $s:F\to \mathring{Y},$ and a $S^1-$fibre $\mu\subset\mathring{Y}$ There is a bijection of sets \[
\begin{array}{rcl}
\Psi_{s,\mu}\colon\Spin(\mathring{Y})&\xrightarrow{1:1}& \Spin(T\mathring{Y}|_\mu)\times\left\{\begin{array}{cc}\text{$\Z_2$-quadratic refinements of}\\ \text{$(H_1(s(F);\Z_2),Q_{s(F)})$}\end{array}\right\}\\
\s&\mapsto& (\s|_{\mu},q_{KT}(\s)).
\end{array}
\]
To define $q_{KT}(\s)$, we take $\zeta := \nu_{s(F) \subseteq \mathring{Y}}$ and use the spin structure on $\mathring{Y}$ to induce a spin structure on $TF \oplus \nu_{s(F) \subseteq \mathring{Y}} \cong T\mathring{Y}|_{s(F)}$.

\item Let~$\Phi\colon\xi_1\cong \xi_2$ be an isomorphism of orientable 2-plane vector bundles with orientable total space, covering a homeomorphism~$\varphi \colon F_1 \to F_2$.
Write~$\mathring{Y}_i$ for the total space of the associated~$S^1$-bundles~$S(\xi_i)$. Fix a choice of section $s_1:F_1\to \mathring{Y}_1|_{F_1}$ and a choice of~$S^1$-fiber~$\mu_1 \subseteq \mathring{Y}_1$. Write $s_2=\Phi \circ s_1\circ \varphi^{-1}$ and $\mu_2=\Phi(\mu_1).$

Then there is a commutative diagram
\[
\begin{tikzcd}
\Spin(\mathring{Y}_2)\ar[rr, "\Psi_{\s_2,\mu_2}"]\ar[d, "{\Phi^*}"]
&&\Spin(T\mathring{Y}_2|_{\mu_2})\times \left\{\parbox{4.5cm}{\centering~$\Z_2$-quadratic refinements of\\~$(H_1(s(F_2);\Z_2),Q_{s(F_2)})$}\right\} \ar[d, "{\widehat{\Phi}}"] \\
\Spin(\mathring{Y}_1)\ar[rr, "\Psi_{\mathfrak{s}_1,\mu_1}"]
&&\Spin(T\mathring{Y}_1|_{\mu_1})\times \left\{\parbox{4.5cm}{\centering~$\Z_2$-quadratic refinements of\\~$(H_1(s(F_1);\Z_2),Q_{s(F_1)})$}\right\},
\end{tikzcd}
\]
where the~$\Psi_{s_i,\mu_i}$ are the bijections of sets from the first item, and

\[
{\widehat{\Phi}}(\mathfrak{t}, q):= ((\Phi|_{\mu_2})^*\mathfrak{t},  q \circ \Phi_*).
\]

    \end{enumerate}\label{COP}
\end{proposition}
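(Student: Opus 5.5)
The plan is to argue by torsors. First I will identify $\mathring{Y}$ with $F\times S^1$. As in Lemma~\ref{HomologyY}, $F$ has nonempty boundary, so $H^2(F)=0$ and $\xi$ is trivial. Hence $\Spin(\mathring{Y})$ is a torsor over $H^1(\mathring{Y};\Z_2)$, and by Künneth
\[
H^1(\mathring{Y};\Z_2)\cong H^1(s(F);\Z_2)\oplus H^1(\mu;\Z_2),
\]
where the isomorphism is given by restricting to the section $s(F)$ and to the fibre $\mu$. This holds because $H_1(\mathring{Y};\Z_2)$ is spanned by $s_*H_1(F;\Z_2)$ and $[\mu]$. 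The target of $\Psi_{s,\mu}$ is also a product of torsors:
\begin{itemize}
\item[(i)] $\Spin(T\mathring{Y}|_\mu)$ is a torsor over $H^1(\mu;\Z_2)\cong\Z_2$;
\item[(ii)] the set of $\Z_2$-quadratic refinements of $(H_1(s(F);\Z_2),Q_{s(F)})$ is a torsor over $H^1(s(F);\Z_2)$, since the difference of two refinements is linear and every linear form occurs as such a difference.
\end{itemize}
So it suffices to show that $\Psi_{s,\mu}$ is well defined and equivariant with respect to the isomorphism above. An equivariant map between torsors over isomorphic groups is automatically a bijection.

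For well-definedness, the restriction $\s|_\mu$ is clearly defined. For $q_{KT}(\s)$, I will use the spin structure on $T\mathring{Y}|_{s(F)}\cong TF\oplus\nu_{s(F)\subseteq\mathring{Y}}$ to define $q_{KT}$ on embedded circles as in Definition~\ref{defKT}. I will then quote \cite{kirby_taylor_pin} for three facts: the count is independent of the choice of odd framing and of orientation; it depends only on the $\Z_2$-homology class; and it satisfies $q(a+b)=q(a)+q(b)+a\cdot b$. This last point uses the usual band-sum or smoothing argument for two circles meeting transversely, where each double point contributes one extra twist. Together these show that $q_{KT}(\s)$ is a quadratic refinement.

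The key step is equivariance. Let $c\in H^1(\mathring{Y};\Z_2)$. On the fibre, $(c\cdot\s)|_\mu=c|_\mu\cdot\s|_\mu$ by naturality of restriction. On the section, changing the spin structure by $c$ changes the framing $\mathcal{F}$ of $T\mathring{Y}|_\gamma$ by the generator of $\pi_1(SO(3))=\Z_2$ exactly when $c(\gamma)=1$. Consequently the odd framings for $\s$ and for $c\cdot\s$ differ by one full twist along $\gamma$, which gives
\[
q_{KT}(c\cdot\s)(\gamma)=q_{KT}(\s)(\gamma)+c(s_*\gamma).
\]
This is precisely the statement that $\Psi_{s,\mu}$ is equivariant under the Künneth identification. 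I expect this identification of the twist count with the difference class, together with the well-definedness facts quoted above, to be the main obstacle; the remaining steps are formal.

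For the second item, I will use that every ingredient of $\Psi$ is transported by $\Phi$. Indeed, $\Phi$ maps $\mu_1$ onto $\mu_2$ and $s_1(F_1)$ onto $s_2(F_2)$, and its derivative identifies $T\mathring{Y}_1|_{s_1(F_1)}$ with $T\mathring{Y}_2|_{s_2(F_2)}$ compatibly with the splittings $TF\oplus\nu$. Therefore:
\begin{itemize}
\item[(a)] the pulled-back spin structure $\Phi^*\s$ induces the pulled-back framing $\mathcal{F}$;
\item[(b)] $\Phi$ carries odd framings for $\Phi^*\s$ along $\gamma$ to odd framings for $\s$ along $\Phi(\gamma)$, and preserves twist counts.
\end{itemize}
It follows that $q_{KT}(\Phi^*\s)=q_{KT}(\s)\circ\Phi_*$ and $(\Phi^*\s)|_{\mu_1}=(\Phi|_{\mu_1})^*(\s|_{\mu_2})$. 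This is exactly the commutativity of the diagram, with $\widehat{\Phi}$ as defined.
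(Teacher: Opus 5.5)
Your proposal is correct and follows essentially the same route as the paper. Both arguments rest on the splitting $H^1(\mathring{Y};\Z_2)\cong H^1(s(F);\Z_2)\oplus H^1(\mu;\Z_2)$ and on the Kirby--Taylor correspondence between spin structures on $TF\oplus\zeta$ and quadratic refinements, with (2) following by transport of structure. The only difference is packaging: the paper restricts to the 1-complex $A$ (a wedge of the loops $s(\gamma_i)$ and $\mu$) and cites \cite[Lemma 1.7]{kirby_taylor_pin} as a black box, whereas you prove the equivariance $q_{KT}(c\cdot\s)=q_{KT}(\s)+c|_{s(F)}$ directly, which makes your argument more self-contained.
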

\cite[Proposition 6.10]{conway2025unknotting} originally is only stated for non-orientable surfaces. The two changes needed to adapt their proof are as follows:
\begin{enumerate}
    \item Consider a standard cell structure for $F,$ with basis loops $\gamma_1,\cdots,\gamma_{2g}.$ Let $A\subseteq \mathring Y$ be a connected one complex comprised of a single 0-cell, the collection of loops $s(\gamma_i),$ and the $S^1$ fiber $\mu_1.$ We obtain a bijection from $\operatorname{Spin}(\mathring{Y})\to \operatorname{Spin}(T\mathring{Y}|_{A}),$ by taking the inclusion induced map on first homology and deducing by Lemma \ref{HomologyY} $H_1(A;\Z_2)\xrightarrow[]{\cong}H_1(\mathring{Y};\Z_2).$
    \item After applying \cite[Lemma 1.7]{kirby_taylor_pin} (see proof of \cite[Theorem 6.7]{conway2025unknotting})one arrives at a bijection $$\Psi_{s,\mu}:\operatorname{Spin}(Ts(F)\oplus\nu_{s(F)\subseteq\mathring{Y}})\xrightarrow{1:1}\left\{\parbox{4.5cm}{\centering~$\Z_4$-quadratic refinements of\\~$(H_1(s(F);\Z_2),Q_{s(F)})$}\right\}$$
\end{enumerate}
 which refines to $\Z_2$-quadratic refinements of $(H_1(s(F);\Z_2),Q_{s(F)})$ in the orientable case. The rest of the proof has no dependence on orientability of the starting surface.

\subsection{The Rokhlin quadratic refinement} Let $F\subseteq X$ be a properly embedded orientable characteristic surface with connected nonempty boundary. We also assume that $\pi_1(X) = 1.$ Let us review the Rokhlin quadratic refinement as given in \cite{freedman_kirby_rochlin} (see also \cite{klug2021relativeversionrochlinstheorem}):

Let $\gamma$ be an embedded circle in the interior of $F$ and let $D$ be an immersed disk bounded by $\gamma$ in $X$ such that $D$ is transverse to $F$ at $\gamma.$ Moreover, we assume the interior of $D$ meets $F$ transversely away from $\gamma.$ Choose an orientation on $\gamma$ and thus a positive tangent direction. Since the oriented normal bundle $\nu_{D\subseteq X}$ is trivial, choose a trivialization $\nu_{D\subseteq X}\cong D\times \R^2.$ Therefore, choose the framing on the normal bundle, $\nu_{D\subset X}|_{\gamma},$ for $D$ a properly embedded disk transverse to $F$ as above, such that the frame
\begin{center}
    (outward facing normal to $\gamma$ in $D)\times$(positive tangent to $\gamma\times e_1\times e_2$
\end{center}
 agrees with the ambient orientation of $X$ at all points $p\in \nu(\gamma).$
Define $\mathcal{O}(D)$ to be the number of right-handed full-twists made by this 1-dimensional subbundle $\nu_{\gamma\subseteq F}\subset\nu_{D\subseteq X}|_{\gamma}$ as it completes a full rotation around $\gamma$ in the positive direction. Now we define the Rokhlin quadratic form.

\begin{definition}[\cite{rokhlin_gudkov}] \label{rokh}   
The \emph{Rokhlin quadratic} form for a characteristic surface $F\subset X$ is $$q_{FK}:H_1(F;\Z_2)\to \Z_2 $$  $$[\gamma]\mapsto \mathcal{O}(D)+D\cdot F\pmod2.$$ 
\end{definition}

\subsection{The exterior Rokhlin form} In this section, we define the exterior Rokhlin form which will let us translate between the classical Rokhlin and Kirby-Taylor quadratic forms i.e., we define a way to compute the Rokhlin quadratic refinement of $F$ using disks whose interiors lie entirely in the complement of $F.$

Let $F\subseteq X$ be a surface with simply-connected complement. Pick a section $s:F\to \mathring{Y},$ and observe that for every $\gamma\subset F,\; s(\gamma)\subset X_F$ is null-homotopic in the exterior. Generally, we say a section is \emph{nice} if the composition $$H_1(F)\xrightarrow[]{s_{\ast}}H_1(\mathring{Y})\xrightarrow[]{i_{\mathring Y}}H_1(\partial X_F)\xrightarrow[]{i_{\partial X_F}}H_1(X_F)$$ is identically zero. So now we have existence of properly immersed disks $D\looparrowright X_F$ that are transverse to $s(F)$ along $\partial D=s(\gamma).$

\begin{definition}
    Fix nice section $s:F\to \mathring{Y}.$ Let $D\looparrowright X_F$ be a properly immersed disk transverse to $s(F)$ along $s(\gamma),$ for an embedded circle $\gamma\subset F.$ We now choose the framing of $\nu_{D\subseteq X_F}|_{\\\gamma}$ in the same way as before: 
     Fix an orientation on $\gamma$ and choose a framing on the normal bundle, $\nu _{D\subseteq X_F}|_{\gamma},$ for $D$ a properly immersed disk transverse to $F$ as above, such that the frame
\begin{center}
    (outward facing normal to $\gamma$ in $D)\times$(positive tangent to $\gamma)\times e_1\times e_2$
\end{center}
agrees with the ambient orientation of $X_F$ at all points $p\in \gamma.$ The \emph{exterior Rokhlin quadratic} form is defined as:
$$\hat{q}_{FK}(\gamma):=\mathcal{O}(D).$$
As there is a basis of $H_1(s(F);\Z_2)$ by oriented embedded loops, this extends linearly to a function $\hat{q}_{FK}.$ The function is well-defined by a similar argument to why the Rokhlin function is well-defined.
\end{definition}
 Now we relate the Rokhlin and exterior Rokhlin forms by appealing to the proof of \cite[Proposition 6.15]{conway2025unknotting} and the exterior Rokhlin to the Kirby-Taylor quadratic form by appealing to the proof of  \cite[Corollary 6.19]{conway2025unknotting} since both arguments are identical in the orientable case.
\begin{proposition}
    Let $F\subset X$ be a characteristic surface such that $\pi_1(X\setminus F)=1,$ and fix a nice section $s$ of $\mathring{Y}.$ The exterior Rokhlin quadratic form is a $\Z_2-$quadratic refinement of the $\Z_2-$intersection form $(H_1(s(F);\Z_2),Q_{s(F)});$ the isomorphism $s_{\ast}:H_1(F;\Z_2)\to H_1(s(F);\Z_2)$ is an isometry of the quadratic forms 
    $$(Q_F,q_{FK})\xrightarrow[]{\cong}(Q_{s(F)},\hat{q}_{FK}).$$\label{ext}
\end{proposition}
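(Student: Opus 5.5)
The plan is to compare the two quadratic functions loop by loop, using disks that realize both. Fix an oriented embedded circle $\gamma\subset F$ in the interior. Since $s$ is nice and $\pi_1(X_F)=1$, the pushed-off loop $s(\gamma)\subset\partial X_F$ bounds a properly immersed disk $D'\looparrowright X_F$, transverse to $s(F)$ along $s(\gamma)$. We extend $D'$ by the annulus $A\subset\overline{\nu}F|_\gamma$ swept out by the radial segments from $\gamma$ to $s(\gamma)$. The result is an immersed disk $D=D'\cup A$ in $X$ with $\partial D=\gamma$. Its interior misses $F$, so $D\cdot F=0$. After smoothing the corner along $s(\gamma)$, $D$ is transverse to $F$ along $\gamma$, because $A$ leaves $F$ in the normal direction $s$. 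Consequently $q_{FK}([\gamma])=\mathcal{O}(D)\pmod 2$, and the claim reduces to showing $\mathcal{O}(D)\equiv\mathcal{O}(D')$.

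To compare the twisting numbers, we transport framings across the annulus $A$. On $A\cong\gamma\times[0,1]$ the normal bundle $\nu_{A\subseteq X}$ is spanned by two directions. The first is the tangent direction of $F$ normal to $\gamma$, i.e.\ $\nu_{\gamma\subseteq F}$, translated radially. The second is the remaining fibre direction of $\overline{\nu}F$, orthogonal to $s$. Because $\overline{\nu}F\cong F\times D^2$ is trivial over $\gamma$, a trivialization of $\nu_{D\subseteq X}$ restricts on $A$ to a product framing. Hence the number of twists of $\nu_{\gamma\subseteq F}$ inside $\nu_{D}|_\gamma$ equals the number of twists of its radial translate $\nu_{s(\gamma)\subseteq s(F)}$ inside $\nu_{D'}|_{s(\gamma)}$. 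Here we use that $s(F)$ is the parallel copy of $F$, so $\nu_{s(\gamma)\subseteq s(F)}$ corresponds to $\nu_{\gamma\subseteq F}$ under the radial identification. We also check that the orientation convention (outward normal, positive tangent, $e_1$, $e_2$) is preserved: the outward normal of $\gamma$ in $D$ is, up to homotopy, the outward normal of $s(\gamma)$ in $D'$ translated along $A$. This gives $\mathcal{O}(D)=\mathcal{O}(D')=\hat q_{FK}(s(\gamma))$. The parity of these counts is all that matters, since a sign convention could at worst flip an integer.

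Next we handle linearity and the isometry statement. By Freedman--Kirby, $q_{FK}$ is a well-defined $\Z_2$ quadratic refinement of $Q_F$; we use that $F$ is characteristic here. Since $s$ is a section of the trivial bundle $\mathring{Y}\cong F\times S^1$, the map $s_\ast$ is an isomorphism, and $s:F\to s(F)$ is a homeomorphism, so $Q_{s(F)}(s_\ast a,s_\ast b)=Q_F(a,b)$. The previous paragraph shows $\hat q_{FK}\circ s_\ast=q_{FK}$ on classes of embedded circles. Such classes generate $H_1(F;\Z_2)$, and $\hat q_{FK}$ is defined by extension from these loops, so this identity pins down $\hat q_{FK}$ entirely. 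It follows that $\hat q_{FK}$ satisfies $\hat q(x+y)=\hat q(x)+\hat q(y)+Q_{s(F)}(x,y)$, i.e.\ it is a quadratic refinement, and that $s_\ast$ is an isometry of the quadratic forms.

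The main obstacle is the framing comparison in the second paragraph. Two issues need care there. First, we must confirm that smoothing the corner of $D$ along $s(\gamma)$ introduces no extra twist; I would handle this by choosing $D'$ to meet $\partial X_F$ orthogonally in a collar, so that $A\cup D'$ is $C^1$ after a standard rounding. Second, we must confirm that the trivialization of $\nu_D$ chosen in the definition of $\mathcal{O}(D)$ restricts compatibly to the one used for $D'$. This holds because trivializations of $\nu_D$ over a disk are unique up to homotopy, and $D'$ is a subdisk of $D$.
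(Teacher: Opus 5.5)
Your proposal is correct and follows the argument the paper relies on, which it takes from the proof of Proposition~6.15 in \cite{conway2025unknotting}: cap an exterior membrane $D'$ for $s(\gamma)$ with the radial annulus in $\overline{\nu}F|_{\gamma}$ to get a membrane $D$ for $\gamma$ with $D\cdot F=0$ and $\mathcal{O}(D)=\mathcal{O}(D')$, hence $\hat q_{FK}\circ s_\ast=q_{FK}$. In your last step, the sound justification is that the framing comparison applies to every embedded circle and every class in $H_1(F;\Z_2)$ is represented by one, so $\hat q_{FK}$ inherits well-definedness and quadraticity from $q_{FK}$; knowing that embedded classes merely generate is not enough, and a literal ``linear'' extension from a basis, as the paper's wording suggests, could never be quadratic.
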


\begin{proposition}\label{lem:optimistic}
Let $F\subset X$ be a surface such that $\pi_1(X\setminus F)=1.$  Choose a spin structure~$\mathfrak{s}$ for~$\partial X_F$ that extends to a spin structure on~$X_F$ and choose a section $s:F\to \mathring{Y}.$
Then there is an equality of the ~$\Z_2$-quadratic forms
\[(H_1(s(F);\Z_2),Q_{s(F)},q_{FK}) = ((H_1( F;\Z_2),Q_{F},\hat{q}_{KT}(\mathfrak{s}))\]
\end{proposition}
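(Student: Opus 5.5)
The statement is garbled in its labels: it reads $q_{FK}$ on $s(F)$ and $\hat q_{KT}$ on $F$. I interpret it as the identity the surrounding outline needs: for a nice section $s$ and a spin structure $\mathfrak{s}$ on $\partial X_F$ that extends over $X_F$, the exterior Rokhlin form $\hat q_{FK}$ on $(H_1(s(F);\Z_2),Q_{s(F)})$ equals the Kirby--Taylor form $q_{KT}(\mathfrak{s}|_{\mathring Y})$, transported to $F$ via $s_\ast$. Combined with Proposition~\ref{ext}, this gives $q_{KT}(\mathfrak{s}|_{\mathring{Y}})\circ s_\ast=q_{FK}$ on $H_1(F;\Z_2)$. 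Following the proof of \cite[Corollary 6.19]{conway2025unknotting}, the plan is to compare the two forms loop by loop on embedded circles $\gamma\subset F$.

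The key point is that both forms count twisting of the same line field against framings that agree. Since $H_1(F;\Z_2)$ has a basis of embedded loops and both forms are quadratic refinements of the same intersection form, it suffices to check equality on these loops. This works because two refinements of the same form differ by a linear functional.

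\begin{enumerate}
\item Fix an embedded loop $\gamma$. Since $\pi_1(X_F)=1$, the curve $s(\gamma)$ bounds a properly immersed disk $D\looparrowright X_F$ transverse to $s(F)$ along $s(\gamma)$. We may take $D$ to meet $\partial X_F$ only in a collar of its boundary. Let $\mathfrak{s}_X$ be a spin structure on $X_F$ extending $\mathfrak{s}$.
\item Since $D$ is contractible, $\mathfrak{s}_X|_D$ is the unique spin structure on $TX_F|_D$. Along $\partial D=s(\gamma)$, the induced framing of $TX_F|_{s(\gamma)}$ is the bounding one. We write it as
\[
(\text{outward normal in }D)\oplus T\gamma\oplus \nu_{D\subseteq X_F}|_{\gamma},
\]
using a trivialization of $\nu_{D\subseteq X_F}$ that extends over $D$; this is exactly the framing used to define $\mathcal O(D)$.
\item On the boundary, $TX_F|_{\mathring Y}=T\mathring Y\oplus\varepsilon$, where $\varepsilon$ is spanned by the outward normal, and $\mathfrak{s}=\mathfrak{s}_X|_{\partial X_F}$. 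Hence the framing $\mathcal F$ of $(Ts(F)\oplus\nu_{s(F)\subseteq\mathring Y})|_{s(\gamma)}$ defined by $\mathfrak{s}$, stabilized by the outward normal, is the restriction of the spin framing from step 2.
\item Translate the normal directions. The inward normal of $\partial X_F$ corresponds to the inward normal of $\gamma$ in $D$, and $\nu_{\gamma\subseteq s(F)}$ together with the normal of $s(F)$ in $\mathring Y$ spans a complement. Under this identification, the framing $\nu_{D\subseteq X_F}|_\gamma$ and the odd framing $\mathcal{G}_f$ differ exactly by the spin (non-bounding versus bounding) twist.
\item Conclude that the count of full twists of $\nu_{\gamma\subseteq s(F)}$ in $\nu_{D}|_\gamma$ equals, mod $2$, the count of its twists in $\nu_{s(\gamma)\subseteq\mathring{Y}\times I}$ relative to an odd framing. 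That is, $\mathcal O(D)=q_{KT}(\mathfrak{s})(\gamma)$.
\end{enumerate}

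A subtlety is that $\mathcal O(D)$ might depend on $D$. It does not: two choices differ by a sphere in $X_F$, and since $X_F$ is spin the twisting changes by an even amount. Alternatively, well-definedness of $\hat q_{FK}$ can be assumed from the paper.

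The main obstacle is the orientation and sign bookkeeping in steps 3 and 4. One must check that the convention "outward normal, tangent, $e_1$, $e_2$" matches the convention in $\mathcal{G}_f$, and that "odd" framings correspond exactly to framings of $\nu_{D}|_\gamma$ extending over $D$. A $3$-dimensional framing of $\gamma$ extends over the disk after stabilization precisely when it is the bounding spin framing, and odd framings were defined to disagree with $\mathcal{F}$ so as to make this exact. I would handle this first in a local model, $D^2\times\R^2$ near $s(\gamma)$, and verify it on an unknotted loop in $S^1\times D^2\subset\R^4$.
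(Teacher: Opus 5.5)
Your reading of the garbled statement (for $\mathfrak{s}$ extending over $X_F$, the exterior Rokhlin form $\hat q_{FK}$ equals $q_{KT}(\mathfrak{s})$) is the one Corollary~\ref{OrderSPin} uses. Your strategy of restricting a spin structure on $X_F$ to a null-disc $D$ for $s(\gamma)$ is the argument of \cite[Corollary 6.19]{conway2025unknotting}, which the paper cites without further detail. However, the one step with real content, the parity comparison, is wrong as written.

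In Step~2 you claim that the framing induced by $\mathfrak{s}_X|_D$ is $(n_{\mathrm{out}},t,f_D)$, where $f_D$ is the restriction to $\gamma$ of a trivialization of $\nu_{D\subseteq X_F}$. It is not. The $2$-frame $(n_{\mathrm{out}},t)$ of $TD|_{\partial D}$ has rotation number $1$ relative to a trivialization of $TD$. Hence $(n_{\mathrm{out}},t,f_D)$ differs from a framing that extends over $D$ by the generator of $\pi_1(SO(4))\cong\Z_2$. Combined with your (correct) Step~3, Step~2 would give $\mathcal{G}_{f_D}=\mathcal{F}$, i.e.\ $f_D$ would be \emph{even}, and then $\mathcal{O}(D)=q_{KT}(\mathfrak{s})(\gamma)+1$. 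Step~4 does not repair this: it compares a $2$-framing with a $3$-framing, and read literally (``$f_D$ differs from an odd framing by one twist'') it again makes $f_D$ even. Step~5 then asserts equality without justification. You flag this bookkeeping as the main obstacle, but it is the entire content of the proposition, so it cannot be deferred to a later local check.

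\textbf{The repair.} Using the collar, $\nu_{D\subseteq X_F}|_\gamma=\nu_{s(\gamma)\subseteq\mathring Y}$. So $f_D$ is a framing of exactly the bundle in Kirby--Taylor's definition, and $\mathcal{O}(D)$ is the twisting of $\nu_{\gamma\subseteq s(F)}$ against $f_D$. (In Step~5 the relevant bundle is $\nu_{s(\gamma)\subseteq\mathring Y}$, not $\nu_{s(\gamma)\subseteq\mathring Y\times I}$.)

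Stabilize by the outward normal of $\partial X_F$, which is the outward normal of $\gamma$ in $D$. This turns $\mathcal{G}_{f_D}$ into $(n_{\mathrm{out}},t,f_D)$. By the rotation-number computation above, this framing is \emph{not} $\mathfrak{s}_X$-compatible, whereas the stabilization of $\mathcal{F}$ is. Since $\pi_1(SO(3))\to\pi_1(SO(4))$ is an isomorphism, $\mathcal{G}_{f_D}\neq\mathcal{F}$. After matching orientation conventions (a constant change of basis affects neither classes in $\pi_1(SO(4))$ nor the parity of the twist count), $f_D$ is odd. Hence $\mathcal{O}(D)=q_{KT}(\mathfrak{s})(\gamma)$ by definition.

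\textbf{A sanity check.} Suppose $\gamma$ bounds a disc $\Delta\subset s(F)$, and push $\Delta$ into the collar. Then $f_D=(n_\Delta,\zeta)$ up to sign, so $\mathcal{O}(D)=0=q_{KT}(\mathfrak{s})(\gamma)$. This is consistent with $f_D$ odd and inconsistent with your Step~2.

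Finally, once $\mathcal{O}(D)=q_{KT}(\mathfrak{s})(\gamma)$ is proved for every such $D$, independence of $\mathcal{O}(D)$ from $D$ follows automatically. Your separate sphere argument is therefore unnecessary.
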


Now we give a bound on the number of spin structures over $\mathring{Y}$ that extend over $X_F.$
\begin{remark}
    Given embedded submanifold $A\subset B,$ we use the notation $\Spin(B,A)$ to denote the set $\{\mathfrak{s}\in \Spin(A):\mathfrak{s}\text{ extends over } B\}.$
\end{remark}
\begin{corollary}\label{OrderSPin} 
Fix a section $s:F\to \mathring{Y}.$ The set $\Spin(X_F,\mathring{Y}):=\{\mathfrak{s}\in \Spin(\mathring {Y}):\mathfrak{s}\text{ extends over } X_F\},$ is contained in $\{\mathfrak{s}\in \Spin(\mathring{Y}):q_{KT}(\mathfrak{s})=\hat{q}_{FK}\text{ on }H_1(F;\Z_2)\}.$ Moreover when $X_F$ is spin,  $$1\leq |\Spin(X_F,\mathring{Y})|\leq |\{\mathfrak{s}\in \Spin(\mathring{Y}):q_{KT}(\mathfrak{s})=\hat{q}_{FK}\text{ on }F\}|= 2.  $$
\end{corollary}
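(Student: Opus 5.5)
The plan has three parts. First, the containment will follow from Proposition~\ref{lem:optimistic}. Next, the equality $|\{\cdots\}|=2$ will follow from the bijection $\Psi_{s,\mu}$ of Proposition~\ref{COP}. Finally, the lower bound will come from $X_F$ being spin.

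\textbf{Containment.} Let $\mathfrak{s}\in\Spin(X_F,\mathring{Y})$, so $\mathfrak{s}=\mathfrak{t}|_{\mathring{Y}}$ for some $\mathfrak{t}\in\Spin(X_F)$. Restrict $\mathfrak{t}$ to $\partial X_F$ to get a spin structure on $\partial X_F$ that extends over $X_F$. Proposition~\ref{lem:optimistic} then says that $q_{KT}$ of this structure agrees with the (exterior) Rokhlin form on $H_1(s(F);\Z_2)\cong H_1(F;\Z_2)$. For this step I would pick $s$ to be a nice section, and identify $\hat q_{FK}$ with $q_{FK}$ via Proposition~\ref{ext}. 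The quantity $q_{KT}$ is computed only from the restriction of the spin structure to $T\mathring{Y}|_{s(F)}$, so it depends only on $\mathfrak{s}=\mathfrak{t}|_{\mathring{Y}}$. Hence $q_{KT}(\mathfrak{s})=\hat q_{FK}$, which gives the containment.

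\textbf{The count.} Under $\Psi_{s,\mu}$, the set $\{\mathfrak{s}: q_{KT}(\mathfrak{s})=\hat q_{FK}\}$ corresponds bijectively to $\Spin(T\mathring{Y}|_\mu)\times\{\hat q_{FK}\}$. Here $\hat q_{FK}$ is a genuine $\Z_2$-quadratic refinement of $Q_{s(F)}$ by Proposition~\ref{ext}, so this fiber of $\Psi_{s,\mu}$ is nonempty. Spin structures on the trivializable bundle $T\mathring{Y}|_\mu$ over a circle form an $H^1(S^1;\Z_2)\cong\Z_2$-torsor, so there are exactly two. Thus the right-hand set has cardinality exactly $2$.

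\textbf{Lower bound.} When $X_F$ is spin, it carries some spin structure $\mathfrak{t}$, and $\mathfrak{t}|_{\mathring{Y}}$ lies in $\Spin(X_F,\mathring{Y})$. So the set is nonempty and $1\le|\Spin(X_F,\mathring{Y})|$. The middle inequality is just the containment already proved.

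\textbf{Main obstacle.} The subtle point is the mismatch of hypotheses. Proposition~\ref{lem:optimistic} is phrased for an arbitrary section, whereas $\hat q_{FK}$ is defined using a nice section. Also, in the non-characteristic (spin $X$) case, $q_{FK}$ is not classically defined, but $\hat q_{FK}$ still makes sense as the exterior quantity $\mathcal{O}(D)$. I would handle this by fixing $s$ nice throughout and reading the proof of Proposition~\ref{lem:optimistic} (following \cite[Corollary 6.19]{conway2025unknotting}) as directly comparing $q_{KT}(\mathfrak{s})$ with $\mathcal{O}(D)$ for disks $D\looparrowright X_F$. That comparison only uses that $\mathfrak{s}$ extends over $D$, not that $F$ is characteristic. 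For the count in the non-characteristic case, I would verify the quadratic property of $\hat q_{FK}$ directly from that same comparison, namely that it equals $q_{KT}$ of an extending structure. This avoids relying on Proposition~\ref{ext} there.
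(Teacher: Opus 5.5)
Your proposal is correct and follows the paper's proof: the containment comes from Proposition~\ref{lem:optimistic}, the count from the bijection $\Psi_{s,\mu}$ of Proposition~\ref{COP}, and the lower bound from $X_F$ being spin. Your justification of the equality $=2$ (the fiber $\Spin(T\mathring{Y}|_\mu)\times\{\hat q_{FK}\}$ is a nonempty $\Z_2$-torsor) is more careful than the paper's, which only argues ``at most two''; also, your worry about nice sections is moot, because $H_1(X_F)=0$ makes every section nice.
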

\begin{proof}
The containment follows from Proposition~\ref{lem:optimistic}: any spin structure on $\mathring Y$ extending over $X_F$ has Kirby-Taylor form equal to the exterior Rokhlin form.

Now from \cite[Proposition 6.10 (1)]{conway2025unknotting} (see Proposition \ref{COP}) the following map is a bijection  \[
\begin{array}{rcl}
\Psi_{s,\mu}\colon\Spin(\mathring{Y})&\xrightarrow{1:1}& \Spin(T\mathring{Y}|_\mu)\times\left\{\begin{array}{cc}\text{$\Z_2$-quadratic refinements of}\\ \text{$(H_1(F;\Z_2),Q_{F})$}\end{array}\right\}\\
\s&\mapsto& (\s|_{\mu},q_{KT}(\s)).
\end{array}
\] For the right-hand inequality, since we restricted to $\mathfrak{s}\in \Spin(X_F,\mathring{Y}),$ this tells us there is a unique $\Z_2-$refinement, and there are at most two spin structures $\mathfrak{s}\in \Spin(X_F,\mathring{Y}),$ with $q_{KT}(\mathfrak{s})=\hat{q}_{FK}$ distinguished by the value of the restriction of $\mathfrak{s}$ to the meridian of $F.$ Finally, $X_F$ being spin gives the left-hand side inequality. 
\end{proof}

\subsection{The Proof of Theorem~\ref{SpinU}}\label{pf1}
Now we prove the main result of this section. Let $F_1,F_2\subseteq X$ be two homologous, genus $g$ surfaces, with the same non-empty boundary $K\subset S^3,$ such that $\pi_1(X_{F_i})=1.$ 

\subsubsection{The proof when X is Spin} We adapt an argument from \cite[Proof of Theorem F and part (i) of Theorem G]{Boyer1993} to verify the result. Suppose $X$ is spin. Let $\mathfrak{s}\in \Spin(X)$ be the unique spin structure on $X,$ and  $\mathfrak{s}_i\in\Spin(X_{F_i})$ be the restrictions of $\mathfrak{s}.$ Moreover, restrict $\mathfrak{s}$ to $\mathfrak{s}^{\overline{\nu}F}_i\in \Spin(\overline{\nu}F_i).$

It need not be the case that $H^{\ast}\mathfrak{s}^{\overline{\nu}F}_2=\mathfrak{s}^{\overline{\nu}F}_1,$ but this may be corrected without altering $H|_{F_1}.$ Recall $H^{1}(\overline{\nu}F_1;\Z_2)$ acts freely and transitively on $\Spin(\overline{\nu}F_1),$ so there exists an element $y\in H^{1}(\overline{\nu}F_1;\Z_2) $ such that $y\cdot H^{\ast}\mathfrak{s}^{\overline{\nu}F}_2=\mathfrak{s}^{\overline{\nu}F}_1.$ Let $p:\overline{\nu}F_1\to F_1$ be the standard projection map and fix a class $\tilde{y}\in H^1(F_1;\Z)$ such that $p^{\ast}\tilde{y}=y\;(mod\;2).$ 

Choose a map $\gamma:F_1\to S^1$ representing $\tilde y\in H^1(F_1;\mathbb Z)$ under the standard identification $H^1(F_1;\mathbb Z)\cong [F_1,S^1]$.
Define $R:(\overline\nu{F_1},F_1)\to (\overline\nu{F_1},F_1),$ a bundle isomorphism, fiberwise by
\[
R(z)=\gamma(p(z))\cdot z,
\]
where $\gamma(p(z))\in S^1$ acts by rotation on the disk fiber of $\overline\nu F_1$ over $p(z)$.

Note that $R|_{\mathring{Y}_1}$ and $R|_{F_1}$ the 0-section both are equivalent to the identity on $\mathring{Y}_1$ and $F_1$ respectively.  After precomposing $H$ with $R$ we have the following equalities:$$(H\circ R)^{\ast}\mathfrak{s}^{\overline{\nu}F}_2=R^{\ast}(H^{\ast}\mathfrak{s}^{\overline{\nu}F}_2)=y\cdot (H^{\ast}\mathfrak{s}^{\overline{\nu}F}_2)=\mathfrak{s}^{\overline{\nu}F}_1.$$ Now define $H'=(H\circ R),$ and $f:=H'|_{\mathring{Y}_1}\cup id_{E_K.}$ Since both $\mathfrak{s}^{\partial}_1$ and $f^{\ast}\mathfrak{s}^{\partial}_2$ arise as restrictions of the spin structure $\mathfrak{s}$ on $X$, their restrictions to $E_K$ agree. Moreover, $H'$ was chosen so that $(H')^{\ast}\mathfrak{s}^{\overline{\nu}F}_2=\mathfrak{s}^{\overline{\nu}F}_1$, and hence the induced spin structures agree on $\mathring{Y}$. Therefore $\mathfrak{s}^{\partial}_1$ and $f^{\ast}\mathfrak{s}^{\partial}_2$ agree on both pieces of the decomposition $\partial X_{F_1}=E_K\cup \mathring{Y}_1$, and so
\[
f^{\ast}\mathfrak{s}^{\partial}_2=\mathfrak{s}^{\partial}_1.
\]

As $\mathfrak{s}^{\partial}_i$ was defined as the restriction of $\mathfrak{s}$ to $\partial X_{F_i},$ we conclude that $\mathfrak{s}^{\partial}_1$ extends over $X_{F_1},$ completing the proof of the case when $X$ is spin.

\subsubsection{The proof when $F$ is characteristic}
Since $F_i\subset X$ are characteristic and $\pi_1(X_{F_i})=1$ that implies $X$ is nonspin. Additionally, suppose $H|_{F_1}$ preserves the Rokhlin quadratic form. Consider the unique spin structures $\mathfrak{s}_i\in \Spin(X_{F_i})$ for $i=1,2.$ Restrict to $\mathfrak{s}^{\mathring{Y}}_i\in\Spin({\mathring{Y}_i}).$ Observe that $\mathfrak{s}^{\mathring{Y}}_i\notin\Spin(\overline{\nu}F_i,\mathring{Y}_i),$ as if so then $X:=X_{F_i}\cup\overline{\nu}F_i$ would be spin, a contradiction. 
Now consider the following claim.
\begin{claim}
Let $F\subset X$ be a surface with $\pi_1(X_F)=1,$ and $s:F\to \mathring{Y}$ be a section. For $\mathfrak{q}\in \Spin(\mathring{Y}),$ if the Kirby-Taylor quadratic form associated to $\mathfrak{q,\;}q_{KT}(\mathfrak{q})$ agrees with the Rokhlin quadratic form associated to $F\subset X,\;\hat{q}_{FK},$ and $\mathfrak{q}$ restricted to an $S^1-$fiber of $\mathring{Y}$ does not extend over the disk, then $\mathfrak{q}\in \Spin(X_F,\mathring{Y}).$
\end{claim}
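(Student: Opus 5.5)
The plan is a counting argument that uses Corollary~\ref{OrderSPin} together with the bijection $\Psi_{s,\mu}$ of Proposition~\ref{COP}(1). Throughout I assume, as in the surrounding proof, that $F$ is characteristic, so that $X_F$ is spin. The aim is to pin down the two candidate spin structures on $\mathring{Y}$ and show that the one with the prescribed fibre behaviour is exactly the restriction of the spin structure on $X_F$.

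First, since $\pi_1(X_F)=1$ and $X_F$ is spin, $X_F$ carries a unique spin structure $\mathfrak{s}_F$. Let $\mathfrak{s}^{\mathring{Y}}$ denote its restriction to $\mathring{Y}$. By Corollary~\ref{OrderSPin}, every element of $\Spin(X_F,\mathring{Y})$ has Kirby--Taylor form equal to $\hat{q}_{FK}$. Through $\Psi_{s,\mu}$, the set $\{\mathfrak{t}\in\Spin(\mathring{Y}) : q_{KT}(\mathfrak{t})=\hat{q}_{FK}\}$ has exactly two elements, and these are distinguished by their restriction to the fibre $\mu$. One of the two restrictions extends over the normal disk and the other does not. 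Since $\mathfrak{s}^{\mathring{Y}}$ lies in this two-element set, it remains only to identify which of the two it is.

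Second, I will show that $\mathfrak{s}^{\mathring{Y}}|_\mu$ does not extend over the normal disk $D^2_\mu$. Suppose it did. The spin structure on $\overline{\nu}F\simeq F$ is determined by its values on a $1$-skeleton, and a spin structure on $\mathring{Y}\cong F\times S^1$ whose fibre restriction bounds is the restriction of a spin structure on $F\times D^2$. Concretely, $\Spin(\overline{\nu}F,\mathring{Y})$ is the image of the restriction map, and under $\Psi_{s,\mu}$ it consists of all pairs whose first coordinate is the bounding structure on $\mu$; its cardinality is $|H^1(F;\Z_2)|$, which matches the number of quadratic refinements. It follows that $\mathfrak{s}^{\mathring{Y}}$ would extend over $\overline{\nu}F$. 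Gluing would then give a spin structure on $X=X_F\cup\overline{\nu}F$. The gluing along $E_K$ poses no problem, because $\overline{\nu}F\cap X_F=\mathring{Y}$ and the union is taken along $\mathring{Y}$ only. This contradicts the fact that $X$ is non-spin when $F$ is characteristic and $\pi_1(X_F)=1$, as noted just before the claim. Hence $\mathfrak{s}^{\mathring{Y}}$ is the element of the two-element set whose fibre restriction does not bound.

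Third, take $\mathfrak{q}$ as in the claim. It has $q_{KT}(\mathfrak{q})=\hat{q}_{FK}$ and a non-extending fibre restriction, so $\Psi_{s,\mu}(\mathfrak{q})=\Psi_{s,\mu}(\mathfrak{s}^{\mathring{Y}})$. Injectivity of $\Psi_{s,\mu}$ then gives $\mathfrak{q}=\mathfrak{s}^{\mathring{Y}}$, which extends over $X_F$, so $\mathfrak{q}\in\Spin(X_F,\mathring{Y})$.

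The main obstacle is the second step: justifying that a spin structure on $\mathring{Y}$ extends over $\overline{\nu}F$ precisely when its fibre restriction bounds. I would argue this via the $1$-complex $A$ built from the loops $s(\gamma_i)$ and $\mu$, used in the adaptation of Proposition~\ref{COP}. The loops $s(\gamma_i)$ impose no obstruction, since $\overline{\nu}F$ retracts onto $F$ and $H^1(\overline{\nu}F;\Z_2)\to H^1(\mathring{Y};\Z_2)$ is onto the $F$-factor. The only obstruction is then the $2$-cell $D^2_\mu$ attached along $\mu$.
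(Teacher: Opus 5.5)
Your proposal is correct and follows essentially the same argument as the paper. The set of spin structures on $\mathring{Y}$ with $q_{KT}=\hat{q}_{FK}$ has two elements, distinguished by their restriction to the fibre $\mu$, and it contains the restriction of the unique spin structure on $X_F$. That restriction cannot have bounding fibre restriction, since it would then extend over $\overline{\nu}F$ and make $X$ spin. The only difference is that you spell out, via the count $|\Spin(\overline{\nu}F,\mathring{Y})|=|H^1(F;\Z_2)|$ (using injectivity of restriction), why a bounding fibre restriction forces extension over $\overline{\nu}F$; the paper leaves this step implicit.
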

\begin{proof}[Proof of Claim]
    Consider the set $\{\mathfrak{s}\in \Spin(\mathring{Y}):q_{KT}(\mathfrak{s})=\hat{q}_{FK}\text{ on }H_1(F;\Z_2)\}.$ By Corollary~\ref{OrderSPin} the set has at most 2 elements, and contains $\Spin(X_F,\mathring{Y}).$ By \cite{conway2025unknotting}[Proposition 6.10(i)] (see Proposition \ref{COP}) we have that if $\{\mathfrak{s}\in \Spin(\mathring{Y}):q_{KT}(\mathfrak{s})=\hat{q}_{FK}\text{ on }H_1(F;\Z_2)\}$ realizes two elements then they are distinguished by their restriction to an $S^1-$fiber of $\mathring{Y},$ i.e., whether $\mathfrak{s}|_{\mu}$ extends as a spin structure over the disk. Since $X$ is nonspin, but $X_F$ is spin it must be the case that $\mathfrak{s}|_{\mu}$ does not extend over the disk, hence if $q_{KT}(\mathfrak{s})=\hat{q}_{FK}$ and $\mathfrak{s}|_{\mu}$ does not extend over the disk then $\mathfrak{s}\in\{\mathfrak{s}\in \Spin(\mathring{Y}):q_{KT}(\mathfrak{s})=\hat{q}_{FK}\text{ on }H_1(F;\Z_2)\} $ belongs to the subset $\Spin(X_F,\mathring{Y}).$
\end{proof}
\begin{proof}[Proof of Case 2] 
Fix $\mu_2\subset \mathring{Y}_2,$ and define $\mu_1:=(H|_{\mathring{Y}_1})^{-1}(\mu_2),$ and $\mathfrak{g}\in \Spin(\mathring{Y}_1)$ as $\mathfrak{g:=}(H|_{\mathring{Y}_1})^{\ast}\mathfrak{s}^{\mathring{Y}}_2.$ Additionally, fix a section $s_2:F_2\to\mathring{Y}_2,$ and set $s_1:=H^{-1}\circ s_2 \circ H|_{F_1}:F_1\to \mathring{Y}_1.$ Note the equality $H\circ s_1=s_2\circ H|_{F_1}.$

Since $\mathfrak{s}^{\mathring Y}_2 \notin \Spin(\overline\nu F_2, \mathring Y_2)$, its restriction $\mathfrak{s}^{\mathring Y}_2\vert{\mu_2}$ to a fiber does not extend over the disk. Because $H$ is a bundle isomorphism, the analogous statement holds for $\mathfrak{g}\vert{\mu_1}$.

To conclude the proof assuming the claim we need to show $q_{KT}(\mathfrak{g})=\hat{q}_{FK_1}$ for $\hat{q}_{FK_i}$ the exterior Rokhlin quadratic form associated to $F_i\subset X\;(\operatorname{for}\;i=1,2).$ Now we invoke the following equalities: 
\begin{align*}
q_{KT}(\mathfrak{g})&=q_{KT}(H^{\ast}\mathfrak{s}^{\mathring{Y}}_2)\text{ by definition}\\&=q_{KT}(\mathfrak{s}^{\mathring{Y}}_2)\circ H_{\ast}\text{ by \cite[Proposition 6.10(2)]{conway2025unknotting}}\text{ and } H\circ s_1=s_2\circ H|_{F_1}\\&=\hat{q}_{FK_2}\circ H_{\ast}\text{ by Proposition \ref{lem:optimistic} since }\mathfrak{s}^{\mathring{Y}}_2\text{ extends over }X_{F_2} 
\\&=\hat{q}_{FK_1} \text{ by assumption and }H\circ s_1=s_2\circ H|_{F_1}
\end{align*}

Under the boundary decomposition $\partial X_F=E_K\cup \mathring Y$, the $S^1$-fiber of $\mathring Y$ is identified with the meridian of $E_K$. Since $E_K\subset S^3$ inherits the restriction of the unique spin structure on $S^3$, the restriction of the boundary spin structure to $E_K$ is determined by its value on this meridian. Hence agreement on the fiber implies agreement on the $E_K$-piece.
 
Hence we have again the identity for $f:=((H|_{\mathring{Y}_1})\cup id_{E_K})^{\ast}\mathfrak{s}_2=\mathfrak{s}_1.$
\end{proof}

\subsubsection{Existence of special bundle isomorphisms}
We record an existence result for bundle isomorphisms that preserve the Rokhlin quadratic form. 
\begin{proposition}
    
\label{cor:realiseisometry}
Let ~$F_1,F_2\subseteq X$ be genus $g$ surfaces with boundary $K\subseteq S^3$ and equal relative Euler number $e$ and $\pi_1(X_{F_i})=1$, then there exists a homeomorphism~$f\colon F_1\to F_2$, restricting to the identity map on $K$, and inducing an isometry between the Rokhlin forms
\[
f_*\colon (H_1(F_1;\Z_2),Q_{F_1},q_{FK_1})\xrightarrow{\cong} (H_1(F_2;\Z_2),Q_{F_2},q_{FK_2}).
\]
Thus there exists bundle isomorphism $H:(\overline{\nu}F_1,F_1)\to (\overline{\nu}F_2,F_2),$ such that $H$ restricts to the identity map on $K$, and induces an isometry between the Rokhlin forms.\label{exist}

\end{proposition}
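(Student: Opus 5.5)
The plan is to reduce the statement to the classification of $\Z_2$-quadratic refinements on a symplectic $\Z_2$-space, together with the surjectivity of the mapping class group onto the symplectic group. Then, for the second part, lift the resulting surface homeomorphism to the (trivial) normal bundles.

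\textbf{Step 1: the two forms are ordinary or characteristic together.} I first check that $F_1$ and $F_2$ are simultaneously characteristic or not. This holds since they are homologous, so the Rokhlin forms $q_{FK_i}$ are defined for both or for neither. (In the ordinary case the statement should be read as requiring only an isometry of the intersection forms $Q_{F_i}$.)

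\textbf{Step 2: the characteristic case, comparing Arf invariants.} Fix identifications of $F_1$ and $F_2$ with a standard genus $g$ surface $\Sigma$ with one boundary component, restricting to the identity on $K$. Pull $q_{FK_1}$ and $q_{FK_2}$ back to two $\Z_2$-quadratic refinements of $(H_1(\Sigma;\Z_2),Q_\Sigma)$. Since this intersection form is nondegenerate symplectic of rank $2g$, Arf's classification says two refinements are isometric exactly when their Arf invariants agree. So I must show $\operatorname{Arf}(q_{FK_1})=\operatorname{Arf}(q_{FK_2})$. For this I use the relative Rokhlin theorem (Freedman--Kirby, and Klug's relative version cited above). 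For a characteristic surface $F\subset X$ with boundary $K$ it gives
\[
\sigma(X)-[F]\cdot[F]\equiv 8\operatorname{Arf}(q_{FK})+8\operatorname{Arf}(K)+8\,KS(X)\pmod{16}.
\]
Here $[F]\cdot[F]$ is computed from $e(F)$ by Lemma~\ref{e=lk}; alternatively one caps off in $\hat X$ and applies the closed Freedman--Kirby formula. Every other term depends only on $X$, $K$, the common homology class and the common Euler number $e$, so the Arf invariants agree. I expect this to be the main obstacle: one must be careful about the correct form of the relative formula and about the Euler-number term. Capping off with a fixed surface $G\subset B^4$ and using additivity of Arf, $q_{FK}(\hat F)=q_{FK}(F)\oplus q(G)$, should handle it.

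\textbf{Step 3: realising the isometry by a homeomorphism rel boundary.} Take an isometry $\phi\colon(H_1(F_1;\Z_2),q_{FK_1})\to(H_1(F_2;\Z_2),q_{FK_2})$. The mapping class group of $\Sigma$ rel boundary surjects onto $Sp(2g,\Z)$, which in turn surjects onto $Sp(2g,\Z_2)$. Hence $\phi$ (transported through the identifications) is induced by a homeomorphism $f\colon F_1\to F_2$ which is the identity on $K$. In the ordinary case the same surjectivity applied to the identity isometry gives $f$ directly.

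\textbf{Step 4: lifting to the normal bundles.} Both $\overline\nu F_i\cong F_i\times D^2$ by Lemma~\ref{HomologyY}. Define $H=f\times\mathrm{id}_{D^2}$ in trivialisations that agree near $K$ with the Seifert-framed normal disk bundle of $K$ in $S^3$; this is possible because both have relative Euler number $e$. Then $H$ restricts to the identity over $K$, and $H|_{F_1}=f$ induces the required isometry.
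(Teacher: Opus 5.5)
Your proposal is correct and follows the paper's proof essentially step for step. Both arguments go through the same chain: an isometry of the $\Z_2$-intersection forms, equal Arf invariants from Klug's relative Rokhlin theorem (using equal $e$), Arf's classification (the paper cites Browder), realisation via the symplectic representation of the mapping class group rel.\ boundary (Farb--Margalit), and lifting to the trivial disk bundles. You are in fact more careful than the paper about the reduction $Sp(2g,\Z)\to Sp(2g,\Z_2)$ and about arranging $H$ to be the identity over $K$.

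One small correction in Step 4: by Lemma~\ref{e=lk}, a global trivialisation of $\overline\nu F_i$ restricts over $K$ to the $e$-framing, not the Seifert framing. So you should make both trivialisations agree near $K$ with a fixed $e$-framed trivialisation of the normal bundle of $K$ in $S^3$. This is possible after a homotopy in a collar of $\partial F_i$ precisely because $e(F_1)=e(F_2)$, which is the reason you give.
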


\begin{proof}
$F_1$ and $F_2$ have the same genus, relative Euler numbers and connected boundary $K,$ therefore, there exists an isometry of their nonsingular $\Z_2-$intersection forms $(H_1(F_1;\Z_2),Q_{F_1})\cong (H_1(F_2;\Z_2),Q_{F_2})$ and \cite[Theorem 2]{klug2021relativeversionrochlinstheorem} implies that their Rokhlin forms have the same Arf invariants. By \cite[Theorem III.1.12]{Browder1972Surgery}, nonsingularity of the intersection form there exists some isometry of their $\Z_2$ quadratic forms $(H_1(F_1;\Z_2),Q_{F_1},q_{FK_1})\cong (H_1(F_2;\Z_2),Q_{F_2},q_{FK_2})$ and by \cite[Theorem 6.4]{FarbMargalit2012} this isometry is induced by a homeomorphism between the compact, same genus orientable surfaces with connected non-empty boundary that fixes said boundary point-wise. Since any homeomorphism of a surface lifts to a bundle isomorphism of the trivial $D^2$-bundle, by extending fiberwise by the identity this completes the proof. 
\end{proof}

\section{Proof of the main theorem.}\label{end}
The goal of this section is to prove Theorem \ref{mainthm}. Unless stated otherwise, for the rest of this section fix $X$ to be a simply-connected 4-manifold with boundary homeomorphic to $S^3,$ $K,$ a knot in $S^3,$ and embedded, homologous, genus $g$ surfaces $F_1$ and $F_2$ with boundary $K,$ such that $\pi_1(X_{F_i})=1$ for $i=1,2.$
\subsection{Strategy of the proof.} Recall the goal is to construct a homeomorphism of pairs $(X,F_1)\to(X,F_2)$ isotopic to the identity. The strategy is to pair a bundle isomorphism $$H:(\overline\nu F_1,F_1)\to  (\overline\nu F_2,F_2),\text{where } H|_{\nu K}=id_{\nu K},$$ with an isometry $\Lambda:Q_{X_{F_1}}\cong Q_{X_{F_2}}$ such that $(H|_{\mathring{Y}} \cup\; id_{E_K},\Lambda)$ is compatible (see Definition~\ref{pair}). Setting $f:=(H|_{\mathring{Y}_1})\cup \;id_{E_K},$ we show such a pair $(f,\Lambda)$ can be realized as the induced maps of a homeomorphism $F:X_{F_1}\to X_{F_2}.$ Finally we show the homeomorphism $\hat{F}:=F\cup H:(X,F_1)\to (X,F_2)$ is isotopic to the identity rel. boundary on $X.$ 

The main results of this section are the following:

\begin{proposition}
     Given a bundle isomorphism, $H:(\overline\nu F_1,F_1)\to  (\overline\nu F_2,F_2)$ fix $f:=H|_{\mathring{Y}_1}\cup id_{E_K}.$  There exists an isometry $\Lambda:H_2(X_{F_1})\to H_2(X_{F_2})$ such that diagram \ref{4.2} commutes and $\Lambda$ induces $f_{\ast}:H_i(\partial X_{F_1})\to H_i(\partial X_{F_2})$ for $i=1,2.$ \label{main1}
\end{proposition}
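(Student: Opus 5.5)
The isometry $\Lambda$ will be built piece by piece along the splitting $H_2(X_{F_j})\cong E(\alpha)\oplus \operatorname{Im}(\partial_{X_{F_j}})$ from Proposition~\ref{QX}. On the $E(\alpha)$ part we force $(i_2)_\ast\circ\Lambda=(i_1)_\ast$, so that Lemma~\ref{kj} applies and diagram~\ref{4.2} commutes. On the radical part we use $H$ via excision. Since $F_1,F_2$ are homologous, both $(i_j)_\ast$ have the same image $E(\alpha)$ by Lemma~\ref{Im=E}, and Proposition~\ref{QX} gives short exact sequences
\[0\to H_3(X,X_{F_j})\xrightarrow{\partial_{X_{F_j}}} H_2(X_{F_j})\xrightarrow{(i_j)_\ast} E(\alpha)\to 0,\]
with $H_3(X,X_{F_j})\cong H_3(\overline\nu F_j,\mathring Y_j)\cong H_2(\mathring Y_j)\cong \Z^{2g}$.

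On the kernels we define $\Lambda$ as the map induced by $H$: excision identifies $H_3(X,X_{F_j})$ with $H_3(\overline\nu F_j,\mathring Y_j)$, and $H$ is a map of pairs $(\overline\nu F_1,\mathring Y_1)\to(\overline\nu F_2,\mathring Y_2)$. This forces $\Lambda\circ\partial_{X_{F_1}}=\partial_{X_{F_2}}\circ H_\ast$. Through the commutative diagram~\eqref{diagram1}, this means $\Lambda$ agrees with $f_\ast=(H|_{\mathring Y_1})_\ast$ on $i_{\partial X_{F_1}}(i_{\mathring Y_1}(H_2(\mathring Y_1)))$.

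Next we choose splittings $\sigma_j:E(\alpha)\to H_2(X_{F_j})$ and set $\Lambda(\sigma_1(x)+k)=\sigma_2(x)+H_\ast k$. This is automatically an isomorphism with $(i_2)_\ast\Lambda=(i_1)_\ast$. It preserves $Q$ because $Q_{X_{F_j}}(a,b)=Q_X((i_j)_\ast a,(i_j)_\ast b)$ and the kernel part lies in the radical, since by the Remark after Lemma~\ref{surj} it is contained in $\operatorname{Im}(i_{\partial X_F})$. Lemma~\ref{kj} then gives commutativity of diagram~\ref{4.2}.

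The remaining and main step is checking that $\Lambda$ induces $f_\ast$ on $H_1$ and $H_2$ of the boundary, meaning compatibility with $H_2(X_{F_j})\to H_2(X_{F_j},\partial X_{F_j})\to H_1(\partial X_{F_j})$ and with $i_{\partial X_{F_j}}$. I would split this by the relative Euler number.

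\emph{Case $e\neq 0$.} By Lemma~\ref{surj} and the Remark after it, $\operatorname{Im}(i_{\partial X_{F_j}})=\operatorname{Im}(\partial_{X_{F_j}})$, and $i_{\mathring Y}$ is an isomorphism, so $H_2(\partial X_{F_j})$ comes from $H_2(\mathring Y_j)$. Compatibility on $H_2$ then follows from the kernel construction. For $H_1$, I would use the identification $H_2(X_F,\partial X_F)\cong\operatorname{Hom}(H_2(X_F),\Z)$ together with $k_j$. Since $f$ is the identity on $E_K$ and is $H$ on $\mathring Y$, I expect the meridian and the $H_1(F)$ classes to be matched. The $\Z_e$ summand, generated by the meridian, is fixed by $f_\ast$, and Lemma~\ref{kj} ensures $\Lambda^\ast k_2=k_1$ respects it.

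\emph{Case $e=0$.} Here $H_2(\partial X_F)$ has an extra $\Z$, namely the capped Seifert surface class coming from $E_K$, which maps to $\tilde\alpha$. We choose the splitting compatibly: using Lemma~\ref{QXW}, set $E(\alpha)=\langle\tilde\alpha\rangle\oplus W$, let $\sigma_j(\tilde\alpha)$ be the image of the class $[\Sigma]\in H_2(\partial X_{F_j})$ built from a Seifert surface of $K$ union $s_j(F_j)$, and choose $s_2=H\circ s_1$ so that $f_\ast[\Sigma_1]=[\Sigma_2]$. On $W$, which is nondegenerate, $\sigma_j$ is arbitrary.

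I expect the main obstacle to be this $e=0$ bookkeeping. The difficulty is showing that $f_\ast$ really carries the extra $H_2$ generator to the chosen one. A further obstacle is verifying compatibility on the torsion or free part of $H_1(\partial X_F)$ through the dual maps.
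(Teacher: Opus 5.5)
Your construction of $\Lambda$ (the excision-transported $H_\ast$ on $\operatorname{Im}(\partial_{X_{F_j}})$ plus splittings $\sigma_j$ of $E(\alpha)$) is the paper's, and so is your isometry check. The same holds for your use of Lemma~\ref{kj}, your $e\neq 0$ argument on $H_2(\partial X_F)$, and your meridian/torsion argument. Your $e=0$ choice $\sigma_j(\tilde\alpha)=i_{\partial X_{F_j}}[s_j(F_j)\cup\Sigma_K]$ is legitimate, since $\tilde\alpha$ is primitive in $E(\alpha)$ by Lemma~\ref{QXW}. It actually addresses a point the paper passes over with ``by construction'': with arbitrary splittings the extra $H_2$-class need not be carried to $f_\ast$ of itself.

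The gap is $H_1(\partial X_F)$, which you only ``expect''. First, you are missing the mechanism the paper uses. Both $f_\ast$ and the maps $\partial(\Lambda)_1,\partial(\Lambda)_2$ induced by $\Lambda$ respect the nonsingular pairing $H_1(\partial X_F)/TH_1(\partial X_F)\times H_2(\partial X_F)\to\Z$. So $H_2$-compatibility forces $\partial(\Lambda)_1=f_\ast$ modulo torsion. That settles $e=0$, where $H_1(\partial X_F)\cong\Z^{2g+1}$ is torsion-free.

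Second, and this is a genuine failure, for $e\neq 0$ your claim that the $H_1(F)$-classes are matched is false for a general choice of splittings. Together with the meridian computation you only get $\partial(\Lambda)_1=f_\ast+\psi$, with $\psi\colon H_1(\partial X_{F_1})\to TH_1(\partial X_{F_2})\cong\Z_e$ vanishing on torsion. Moreover $\psi$ depends on the splitting. Replace $\sigma_1$ by $\sigma_1+\phi$, where $\phi\colon E(\alpha)\to\operatorname{Im}(\partial_{X_{F_1}})\cong H_1(F_1)$. This changes $\Lambda$ by $-H_\ast\circ\phi\circ(i_1)_\ast$, and the new map is still an isometry making (4.2) commute and matching $f_\ast$ on $H_2$. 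Yet it changes $\psi([s_1(\gamma)])$ by the class of $y\mapsto\pm\,\gamma\cdot\phi(y)$ in $\operatorname{coker}\bigl(E(\alpha)\to\Hom(E(\alpha),\Z)\bigr)\cong\Z_e$. That class is nonzero for suitable $\phi$ once $|e|>1$ and $g>0$.

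So you need an extra correction step. The paper modifies $\Lambda$ following Boyer's proof of Proposition~1.6 in \cite{boyer1986simply}. Alternatively, the same computation, together with unimodularity of the intersection form on $H_1(F_1)$, lets you choose $\phi$ so that $\psi=0$. Without such a step, $(f,\Lambda)$ need not be a compatible pair.
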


\begin{customthm}{Lemma 4.6}
    Assume $X_{F_i}$ is non-spin $\operatorname{for}\;i=1,2.$ Fix a compatible pair $(f,\Lambda)$ making diagram \ref{4.2} commute. There exists an isometry  $\Lambda':H_2(X_{F_1})\to H_2(X_{F_2})$ such that $(f,\Lambda')$ remains compatible, makes diagram \ref{4.2} commute, and is realized by a homeomorphism $F:X_{F_1}\to X_{F_2}.$
\end{customthm}

\begin{proposition}\label{main3}
   If a homeomorphism $F:X_{F_1}\to X_{F_2}$ restricted to the boundary agrees with $f:=H|_{\mathring{Y}}\cup id_{E_K}$ and $(f,\Lambda):=(F|_{\partial},F_{\ast})$ makes diagram \ref{4.2} commute then $\hat{F}=F\cup H:X\to X$ sends $F_1\text{ to } F_2$ and is isotopic  rel. boundary to the identity. 
\end{proposition}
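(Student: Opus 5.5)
We must prove Proposition~\ref{main3}: $\hat{F}=F\cup H$ is a homeomorphism $(X,F_1)\to(X,F_2)$ that is isotopic rel.\ boundary to the identity. The plan is to invoke \cite[Corollary C]{orson2022mapping}, which reduces this to two checks: $\hat{F}|_{S^3}=id_{S^3}$ and $\hat{F}_\ast=id$ on $H_2(X)$.

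First, $\hat F$ is well defined. By hypothesis $F|_{\partial X_{F_1}}=f=H|_{\mathring{Y}_1}\cup id_{E_K}$, and $X=X_{F_i}\cup_{\mathring{Y}_i}\overline{\nu}F_i$. So $F$ and $H$ agree on the common piece $\mathring Y_1$ and glue to a homeomorphism $X\to X$. It sends $F_1$ to $F_2$ because $H$ is a bundle isomorphism of pairs. On $\partial X=S^3=E_K\cup \overline{\nu}K$, the map is $id_{E_K}$ on $E_K$ and $H|_{\nu K}=id$ on $\overline{\nu}K$, so $\hat F|_{S^3}=id_{S^3}$.

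The main step is showing $\hat F_\ast=id_{H_2(X)}$. Diagram~\ref{4.2} is presumably the square $(i_2)_\ast\circ\Lambda=(i_1)_\ast$, together with compatibility with $f$. Naturality of inclusion gives $\hat F_\ast\circ(i_1)_\ast=(i_2)_\ast\circ F_\ast=(i_2)_\ast\circ\Lambda=(i_1)_\ast$. Hence $\hat F_\ast$ is the identity on $\operatorname{Im}(i_1)_\ast=E(\alpha)$ by Lemma~\ref{Im=E}.

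By unimodularity of $Q_X$ and primitivity of $\alpha$, there is $v\in H_2(X)$ with $Q_X(v,\tilde\alpha)=1$, and $H_2(X)=\langle v\rangle\oplus E(\alpha)$, as in the proof of Lemma~\ref{QXW}. It remains to show $\hat F_\ast v=v$. I would argue via $\hat F_\ast$ being an isometry of $Q_X$ that fixes $E(\alpha)$ pointwise.

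Since $\hat F$ is a homeomorphism of pairs taking $F_1$ to $F_2$ and fixing the boundary, it takes $[F_1]$ to $[F_2]=[F_1]=\alpha$ in $H_2(X,\partial X)$. Because $H_2(X)\to H_2(X,\partial X)$ is an isomorphism, this gives $\hat F_\ast\tilde\alpha=\tilde\alpha$. Write $\hat F_\ast v=v+w$ with $w\in E(\alpha)$; the coefficient of $v$ is $Q_X(\hat F_\ast v,\tilde\alpha)=Q_X(\hat F_\ast v,\hat F_\ast\tilde\alpha)=1$. For any $x\in E(\alpha)$,
\[Q_X(w,x)=Q_X(\hat F_\ast v,\hat F_\ast x)-Q_X(v,x)=0.\]
So $w$ lies in the radical of $Q_X|_{E(\alpha)}$, which is $E(\alpha)\cap E(\alpha)^\perp$.

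If $e\neq0$, Lemma~\ref{QXE} gives nondegeneracy, so $w=0$.

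If $e=0$, the radical is $\langle\tilde\alpha\rangle$ by the argument of Lemma~\ref{QXW} and unimodularity, so $w=n\tilde\alpha$. Then
\[Q_X(v,v)=Q_X(\hat F_\ast v,\hat F_\ast v)=Q_X(v,v)+2n,\]
so $n=0$.

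Hence $\hat F_\ast=id$ in both cases, and \cite[Corollary C]{orson2022mapping} yields that $\hat F$ is isotopic rel.\ boundary to the identity.

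The delicate point is that the compatibility of $(f,\Lambda)$ must actually encode $(i_2)_\ast\Lambda=(i_1)_\ast$; if it does not, I would instead use the Mayer--Vietoris identification together with Lemma~\ref{kj}.
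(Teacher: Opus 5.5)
Your proposal is correct and follows essentially the same route as the paper. You reduce via \cite[Corollary C]{orson2022mapping} to showing $\hat F_\ast=\mathrm{id}$ on $H_2(X)$, get the identity on $E(\alpha)$ from the right square of diagram~(4.2), and then treat $v$ with $Q_X(v,\tilde\alpha)=1$ using nondegeneracy of $Q_X|_{E(\alpha)}$ when $e\neq 0$ and the computation $Q_X(v,v)=Q_X(v,v)+2n$ when $e=0$; the only cosmetic differences are that you identify the radical of $Q_X|_{E(\alpha)}$ as $\langle\tilde\alpha\rangle$ directly (rather than passing through the complement $W$ of Lemma~\ref{QXW}) and that you also spell out why $\hat F$ is well defined and restricts to the identity on $S^3$.
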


Some preparation is required for the proof of these statements, which is the goal of this section, but assuming the statements we prove Theorem \ref{mainthm}.

\begin{customthm}{Theorem 1.1}
    Let $K$ be a knot in $S^3.$ Any two homologous orientable genus $g$ surfaces $F_1,F_2\subset X,$ both with boundary $K$ and $\pi_1(X\setminus F_i)=1$ for $i=1,2,$ are topologically ambiently isotopic rel. boundary. 
\end{customthm}

\begin{proof}[Proof of Theorem 1.1]

Begin by constructing a homeomorphism over the surface exteriors:

\textbf{In the case of spin exteriors}, start with a bundle isomorphism $H:(\overline{\nu} F_1,F_1)\to (\overline{\nu} F_2,F_2).$ Proposition~\ref{exist} along with Theorem \ref{SpinU} guarantees existence of $H'$ such that the homeomorphism $f:=H'|_{\mathring{Y}}\cup id_{E_K}:\partial X_{F_1}\to \partial X_{F_2}$ has the property that $X_{F_1}\cup_f -X_{F_2}$ is spin. Now apply Proposition \ref{main1} to obtain an isometry compatible with $f$ and making diagram 4.2 commute. By \cite[Proposition 0.8]{boyer1986simply}, $(f,\Lambda)$ is induced by a homeomorphism $F:X_{F_1}\to X_{F_2}$  if and only if the union $X_{F_1}\cup_fX_{F_2}$ is spin.

\textbf{For the case of non-spin exteriors},

start with any bundle isomorphism $H:(\overline\nu F_1,F_1)\to  (\overline\nu F_2,F_2),$ and consider $f:=H|_{\mathring{Y}}\cup id_{E_K}.$ By Proposition \ref{main1} there exists an isometry $\Lambda$ making diagram \ref{4.2} commute and compatible with $f.$ Now apply Lemma \ref{main2}
to find $\Lambda'$ an isometry compatible with $f$ and making diagram \ref{4.2} commute, that is realized by a homeomorphism $F:X_{F_1}\to X_{F_2}.$

In either case, we obtain a homeomorphism over the surface exteriors $F:X_{F_1}\to X_{F_2}$ such that $(F|_{\partial},F_{\ast})\cong (f,\Lambda).$ We now invoke Proposition \ref{main3} to ensure that $\hat{F}:=F\cup H\text{ or } F\cup H'$ from $(X,F_1)\to (X,F_2)$ are isotopic to the identity. Therefore, $F_1$ and $F_2$ are topologically ambiently isotopic rel. boundary. 

\end{proof}
The proof above shows that, after extending a homeomorphism \(F_1\to F_2\) over the exterior and normal bundle, the resulting ambient homeomorphism of \(X\) is isotopic to the identity. We now turn to the analogous self-extension question: for a fixed surface \(F\subset X\), which homeomorphisms \(h:F\to F\) rel. boundary extend to homeomorphisms of the pair \((X,F)\)?

\begin{theorem}
    Let $K\subset S^3$ be a knot and $F\subset X$ a genus $g$ surface with boundary $K$ and $\pi_1(X\setminus F)=1.$ Fix an orientation-preserving homeomorphism $h: F \to F$ that restricts to the identity on $\partial F.$
    \begin{itemize}
        \item When $F$ is characteristic, if $h$ preserves the Rokhlin form $q_{FK}\circ h_{\ast}=q_{FK}$ then $h$ extends as a homeomorphism of pairs $H:(X,F)\xrightarrow[]{\cong}(X,F).$\item When $F$ is ordinary, $h$ extends as a homeomorphism of pairs $H:(X,F)\xrightarrow[]{\cong}(X,F).$ 
        \end{itemize}
\end{theorem}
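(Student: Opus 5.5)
The plan is to run the proof of Theorem~\ref{mainthm} with $F_1=F_2=F$, feeding in the given homeomorphism $h$ in place of an arbitrary homeomorphism $F_1\to F_2$.

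First, extend $h$ fiberwise by the identity to a bundle isomorphism $H:(\overline\nu F,F)\to(\overline\nu F,F)$. Since $\overline\nu F\cong F\times D^2$ by Lemma~\ref{HomologyY}, $H=h\times\mathrm{id}_{D^2}$ is well defined. Because $h|_{\partial F}=\mathrm{id}$, $H$ is the identity over $\overline\nu K$. Set $f:=H|_{\mathring Y}\cup id_{E_K}:\partial X_F\to\partial X_F$. The relative Euler numbers of the source and target trivially agree, so all the boundary computations of Section~\ref{sec:AlgTop} apply verbatim.

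Second, build an isometry of $Q_{X_F}$ compatible with $f$. Proposition~\ref{main1} provides $\Lambda:H_2(X_F)\to H_2(X_F)$ with $(i)_\ast\circ\Lambda=i_\ast$, commuting with diagram~\ref{4.2}, and inducing $f_\ast$ on $H_1,H_2$ of the boundary. Here one may even take $\Lambda$ to be the identity on the $E(\alpha)$ summand of Proposition~\ref{QX}, and determined by $h_\ast$ on the $\mathrm{Im}(\partial_{X_F})\cong H_2(\mathring Y)\cong H_1(F)$ part. The realization step then splits into cases.
\begin{itemize}
\item \emph{Ordinary $F$ with $X$ non-spin.} Then $X_F$ is non-spin, and Lemma~\ref{main2} modifies $\Lambda$ to $\Lambda'$ realized by a homeomorphism $G:X_F\to X_F$ with $G|_\partial=f$.
\item \emph{Ordinary $F$ with $X$ spin.} Theorem~\ref{SpinU}(1) replaces $H$ by $H'=H\circ R$, with $H'|_F=h$ still, so that $f^\ast\s^\partial$ extends over $X_F$. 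Then $X_F\cup_f -X_F$ is spin, and \cite[Proposition 0.8]{boyer1986simply} realizes $(f,\Lambda)$ by some $G$.
\item \emph{Characteristic $F$.} Here $X_F$ is spin. The hypothesis $q_{FK}\circ h_\ast=q_{FK}$ is exactly the input of Theorem~\ref{SpinU}(2), so $f^\ast\s^\partial$ extends, and Boyer's realization applies again.
\end{itemize}

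Finally, glue $\hat G:=G\cup H$ (or $G\cup H'$) to obtain a homeomorphism of pairs $(X,F)\to(X,F)$ restricting to $h$ on $F$. Isotopy to the identity is not needed here, only extension, so Proposition~\ref{main3} is not required. It still confirms that $\hat G$ acts as the identity on $H_2(X)$ and on $S^3$.

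The main obstacle is the spin-compatibility step, especially in the characteristic case. There I must check that $q_{KT}(H^\ast\s^{\mathring Y})=\hat q_{FK}$ after pulling back along $h$. I would argue this as in Case 2 of the proof of Theorem~\ref{SpinU}, using Proposition~\ref{COP}(2) and Proposition~\ref{ext} with the sections $s$ and $H^{-1}\circ s\circ h$. Since $H$ is fiberwise the identity, the fiber condition, namely that $\s|_\mu$ does not extend over the disk, is automatic.

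Without the Rokhlin hypothesis in the characteristic case, $f^\ast\s^\partial$ could fail to extend. This is precisely why the characteristic statement needs that assumption, while the ordinary case does not: when $X$ is spin, $R$ absorbs the discrepancy in spin structures, and when $X$ is non-spin there is no spin constraint at all.
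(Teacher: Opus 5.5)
Your proposal is correct and follows the paper's argument. You extend $h$ to $H=h\times\mathrm{id}_{D^2}$ and obtain a compatible $\Lambda$ from Proposition~\ref{main1}. You realize it by Lemma~\ref{main2} when $X_F$ is non-spin, or by Theorem~\ref{SpinU} plus Boyer's Proposition~0.8 when $X_F$ is spin, and then glue with $H$ (or $H'$); you are right that Proposition~\ref{main3} is not needed for mere extension.

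One ordering detail needs fixing. When $X$ is spin, apply Proposition~\ref{main1} to $H'=H\circ R$ rather than to $H$, as the paper does in the proof of Theorem~\ref{mainthm}. The reason is that $R|_{\mathring Y}$ changes $f_\ast$ on $H_1(\partial X_F)$ by multiples of the fiber class, so a $\Lambda$ built for the original $f$ need not be compatible with the new one.
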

\begin{proof} 

We first consider the case where $X_F$ is spin. If $F$ is characteristic, suppose $h$ preserves the Rokhlin quadratic form. Extend $h$ to a bundle isomorphism $H:(\overline{\nu}F,F)\to (\overline{\nu}F,F).$ When $X$ is spin proceed similarly: Theorem \ref{SpinU} provides a bundle isomorphism $H',$ agreeing with $h$ on the zero section, for which the induced boundary map identifies the extending spin structures. Now follow the proof of Theorem \ref{mainthm}, in the spin exterior case, to conclude. 
If $X_F$ is not spin, extend any homeomorphism rel. boundary $h$ to a bundle isomorphism $H:(\overline{\nu}F,F)\to (\overline{\nu}F,F).$ Now follow the proof of Theorem \ref{mainthm} to conclude.
\end{proof}

\subsection{Obtaining a morphism/compatible pair.} In this subsection, we define compatible pairs $(f,\Lambda)$ and give a procedure to construct such pairs by building on techniques analogous to Boyer's treatment of the closed case in \cite{Boyer1993}. The following results adapt the framework of Boyer to the setting of surfaces with boundary.

Let $M_0,M_1$ be simply-connected 4-manifolds with connected boundary. Any isometry $\Lambda:H_2(M_0)\to H_2(M_1)$ induces maps on $H_2(M_1,\partial M_1)\to H_2(M_0,\partial M_0)$ by taking the dual of $\Lambda$ and identifying
$\Hom_{\Z}(H_2(M_i),\Z)$ and $H_2(M_i,\partial M_i)$ for $i=0,1$ via the Poincaré duality and evaluation isomorphisms. Likewise, $\Lambda$ induces maps $H_i(\partial M_0)\to H_i(\partial M_1)$ by taking the unique maps making the following diagram commute: \begin{equation*}
    \begin{tikzcd}[ampersand replacement=\&,cramped]
	0 \& {H_2(\partial M_0)} \&\& {H_2(M_0)} \&\& {H_2(M_0,\partial M_0)} \&\& {H_1(\partial M_0)} \& 0 \\
	\\
	0 \& {H_2(\partial M_1)} \&\& {H_2(M_1)} \&\& {H_2(M_1,\partial M_1)} \&\& {H_1(\partial M_1)} \& 0
	\arrow[from=1-1, to=1-2]
	\arrow[from=1-2, to=1-4]
	\arrow[from=1-2, to=3-2]
	\arrow[from=1-4, to=1-6]
	\arrow["\Lambda"', from=1-4, to=3-4]
	\arrow[from=1-6, to=1-8]
	\arrow[from=1-8, to=1-9]
	\arrow[from=1-8, to=3-8]
	\arrow[from=3-1, to=3-2]
	\arrow[from=3-2, to=3-4]
	\arrow[from=3-4, to=3-6]
	\arrow["{{\Lambda^{\ast}}}"', from=3-6, to=1-6]
	\arrow[from=3-6, to=3-8]
	\arrow[from=3-8, to=3-9].
\end{tikzcd}
\end{equation*}
\begin{definition}\label{pair}
     Fix an orientation-preserving homeomorphism $f:\partial M_0\to \partial M_1,$ and an isometry $\Lambda:(H_2(M_0),Q_{M_0})\xrightarrow{\cong}(H_2(M_1),Q_{M_1}).$ We say that $(f,\Lambda)$ is a \emph{compatible pair} if $\Lambda$ and $f$ induced isomorphisms  $H_{\ast}(\partial M_0)\xrightarrow{\cong} H_{\ast}(\partial M_1)$ agree. The pair $(f,\Lambda)$ is \emph{realized by a homeomorphism} if there exists a homeomorphism $F:M_0\to M_1$ such that $(F|_{\partial},F_{\ast})= (f,\Lambda).$
\end{definition}

Once a compatible pair is obtained, the following results will give us sufficient conditions for when we can construct a homeomorphism of the surface exteriors, but some preparation is needed. For our purposes define $I^1(\partial X_{F_i})$ to be the image of $H^1(\partial X_{F_i};\Z)$ in $H^1(\partial X_{F_i};\Z_2)$ under mod 2 reduction. $\theta(f,\Lambda)$ is defined to be a class in $I^1(\partial X_{F_i})$ such that $\theta(f,\Lambda)\cdot \pi_{\Lambda}(\mathfrak{s}_1)=f_{\#}(\mathfrak{s}_1)$ for $\pi_{\Lambda}:\Spin(\partial X_{F_1})\to \Spin(\partial X_{F_2})$ the induced bijection by $\Lambda$ and $f_{\#}:\Spin(\partial X_{F_1})\to \Spin(\partial X_{F_2})$ the induced bijection by $f^{-1}$ \cite[Section 4]{boyer1986simply}.

\begin{customlemma}{Theorem}\cite[Theorem 0.7]{boyer1986simply}
Let $M_0$ and $M_1$ be simply-connected 4-manifolds with connected boundary and equal Kirby-Siebenmann invariants. A compatible pair $(f,\Lambda):\partial M_0\to \partial M_1$ is realized by a homeomorphism $F:M_0\to M_1 $ if and only if the obstruction $\theta(f,\Lambda)$ is trivial.
\end{customlemma}
\begin{customlemma}{Proposition}\cite[Proposition 0.8]{boyer1986simply}
 
    Let $M_0$ and $M_1$ be simply-connected 4-manifolds with connected boundary and equal Kirby-Siebenmann invariants. Given a compatible pair $(f,\Lambda)$ 
    \begin{enumerate}
        \item If $M_0$ and $M_1$ are non-spin then there exists an isometry $$\;\Lambda':(H_2(M_0),Q_{M_0})\xrightarrow{\cong}(H_2(M_1),Q_{M_1})$$ such that $(f,\Lambda')$ remains compatible and is realized by a homeomorphism.
        \item If $M_0$ and $M_1$ are spin, then $(f,\Lambda)$ is realized by a homeomorphism if and only if the manifold $M_0\cup_f-M_1$ is spin.
    \end{enumerate}
    \label{0.8}
\end{customlemma}

First recall we denote the unit sphere bundle of $F$ by $\mathring{Y},$ which, in our case, is homeomorphic to $F\times S^1.$ 
\begin{notation}
The following maps will be used in diagram \ref{4.2}.

    \begin{itemize}
        \item $(i_j)_{\ast}:H_2(X_{F_j})\to H_2(X)$ denotes the inclusion induced map 
        \item $\partial_{X_{F_j}}:H_3(X,X_{F_j})\to H_2(X_{F_j})$ denotes the connecting homomorphism with regard to the long exact sequence of the pair $(X,X_{F_j}).$
        \item $(exc)_j$ denotes the standard excision map with respect to the decomposition of $X$ as $X_{F_j}\cup \overline{\nu}F_j.$
        \item $\partial_{\mathring{Y}_j}:H_3(\overline{\nu}F_{j},\mathring{Y}_j)\to H_2(\mathring{Y}_j)$ denotes the connecting homomorphism with regard to the long exact sequence of the pair $(\overline{\nu}F_{j},\mathring{Y}_j).$
        \item $i_{\mathring{Y}_j}:H_2(\mathring{Y}_j)\to H_2(\partial X_{F_j})$ denotes the induced map by inclusion.
        \item $i_{\partial X_{F_j}}:H_2(\partial X_{F_j})\to H_2(X_{F_j})$ denotes the induced map by inclusion.
    \end{itemize}
\end{notation}
We now prove the first of the main intermediate results which we recall for the reader. 
\begin{figure}
   \label{4.2}
\end{figure}
\begin{customlemma}{Proposition 4.1}
For a given bundle isomorphism, $H:(\overline\nu F_1,F_1)\to  (\overline\nu F_2,F_2),$ fix $f:=H|_{\mathring{Y}}\cup id_{E_K}.$  There exists an isometry $\Lambda:H_2(X_{F_1})\to H_2(X_{F_2})$ such that the diagram below commutes and $(f,\Lambda)$ is a compatible pair.

\begin{equation}
\begin{tikzcd}[ampersand replacement=\&,cramped]
 	0 \& {H_3(X,X_{F_1})} \& {} \&\& {H_2(X_{F_1})} \&\& E([F_1]) \& 0 \\
	\\
	\& {H_3(\overline{\nu}F_1,\mathring{Y}_1)} \&\& {H_2(\partial X_{F_1})} \\
	\\
	\& {H_3(\overline{\nu}F_2,\mathring{Y}_2)} \&\& {H_2(\partial X_{F_2})} \\
	\\
	0 \& {H_3(X,X_{F_2})} \& {} \&\& {H_2(X_{F_2})} \&\& E([F_2]) \& 0
	\arrow[from=1-1, to=1-2]
	\arrow["{{\partial_{X_{F_1}}}}", from=1-2, to=1-5]
	\arrow["{{i_{1}}}", from=1-5, to=1-7]
	\arrow["\Lambda", from=1-5, to=7-5]
	\arrow[from=1-7, to=1-8]
	\arrow["{{{{id_E}}}}"', from=1-7, to=7-7]
    \arrow["="'', from=1-7, to=7-7]
	\arrow["\cong"', from=3-2, to=1-2]
    \arrow["exc_1"'', from=3-2, to=1-2]
	\arrow["{{{i_{\mathring{Y}_1}\circ\partial_{\mathring{Y}_1}}}}", from=3-2, to=3-4]
	\arrow["{{{H_{\ast}}}}"', from=3-2, to=5-2]
	\arrow["{{{{(i_{\partial X_{F_1}})_{\ast}}}}}", from=3-4, to=1-5]
	\arrow["{{{(H|_{\mathring{Y}}\cup id_{E_K})_{\ast}}}}"', from=3-4, to=5-4]
	\arrow["{{{i_{\mathring{Y}_2}\circ\partial_{\mathring{Y}_2}}}}", from=5-2, to=5-4]
	\arrow["\cong"'', from=5-2, to=7-2]
    \arrow["exc_2"', from=5-2, to=7-2]
	\arrow["{{{{(i_{\partial X_{F_2}})_{\ast}}}}}"', from=5-4, to=7-5]
	\arrow[from=7-1, to=7-2]
	\arrow["{{\partial_{X_{F_2}}}}", from=7-2, to=7-5]
	\arrow["{{i_{2}}}", from=7-5, to=7-7]
	\arrow[from=7-7, to=7-8].
\end{tikzcd}\tag{4.2}\end{equation}
\end{customlemma}

\begin{proof} Following the proof of \cite[Lemma 4.4]{Boyer1993},
we first construct an isomorphism between the second homology groups $\Lambda:H_2(X_{F_1})\to H_2(X_{F_2}).$ For convenience we recall the definition of $E([F_i]),$ given $\alpha\in H_2(X,\partial X)$ write $E(\alpha):=\{x\in H_2(X):Q_X(x, \alpha)=0\}.$ Since $F_1,F_2$ are homologous $E([F_1])=E([F_2])$ and we write this as $E.$

Since $E$ is free, the row splits. Pick splittings $s_1:H_2(X_{F_1})\to H_3(X,X_{F_1})$ and $s_2:E([F_2])\to H_2(X_{F_2})$ then define $\Lambda$ as follows: $$\Lambda(x)=\partial_2\circ (exc_2)\circ H_{\ast}\circ (exc_1)^{-1}\circ s_1(x)+s_2\circ 1_{E}\circ i_{1}(x).$$ 
We argue that this defines an isometry. Set:
\[
A := \partial_2 \circ \mathrm{exc}_2 \circ H_* \circ \mathrm{exc}_1^{-1} \circ s_1,
\qquad
B := s_2 \circ 1_E \circ i_1.
\]
Then \(\Lambda = A + B\), so
\[
Q_{X_{F_2}}(\Lambda(x),\Lambda(y))
= Q_{X_{F_2}}(Ax+Bx,\; Ay+By).
\]
By bilinearity,

$$Q_{X_{F_2}}(\Lambda(x),\Lambda(y))
= Q_{X_{F_2}}(Ax,Ay)
 + Q_{X_{F_2}}(Ax,By) 
 + Q_{X_{F_2}}(Bx,Ay)
 + Q_{X_{F_2}}(Bx,By).$$

By Proposition \ref{QX}, the first three terms vanish since $Q_{X_{F_2}}$ vanishes away from the $E[F_2]$ summand,
\[
Q_{X_{F_2}}(\Lambda(x),\Lambda(y))
= Q_{X_{F_2}}(Bx,By).
\]

Substituting \(B\) gives

\begin{align*}
    Q_{X_{F_2}}(Bx,By)
&= Q_{X_{F_2}}\!\left(
s_2 \circ i_1(x),\,
s_2 \circ  i_1(y)
\right)\\&=Q_{X}(i_2\circ s_2\circ i_1(x),i_2\circ s_2\circ i_1(y))
    \\& =Q_X|_E(i_1(x),i_1(y))
    \\&= Q_{X_{F_1}}(x,y).
\end{align*}

Now that $\Lambda$ is an isometry, we can see by construction $\Lambda$ induces $f_{\ast}:H_2(\partial X_{F_1})\to H_2(\partial X_{F_2}).$ Recall $\partial X_{F_i}$ is a closed 3-manifold and $f$ is a homeomorphism, hence $f_{\ast}$ must preserve the intersection pairing. Let $\partial(\Lambda)_i:H_i(\partial X_{F_1})\to H_i(\partial X_{F_2})$ denote homomorphisms induced by $\Lambda$ for $i=1,2.$ By the nonsingularity of the pairing \[\frac{H_1(\partial X_{F_1})}{TH_1(\partial X_{F_1})}\times H_2(\partial X_{F_1})\to \Z\] we now have that $\partial(\Lambda)_1$ is equal to $f_{\ast}$ up to torsion. Hence when the relative Euler number is zero we have our desired compatible pair that makes diagram \ref{4.2} commute.

Now assume $e\neq 0.$ Now by using the meridian of $F_i$ we construct closed surfaces $\Sigma_i$ in $X$ such that $[\Sigma_i]\cdot [F_i]=1\;(\operatorname{for}\;i=1,2).$ Pick a point on $F_i$ and consider the $D^2-$ fibre of that point in the tubular neighborhood. $X_{F_i}$ is simply-connected therefore the boundary of the fiber $\partial D$ is null-homotopic in the complement. Pick a surface $D_i'$ with $\partial D_i'=\partial D_i.$ Since their boundaries are identified we have constructed surfaces $\Sigma_i$ with $[\Sigma_i]\cdot [F_i]=1.$ 

We then see the image of $\Sigma_i$ under the boundary homomorphism $\partial_{\partial X_{F_i}}:H_2(X_{F_i},\partial X_{F_i})\to H_1(\partial X_{F_i})$ precomposed with $k_i:H_2(X)\to H_2(X_{F_i},\partial X_{F_i})$ is the $S^1-$fibre represented by $\{pt\}\times S^1\subset F_i\times S^1=\mathring{Y}_i.$ 

Moreover using successively that $f|_{\mathring{Y_1}}$ is a bundle isomorphism, hence preserving $S^1-$ fibers, Lemma \ref{kj}, and the definition of $\partial(\Lambda)_1,$ we obtain \begin{align*}
    f_{\ast}(\partial_{\partial X_{F_1}}\circ k_1([\Sigma_1]))&=\partial_{\partial X_{F_2}}(k_2([\Sigma_2]))\\&=\partial_{\partial X_{F_2}}\circ(\Lambda^{\ast})^{-1}(k_1([\Sigma_1]))\\&=\partial(\Lambda)_1(\partial_{\partial X_{F_1}}\circ k_1([\Sigma_1])). 
\end{align*}
Since $F_1$ and $F_2$ have the same nonzero relative Euler number $e,$ the torsion subgroup of $H_1(\partial X_{F_i}),$ denoted $TH_1(\partial X_{F_i}),$ is isomorphic to $\Z_e$ generated, geometrically, by the $S^1-$fiber, and above we have shown $\partial(\Lambda)_1$ and $f_{\ast}$ agree as maps on the torsion subgroup. 

Since we have shown that $\partial(\Lambda)_1$ and $f_{\ast
}$ agree up to torsion, we obtain $\partial (\Lambda)_1=f_{\ast}(x)+\psi(x)$ for some homomorphism $\psi:(H_1(\partial X_{F_1}),TH_1(\partial X_{F_1}))\to (TH_1(\partial X_{F_1}),0).$ Now we can defer to the proof of Proposition 1.6 from \cite[338-339]{boyer1986simply} for a systematic way to modify $\Lambda$ to obtain a $\Lambda'$ that satisfies $\partial(\Lambda')_i=f_{\ast}\text{ for }i=1,2,$ completing the proof.  
\end{proof}

The following Lemma guarantees we can always trivialize $\theta(f,\Lambda).$ Let $v\in H_2(X)$ such that $Q_X(v,[F])=1,$ and consider the map, $H_2(X)\xrightarrow[]{k}H_2(X,\mathring {Y})\cong H_2(X_{F_i},\mathring{Y})\oplus H_2(\overline{\nu}F_i,\mathring{Y}).$ 

 \begin{lemma}\label{4.7}
     For any compatible pair $(f,\Lambda)$ making diagram \ref{4.2} commute, the image of $k_2(v)\in H_2(X_{F_2},\partial X_{F_2})$ in $H_1(\partial X_{F_2}),$ given by the connecting homomorphism, is annihilated by $\theta(f,\Lambda)\pmod2.$ 
   
\end{lemma}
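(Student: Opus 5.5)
We need to show that $\theta(f,\Lambda)$, evaluated on $c:=\partial_{\partial X_{F_2}}(k_2(v))\in H_1(\partial X_{F_2})$, is zero mod 2. The plan is to identify $c$ geometrically, then compare the two spin-structure bijections $\pi_\Lambda$ and $f_{\#}$ on the part of $\partial X_{F_2}$ that $c$ lives on.

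First, identify $c$. Represent $v$ by a closed surface $\Sigma$ built as in the proof of Proposition~\ref{main1}. Since $Q_X(v,[F_2])=1$, we may take $\Sigma$ to meet $F_2$ transversely in one point, up to adding pairs of canceling intersections. Then $k_2(v)$ is represented by $\Sigma\cap X_{F_2}$, and its boundary is the $S^1$-fiber $\mu_2=\{pt\}\times S^1\subset\mathring{Y}_2$. So $c=[\mu_2]$, which is also the meridian of $E_K$. Because $f=H|_{\mathring Y_1}\cup id_{E_K}$ is a bundle isomorphism on $\mathring Y_1$, it carries $\mu_1$ to $\mu_2$. By Lemma~\ref{kj} we have $k_1=\Lambda^\ast\circ k_2$. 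By compatibility of $(f,\Lambda)$, the class $c$ is then the image under both $f_\ast$ and $\partial(\Lambda)_1$ of $c_1:=\partial_{\partial X_{F_1}}(k_1(v))=[\mu_1]$.

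Second, unpack $\theta(f,\Lambda)$ on $c$. Following \cite[Section 4]{boyer1986simply}, the bijection $\pi_\Lambda$ is characterized via characteristic elements. A spin structure $\mathfrak{t}$ on $\partial M_0$ corresponds to the set of classes $w\in H_2(M_0,\partial M_0)$ whose reduction mod 2 is characteristic for $Q_{M_0}$ and whose boundary $\partial w$ records the obstruction to extending $\mathfrak t$. The bijection $\pi_\Lambda$ transports these data using $(\Lambda^\ast)^{-1}$. The value $\langle\theta(f,\Lambda),c\rangle\in\Z_2$ is then the difference of the restrictions of $\pi_\Lambda(\mathfrak t)$ and $f_\#(\mathfrak t)$ to the circle $\mu_2$, viewed as elements of $\Spin(S^1)\cong\Z_2$. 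So it suffices to check that the two structures agree on $\mu_2$.

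Third, check agreement on $\mu_2$, which I expect to be the main obstacle. The restriction to a meridian circle is detected by whether the spin structure extends across the meridian disk. This holds exactly when the framing of $\mu$ coming from $\Sigma$ has the right parity, that is, when $Q_X(v,v)$ together with the characteristic condition is $\equiv 0$. On the $f$ side, $f$ maps $\mu_1$ to $\mu_2$ preserving the fiber framing, since it is the identity on $E_K$, where the meridian framing is the one from $S^3$. On the $\Lambda$ side, $(\Lambda^\ast)^{-1}k_1(v)=k_2(v)$, and the mod-2 pairing with characteristic classes is preserved because $\Lambda$ is an isometry and $i_2\circ\Lambda=i_1$. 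Both sides therefore evaluate the same parity, namely $Q_X(v,v)+Q_X(v,w)$ for a characteristic $w$, which does not depend on $i$.

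The delicate point is making Boyer's definition of $\pi_\Lambda$ concrete enough to carry out this comparison. I would first try reducing to the case where $\mathfrak t$ extends over $M_0$, so that $w=0$, and compute directly there. The general case then follows because both bijections are equivariant under the action of $H^1(\partial X_{F_1};\Z_2)$, with $f^\ast$ and $\partial(\Lambda)$ agreeing on $[\mu]$.
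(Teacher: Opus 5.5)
\textbf{There is a genuine gap, in your third step.} Your first two steps are sound and agree with the paper's set-up. You correctly get $\partial k_2(v)=[\mu_2]$ and $\Lambda^\ast k_2=k_1$ (Lemma~\ref{kj}). You also correctly read $\langle\theta(f,\Lambda),[\mu_2]\rangle$ as the discrepancy between $f_\#(\mathfrak t)|_{\mu_2}$ and $\pi_\Lambda(\mathfrak t)|_{\mu_2}$; the paper computes the same quantity as $\langle\delta_2\theta,k_2(v)\rangle$ via \cite[Proposition 4.1]{boyer1986simply}.

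For a spin structure $\sigma$ near a circle $\mu$ and a connected surface $S$ with $\partial S=\mu$, write $o_S(\sigma)\in\Z_2$ for the obstruction to extending $\sigma$ over $S$. Your two sides give the following.
\begin{itemize}
\item The $f$ side gives $o_{D_2}(f_\#\mathfrak t)=o_{D_1}(\mathfrak t)$ for the normal disks $D_1$ and $D_2=H(D_1)$, since $H$ is a bundle map.
\item The $\Lambda$ side gives $o_{S_2}(\pi_\Lambda\mathfrak t)=\langle w_2(X_{F_1},\partial X_{F_1};\mathfrak t),\Lambda^\ast k_2(v)\rangle=o_{S_1}(\mathfrak t)$ for surfaces $S_i\subset X_{F_i}$ representing $k_i(v)$.
\end{itemize}
These are statements about two different surfaces bounding $\mu_2$, and nothing in your proposal converts one into the other.

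Your proposed common parity ``$Q_X(v,v)+Q_X(v,w)$'' does not do this. Your $w$ lives in $H_2(X_{F_i},\partial X_{F_i})$, so the expression is not even defined; and if $w$ were characteristic in $X$, it would vanish identically.

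Your fallback, choosing $\mathfrak t$ to extend over $M_0=X_{F_1}$, is impossible when $X_{F_1}$ is non-spin. That is exactly the case in which the lemma is used, in Lemma~\ref{main2}. Even in the spin case it only yields $o_{S_2}(\pi_\Lambda\mathfrak t)=0$, which still says nothing about the restriction to $\mu_2$.

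\textbf{The missing idea} is additivity of the obstruction across the closed surface $S_i\cup D_i$ representing $v$. This is what the relative Mayer--Vietoris decomposition of $w_2(X)$ in the paper encodes.
\begin{itemize}
\item Choose $S_i$ connected with $\partial S_i=\mu_i$, where $\mu_2=H(\mu_1)$, and with $[S_i\cup D_i]=v$. To build it, take a surface in the simply-connected exterior bounding $\partial D_i$, then tube it to a closed surface in $X_{F_i}$ that corrects the class by an element of $E(\alpha)=\operatorname{Im}(i_\ast)$ (Lemma~\ref{Im=E}).
\item Exactly one spin structure on $\mu_i$ extends over $S_i$, and exactly one extends over $D_i$. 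They coincide iff $\langle w_2(X),v\rangle=0$.
\item Hence $o_{S_i}(\sigma)+o_{D_i}(\sigma)\equiv\langle w_2(X),v\rangle$ for every $\sigma$ and for $i=1,2$.
\end{itemize}
Then
\[
o_{D_2}(\pi_\Lambda\mathfrak t)=\langle w_2(X),v\rangle+o_{S_2}(\pi_\Lambda\mathfrak t)=\langle w_2(X),v\rangle+o_{S_1}(\mathfrak t)=o_{D_1}(\mathfrak t)=o_{D_2}(f_\#\mathfrak t).
\]
So the two restrictions to $\mu_2$ agree and $\langle\theta(f,\Lambda),[\mu_2]\rangle=0$. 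This works for any $\mathfrak t$ and any relative Euler number $e$.

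The paper uses the special choice of an $\mathfrak s_1$ extending over $\overline\nu F_1$, so that the disk terms vanish. That choice is only available for $e$ even. The odd case is handled separately, because $[\mu_2]$ then has odd order.
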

\begin{proof}
    We extend the proof of \cite[Lemma 4.7]{Boyer1993} to the case with boundary. Let $e,$ the relative Euler number of $F,$ be odd. Recall that in the proof of Proposition \ref{main1} we constructed a surface representing $v\in H_2(X).$ Under the composition $$H_2(X)\xrightarrow[]{k_2} H_2(X_{F_2},\partial X_{F_2})\xrightarrow[]{\partial_2}H_1(\partial X_{F_2})$$ the class $k_2(v)$ is represented by $\Sigma_2 \cap X_{F_2}$, a relative surface $\Sigma_2$ whose boundary is the $S^1$-fiber of $\mathring Y_2.$ Hence $\partial_2 k_2(v) \in H_1(\partial X_{F_2})$ generates the $\Z_e$-summand. When $e$ is odd, this summand has odd order and so the class vanishes after mod-2 reduction.

Now suppose then that $e$ is even. Fix a spin structure  $\mathfrak{s}_1\in \Spin(\overline{\nu}F_1)$ and restrict to $\mathfrak{s}^{\mathring{Y}}_1,$ a spin structure over $\mathring{Y}_1.$ Note that $\s_1$ extends to $\partial X_{F_1}$ because $e$ is even and therefore, the inclusion induces an isomorphism $H_1(\mathring{Y};\Z_2)\xrightarrow[]{\cong}H_1(\partial X_{F_1}).$ Since $f$ induces to a homeomorphism on $\mathring{Y}_1,$ define $\mathfrak{s}^{\mathring{Y}}_2:=f_{\#}(\mathfrak{s}^{\mathring{Y}}_1).$  
Let $$k^{\ast}:H^2(X_{F_i},\mathring{Y};\Z_2)\oplus H^2(\overline{\nu}F_i,\mathring{Y};\Z_2)\to H^2(X;\Z_2)$$ come from the relative Mayer-Vietoris map.

We first note the equality given in \cite[Page 348]{boyer1986simply} translates to $$w_2(X)=k^{\ast}(w_2(X_{F_i},\mathring{Y};\mathfrak{s}^{\mathring{Y}}_i),w_2(\overline{\nu}F_i,\mathring{Y};\mathfrak{s}^{\mathring{Y}}_i)).$$ 

Now we follow the original proof given by \cite[Lemma 4.7]{Boyer1993},
    \begin{align*}
    \langle\theta(f,\Lambda),\partial_2(k_2(v))\rangle&\equiv\langle\delta_2\theta(f,\Lambda),k_2(v)\rangle \text{\; for $\delta_2:H^1(\partial X_{F_2};\Z_2)\to H^2(X_{F_2};\Z_2)$ } \\&\equiv 
         \langle w_2(X_{F_2},\partial X_{F_2};f_{\#}(\mathfrak{s}_1))-w_2(X_{F_2},\partial X_{F_2};\pi_{\Lambda}(\mathfrak{s}_1)),k_2(v)\rangle
        \text{ By \cite[Proposition 4.1]{boyer1986simply} }\\& \equiv \langle w_2(X_{F_2},\partial X_{F_2};\mathfrak{s}_2),k_2(v)\rangle -\langle w_2(X_{F_1},\partial X_{F_1};\mathfrak{s}_1),\Lambda^{
        \ast} \circ k_2(v)\rangle 
        \\& \equiv \langle w_2(X_{F_2},\partial X_{F_2};\mathfrak{s}_2),k_2(v)\rangle-\langle w_2(X_{F_1},\partial X_{F_1};\mathfrak{s}_1),k_1(v)\rangle \text{ by Lemma \ref{4.3}}
     \\& \equiv \langle w_2(X_{F_2},\mathring{Y_2};\s^{\mathring{Y}}_2),k_2(v) \rangle -\langle w_2(X_{F_1},\mathring{Y}_1;\s^{\mathring{Y}}_1),k_2(v) \rangle
        \\& \equiv \langle \omega_2(X),v\rangle - \langle \omega_2(X),v\rangle\text{ by definition of $k^{\ast}$}
        \\& \equiv  0 \;(mod\; 2).    \end{align*}
Where the second to last equality follows from $w_2(\overline{\nu}F_2,\mathring{Y};\mathfrak{s}_2)\in H^2(\overline{\nu}F,\mathring{Y};\Z_2)$ being trivial since $\mathfrak{s}_2$ extends as spin structure over $\overline{\nu} F_2.$ 
\end{proof}
In a similar fashion to \cite[Page 45]{Boyer1993}, we show that when the exteriors are non-spin we can change $\Lambda$ in a systematic way.

Recall we identify the dual space of the second homology and the 2nd relative homology group via the Poincaré duality and evaluation isomorphisms: $$H_2(X_F,\partial X_F)\cong H^2(X_F)\cong \Hom_{\Z}(H_2(X_F);\Z).$$
\begin{lemma}\label{main2} Assume $X_{F_i}$ is non-spin. Fix a compatible pair $(f,\Lambda)$ making diagram \ref{4.2} commute. There exists an isometry  $\Lambda':H_2(X_{F_1})\to H_2(X_{F_2})$ such that $(f,\Lambda')$ remains compatible, makes diagram \ref{4.2} commute, and is realized by a homeomorphism $F:X_{F_1}\to X_{F_2}.$
\end{lemma}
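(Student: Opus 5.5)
We will follow Boyer's modification argument \cite[Page 45]{Boyer1993}: keep $f$ fixed and correct $\Lambda$ by composing with a reflection-type isometry of $H_2(X_{F_2})$ that acts trivially on everything seen by diagram \ref{4.2} and by the boundary. The effect is to change the obstruction $\theta(f,\Lambda)\in I^1(\partial X_{F_2})$ by a prescribed class; once $\theta$ vanishes, \cite[Theorem 0.7]{boyer1986simply} realizes the pair by a homeomorphism. The Kirby--Siebenmann invariants of $X_{F_1}$ and $X_{F_2}$ agree, since each equals $KS(X)$ by additivity and $\overline{\nu}F_i$ is a trivial bundle over a surface. So the hypotheses of \cite[Theorem 0.7]{boyer1986simply} and of \cite[Proposition 0.8]{boyer1986simply} hold. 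In fact part (1) of the latter already produces some $\Lambda'$ with $(f,\Lambda')$ compatible and realized. The real content is therefore to arrange that $\Lambda'$ still makes diagram \ref{4.2} commute, so we open up Boyer's construction rather than quoting it.

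\textbf{Step 1: locate the obstruction.} By Lemma \ref{4.7}, $\theta(f,\Lambda)$ annihilates $\partial_2 k_2(v)$, the class of the $S^1$-fiber. Using Proposition \ref{BXF}, we have $H_1(\partial X_{F_2})\cong \Z^{2g}\oplus\Z_e$, with the fiber generating $\Z_e$. Hence $\theta(f,\Lambda)$ is determined by its values on the free part. We want to see that this part is detected by classes coming from the $E$-summand through $k_2$ or from $H_2(\partial X_{F_2})$. Concretely, $\theta$ should be the coboundary image of a class $\delta\theta\in H^2(X_{F_2};\Z_2)$ pairing trivially with $k_2(v)$.

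\textbf{Step 2: build the correcting isometry.} Since $X_{F_2}$ is non-spin, $Q_{X_{F_2}}$ restricted to $E$ is odd: by Proposition \ref{QX} the radical contributes nothing to $w_2$. Choose $w\in H_2(X_{F_2})$ with $Q_{X_{F_2}}(w,w)$ odd, lifting a characteristic-type element dual to the required change. Following Boyer, we then define a transvection or reflection
\[
\rho(x)=x+\epsilon\,Q_{X_{F_2}}(x,c)\,c,
\]
or a suitable combination of such maps, where $c$ is built from $w$ and a class $r\in \operatorname{Im}(\partial_{X_{F_2}})$ in the radical. Such a $\rho$ is an isometry. Because it only adds radical classes $\operatorname{Im}(\partial_{X_{F_2}})\subset \operatorname{Im}(i_{\partial X_{F_2}})$, it satisfies:
\begin{itemize}
\item $\rho$ is the identity modulo $\operatorname{Im}(\partial_{X_{F_2}})$, so $i_2\circ\rho=i_2$;
\item $\rho$ fixes $\operatorname{Im}(\partial_{X_{F_2}})$ pointwise, so the left square of \ref{4.2} still commutes;
\item the induced maps on $H_*(\partial X_{F_2})$ are the identity, so $(f,\rho\circ\Lambda)$ remains compatible.
\end{itemize}
We then verify, using \cite[Proposition 4.1]{boyer1986simply} as in the computation in Lemma \ref{4.7}, that $\theta(f,\rho\circ\Lambda)=\theta(f,\Lambda)+\bar r$, where $\bar r$ is the mod 2 reduction of the dual of $r$. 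Choosing $r$ appropriately then kills $\theta$.

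\textbf{Main obstacle.} The hardest step is showing that the achievable changes $\bar r$ exhaust the possible values of $\theta$. The radical is only $\Z^{2g}$, coming from $H_2(\mathring Y)$, while $\theta$ lives in $I^1(\partial X_{F_2})$, which also carries the $\Z_e$-direction. This is exactly where Lemma \ref{4.7} is needed, since it removes the fiber direction. The case $e=0$ needs separate care, because the extra $\Z$ in $H_2(\partial X_F)$ corresponds to $\tilde\alpha$ in Lemma \ref{QXW}. There we would additionally use $Q_X|_W$ nondegenerate and odd to realize the remaining correction, by a transvection supported on $W\oplus\langle\tilde\alpha\rangle$ fixing $\tilde\alpha$.
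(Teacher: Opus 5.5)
Your setup matches the paper: you check that the Kirby--Siebenmann invariants agree, and you note that Proposition 0.8(1) alone does not preserve diagram (4.2). Your strategy does too. The paper sets $\Lambda'=\Lambda+i_{\partial X_{F_2}}\circ\psi\circ\hat Q_{X_{F_1}}$, with $\psi$ of rank one and image generated by $\beta\in H_2(\partial X_{F_2})$, so that $\theta$ shifts by the reduction of $PD^{-1}(\beta)$. This is exactly $\rho\circ\Lambda$ for a $\rho$ that adds radical classes.

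However, the correcting map you actually display does not have the properties you list. A nontrivial isometry of the form $x\mapsto x+\epsilon\,Q(x,c)\,c$ forces $Q(c,c)\neq0$ and $\epsilon=-2/Q(c,c)$, i.e.\ it is a reflection. Then $\rho(c)=-c$, and $i_2(c)\neq0$ because $Q_X(i_2c,i_2c)=Q(c,c)\neq0$. So $i_2\circ\rho\neq i_2$, and the right square of (4.2) fails.

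The natural combination, reflections in $c$ and $c+r$ with $r$ radical, does stay in the radical, but it does not help. It equals $x\mapsto x\pm2Q(x,c)\,r$ when $Q(c,c)=\pm1$, and $x\mapsto x\pm Q(x,c)\,r$ when $Q(c,c)=\pm2$. Since $Q(\kappa,c)\equiv Q(c,c)\pmod 2$ for characteristic $\kappa$, characteristic elements move by even multiples of $r$, so $\theta$ does not change.

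What you need is the shear $\rho(x)=x+Q_{X_{F_2}}(x,w)\,r$, with $r\in\operatorname{Im}(\partial_{X_{F_2}})$ and $Q(w,w)$ odd; this is not a reflection.
\begin{itemize}
\item It is an isometry because $r$ is radical, and its inverse is $x\mapsto x-Q(x,w)\,r$.
\item It satisfies your three bullets.
\item Since $\Lambda$ carries characteristic elements to characteristic elements, $Q(\Lambda\kappa,w)\equiv Q(w,w)\equiv1$, so the characteristic element moves by $r$ mod $2$. This is exactly where the non-spin hypothesis enters.
\end{itemize}
With $r=i_{\partial X_{F_2}}(\beta)$ one gets $\rho\circ\Lambda=\Lambda+i_{\partial X_{F_2}}\circ\psi\circ\hat Q_{X_{F_1}}$ for $\psi(z)=z(\Lambda^{-1}w)\,\beta$, which is the paper's formula.

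Your $e=0$ paragraph errs in the other direction: no extra correction is needed, and the one you propose is harmful. As $r$ ranges over $\operatorname{Im}(\partial_{X_{F_2}})$, $\beta$ ranges over $i_{\mathring Y_2}(H_2(\mathring Y_2))=\{\beta\in H_2(\partial X_{F_2}):\beta\cdot\mu=0\}$. Since $\mu$ is primitive when $e=0$, every class of $I^1(\partial X_{F_2})$ vanishing on $\mu$ is the reduction of an integral class vanishing on $\mu$. Lemma~\ref{4.7} says $\theta(f,\Lambda)$ is such a class. This is precisely the paper's requirement $Q_{\partial X_{F_2}}(\beta,[\mu])=0$.

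By contrast, a transvection supported on $W\oplus\langle\tilde\alpha\rangle$ acts nontrivially on $E$. It therefore destroys $i_2\circ\Lambda'=i_1$, the very property the lemma must preserve. Likewise, the additional radical class present when $e=0$ maps to $\pm\tilde\alpha$ under $i_2$, so it cannot be used either.

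For $e\neq0$ the obstacle you describe does not arise. $I^1$ consists of reductions of integral classes, which vanish on the torsion $\Z_e$. Also, $i_{\mathring Y_2}$ is onto $H_2(\partial X_{F_2})$ by the proof of Lemma~\ref{surj}. Hence every value of $\theta$ is reachable.
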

\begin{proof}
Define $$\Lambda':=\Lambda+i_{\partial X_{F_2}}\circ \psi \circ \hat{Q}_{X_{F_1}}:H_2(X_{F_1})\to H_2(X_{F_2}).$$ As in \cite[Page 351]{boyer1986simply}, we note that for $\psi:H_2(X_{F_1},\partial X_{F_1})\to H_2(\partial X_{F_2})$ an arbitrary homomorphism $(f,\Lambda')$ is a compatible pair. We will choose $\psi$ to have image generated by a single element, $\beta\in H_2(\partial X_{F_2})$ to enforce the equality $\theta(f,\Lambda')=\theta(f,\Lambda)+PD^{-1}(\beta)\;(mod\;2)$ given in \cite[Page 351]{boyer1986simply}. We first check $\Lambda'$ makes diagram 4.2 commute. It suffices to show the following segment of the diagram \ref{4.2} commutes: 
\[\begin{tikzcd}[ampersand replacement=\&,cramped]
	{H_2(\partial X_{F_1})} \&\& {H_2(X_{F_1})} \&\& {E([F_1])} \\
	\\
	{H_2(\partial X_{F_2})} \&\& {H_2(X_{F_2})} \&\& {E([F_2])}
	\arrow["{(i_{\partial X_{F_1}})_{\ast}}", from=1-1, to=1-3]
	\arrow["{f_{\ast}}", from=1-1, to=3-1]
	\arrow["{i_1}", from=1-3, to=1-5]
	\arrow["{\Lambda '}", from=1-3, to=3-3]
	\arrow["{id_E}", from=1-5, to=3-5]
	\arrow["{(i_{\partial X_{F_2}})_{\ast}}", from=3-1, to=3-3]
	\arrow["{i_2}", from=3-3, to=3-5]
\end{tikzcd}\]
The left square commutes by the following: 
$$\Lambda'(i_{\partial X_{F_1}}(y))=\Lambda(i_{\partial X_{F_1}}(y))+i_{\partial X_{F_2}}\circ \psi \circ \hat{Q}_{X_{F_1}}(i_{\partial X_{F_1}}(y))=\Lambda(i_{\partial X_{F_1}}(y))=i_{\partial X_{F_1}}(f_{\ast}(y)).$$ The second to last equality holds because, since $\hat{Q}_{X_{F_1}}$ is the adjoint of $H_2(X_{F_1}),$ any element in $Im(i_{\partial X_{F_1}})$ evaluates to zero. Notice this did not depend on our choice of $\psi.$

The right square requires more care. When $e\neq 0$ we choose $\beta\in im(\psi)$ whose Poincaré dual reduces to $\theta(f,\Lambda)$ mod 2 to ensure $\theta(f,\Lambda')=0.$ By Lemma \ref{surj}, when $e\neq 0,$  $i_2\;\circ\; i_{\partial X_{F_2}}\equiv0,$ therefore, $i_2\circ\Lambda'=i_2(\Lambda+i_{\partial X_{F_2}}\circ \psi \circ \hat{Q}_{X_{F_1}})=i_2\circ\Lambda=i_1$ as desired. 

When $e=0,$ $i_2\;\circ\; i_{\partial X_{F_2}}$ need not be the zero map. We again pick a $\beta$ whose dual reduces to $\theta(f,\Lambda)$ mod 2, but additionally we require $Q_{\partial X_{F_2}}(\beta,[\mu])=0$ for $[\mu]=\partial_2(k_2(v))\in H_1(\partial X_{F_2}).$ 
Since the final $\Z$-summand of $H_2(\mathring{Y}) \cong H_2(\partial X_F) \cong \Z^{2g} \oplus \Z$ is generated by the Poincaré dual of $\mu$, the equality $Q_{\partial X_{F_2}}(\beta,[\mu])=0$ together with the diagram from $(1)$ ensure that $i_{\partial X_{F_2}}(\beta) \in \im(i_{\partial X_{F_2}})$ and thus $i_2(i_{\partial X_{F_2}}(\beta))=0$. Therefore, again  we have $i_2\circ\Lambda'=i_2\circ\Lambda=i_1$ completing the proof. 
\end{proof}
\begin{customlemma}{Proposition 4.2}
    If $F:X_{F_1}\to X_{F_2}$ restricted to the boundary agrees with $f:=H|_{\mathring{Y}}\cup id_{E_K}$ and $(f,\Lambda):=(F|_{\partial},F_{\ast})$ makes diagram \ref{4.2} commute then $\hat{F}=F\cup H:X\to X$ is isotopic to the identity.  
\end{customlemma}
\begin{proof}
   Consider the short exact sequence given by evaluation on $[F]$ from Lemma \ref{Im=E}:
   \[\begin{tikzcd}[ampersand replacement=\&,cramped]
	0 \& {E([F_1])} \&\& {H_2(X)} \&\& {\Z} \& 0 \\
	\\
	\\
	0 \& {(E[F_2])} \&\& {H_2(X)} \&\& {\Z} \& 0
	\arrow[from=1-1, to=1-2]
	\arrow[from=1-2, to=1-4]
	\arrow["{=}", from=1-2, to=4-2]
	\arrow["{Q_X([F_1],-)}", from=1-4, to=1-6]
	\arrow["{\hat{F}_{\ast}}", from=1-4, to=4-4]
	\arrow[from=1-6, to=1-7]
	\arrow["=", from=1-6, to=4-6]
	\arrow[from=4-1, to=4-2]
	\arrow[from=4-2, to=4-4]
	\arrow["{Q_X([F_2],-)}", from=4-4, to=4-6]
	\arrow[from=4-6, to=4-7]
\end{tikzcd}\]
For $\hat{F}$ to be isotopic to the identity it is sufficient to prove that $\hat{F}_{\ast}|_{H_2(X)}$ is the identity by \cite[Corollary C]{orson2022mapping}.
Consider the inclusion map $(i_j)_{\ast}:H_2(X_{F_j})\to H_2(X).$   
For $x \in H_2(X_{F_1})$, $\hat{F}_{\ast}(i_1(x))=i_{2}(F_{\ast}(x))$ since $\hat F$ extends $F$; this equals $i_2(\Lambda(x)) = i_1(x)$ by the commutativity of diagram \ref{4.2}.

Fix $\alpha=[F_1]=[F_2].$ For $x\in H_2(X)$ observe that $x-\hat{F}_{\ast}(x)$ is in $E(\alpha)$ by the following computation: 
\begin{align*}
    Q_X(x-\hat{F}_{\ast}(x),\alpha) &= Q_X(x,\alpha)-Q_X(\hat{F}_{\ast}(x),\alpha)
    \\&= Q_X(x,\alpha)-Q_X(\hat{F}_{\ast}(x),\hat{F}_{\ast}(\alpha)) \text{     since $\hat{F}_{\ast}(\alpha)=\alpha$}
    \\&=Q_X(x,\alpha)-Q_X(x,\alpha) \text{     since $\hat{F}_{\ast}$ is an isometry}
    \\&=0 .
\end{align*}
Hence $Q_X,\;x-\hat{F}_{\ast}(x)\in E(\alpha).$
Assume first that $F_i$ have nonzero relative Euler number. Then $Q_X|_{E(\alpha)}$ is non-degenerate by Lemma \ref{QXE}. Let $e\in E(\alpha),$ by the diagram above we know 
\begin{align*}  
    Q_X(x-\hat{F}_{\ast}(x),e) &= Q_X(x,e)-Q_X(\hat{F}_{\ast}(x),e)
    \\&= Q_X(x,e)-Q_X(\hat{F}_{\ast}(x),\hat{F}_{\ast}(e)) \text{ since LHS commutes}
    \\&=Q_X(x,e)-Q_X(x,e)
    \\&=0. 
\end{align*}

By non-degeneracy, $x-\hat{F}(x)=0 $ which implies $ \hat{F}(x)=x.$ Since $x\in H_2(X)$ was arbitrary when the relative Euler number of $F$ is nonzero, $\hat{F}$ induces the identity on $H_2(X)$ and $\hat{F}|_{\partial X}= id_{\partial X}$ therefore, by \cite[Corollary C]{orson2022mapping} $\hat{F}$ is isotopic to the identity rel. boundary and we conclude $F_1$ is topologically isotopic rel. boundary to $F_2.$

Now suppose the relative Euler number of $F_1,F_2$ is zero, recall there exists an element $v\in H_2(X)$ such that $Q_X(v,\alpha)=1.$ Assume that $\hat{F}_{\ast}(v)=v+m\tilde{\alpha}$ where $m\in \Z,\text{and }\tilde{\alpha}:=(\tilde{i}_{\ast})^{-1}(\alpha)$ for $\tilde{i}:H_2(X)\to H_2(X,\partial X).$ Then $v\cdot v=\hat{F}_{\ast}(v)\cdot \hat{F}_{\ast}(v)=v\cdot v +2m(v\cdot\tilde\alpha)+\tilde\alpha\cdot \tilde{\alpha}$ where $v\cdot \tilde\alpha=v\cdot \alpha=1$ and $\tilde\alpha\cdot \tilde \alpha=\tilde\alpha\cdot \alpha=e(F_i)=0$ by assumption. This forces $m=0$ hence $\hat{F}_{\ast}(v)=v.$

We now verify that this assumption indeed holds. By Lemma \ref{QXW} implies $\hat{F}_{\ast}(v)=v+m\tilde{\alpha}.$ Since $H_2(X)\cong \langle v\rangle \oplus E(\alpha),$ we can assume $\hat{F}_{\ast}(v)=qv+y$ for $q\in \Z$ and $y\in E(\alpha).$ Then the following equalities hold:
\begin{align*}
v\cdot \tilde{\alpha}
&= \hat{F}_{\ast}(v)\cdot \hat{F}_{\ast}(\tilde{\alpha})
&& \text{since $\hat{F}_{\ast}$ is an isometry} \\
&= \hat{F}_{\ast}(v)\cdot \tilde{\alpha}
&& \text{since $\hat{F}_{\ast}(\tilde{\alpha})
   = \hat{F}_{\ast}((i_{\ast})^{-1}[F_1])
   = (i_{\ast})^{-1}(\hat{F}_{\ast}([F_1]))
   = (i_{\ast})^{-1}[F_2]=\tilde{\alpha}$} \\
&= (qv+y)\cdot \tilde{\alpha} \\
&= q\,v\cdot \tilde{\alpha} + y\cdot \tilde{\alpha} \\
&= q\,v\cdot \tilde{\alpha} &&\text{ since by assumption $Q_X(y,\tilde{\alpha})=0.$}
\end{align*}
 This implies that $q=1,$ so $\hat{F}_{\ast}(v)=v+y.$ For $w\in E(\alpha)$ consider the following equalities: $$ v\cdot w= \hat{F}_{\ast}(v)\cdot\hat{F}_{\ast}(w)=(v+y)\cdot w=v\cdot w+y\cdot w.$$

Since $w\in E(\alpha)$ there exists $x\in H_2(X_{F_1})$ such that $w=i_1(x),$ and by the equality $\hat{F}_{\ast}(i_1(x))=i_{2}(\Lambda (x))=i_1(x)$ the second equality holds. 

We see that $y\cdot w=0$ for all $w\in W.$ Now to conclude the proof notice for $W$ given in Lemma \ref{QXW} by the splitting $E(\alpha)\cong  \langle \tilde{\alpha}\rangle \oplus W,$ $y\in E(\alpha)$ implies $y=y_w+m\tilde{\alpha}$ for $m\in\Z,$ and $y_w\in W.$ Since $y\cdot w=0$ for all $w\in W$ we see $0=y\cdot w= (y_w+m\tilde{\alpha})\cdot w=y_w\cdot w+m\tilde{\alpha}\cdot w$ where $m\tilde{\alpha}\cdot w=0 $ because $W\subset E(\alpha).$ Therefore $y_w\cdot w=0,$ so by the non-degeneracy of $Q_X|_W,\; y_w=0\in W.$ Therefore, $\hat{F}_{\ast}(v)=v+m\tilde{\alpha}$ as required. 

As $v$ and $E(\alpha)$ span $H_2(X),$ we have proved that $\hat{F}$ induces the identity on $H_2(X)$ and $\hat{F}|_{\partial X}= id_{\partial X}.$ Therefore, by \cite[Corollary C]{orson2022mapping} $\hat{F}$ is isotopic to the identity rel. boundary and we conclude $F_1$ is isotopic rel. boundary to $F_2.$ 
\end{proof}

\bibliographystyle{amsalpha}
\bibliography{main}
\end{document}